\title{On the algebraic $K$-theory of Hecke algebras}
       \author{Bartels, A.}
       \address{Westf\"alische Wilhelms-Universit\"at M\"unster\\
               Mathematicians Institut\\
               Einsteinium.~62,
               D-48149 M\"unster, Germany}
        \email{bartelsa@math.uni-muenster.de}
        \urladdr{http://www.math.uni-muenster.de/u/bartelsa}
        \author{L\"uck, W.}
        \address{Mathematicians Institut der Universit\"at Bonn\\
                Endenicher Allee 60\\
                53115 Bonn, Germany}
         \email{wolfgang.lueck@him.uni-bonn.de}
          \urladdr{http://www.him.uni-bonn.de/lueck}
         \date{April, 2022}
         \keywords{Hecke algebras , algebraic $K$-theory, Wang sequence.}
\subjclass[2020]{20C08,19D35,19D50}
  \DeclareMathAlphabet{\matheurm}{U}{eur}{m}{n}
\DeclareMathAlphabet{\matheurm}{U}{eur}{m}{n}
\newcommand{\MODcat}[1]{#1\text{-}\matheurm{MOD}}
\newcommand{\SubGF}[2]{\matheurm{Sub}_{#2}(#1)}
\DeclareMathOperator{\aut}{aut}
\DeclareMathOperator{\cent}{cent}
\DeclareMathOperator{\cok}{cok}
\DeclareMathOperator{\colim}{colim}
\DeclareMathOperator{\FP}{FP}
\DeclareMathOperator{\fgf}{fgf}
\DeclareMathOperator{\fgp}{fgp}
\DeclareMathOperator{\hocolim}{hocolim}
\DeclareMathOperator{\id}{id}
\DeclareMathOperator{\im}{im}
\DeclareMathOperator{\Idem}{Idem}
\DeclareMathOperator{\mor}{mor}
\DeclareMathOperator{\pr}{pr}
\DeclareMathOperator{\supp}{supp}
\newcommand{\COP}{{\calc\hspace{-1pt}\mathrm{op}}}
\newcommand{\CVCYC}{{\calc\hspace{-1pt}\mathrm{vcy}}}
\newcommand{\FIN}{{{\mathcal F}\mathrm{in}}}
  \newcommand{\IC}{\mathbb{C}}
  \newcommand{\IN}{\mathbb{N}}
  \newcommand{\IQ}{\mathbb{Q}}
  \newcommand{\IR}{\mathbb{R}}
  \newcommand{\IZ}{\mathbb{Z}}
  \newcommand{\cala}{\mathcal{A}}
  \newcommand{\calb}{\mathcal{B}}
  \newcommand{\calc}{\mathcal{C}}
  \newcommand{\calh}{\mathcal{H}}
  \newcommand{\calk}{\mathcal{K}}
  \newcommand{\calp}{\mathcal{P}}
  \newcommand{\bfa}{\mathbf{a}}
  \newcommand{\bfT}{\mathbf{T}}
 \newcommand{\bfKinfty}{\mathbf{K}^{\infty}}
 \newcommand{\bfNKinfty}{\mathbf{N}\mathbf{K}^{\infty}}
\newcommand{\EGF}[2]{E_{#2}(#1)}
\newcommand{\squarematrix}[4]
{\left( \begin{array}{cc} #1 & #2 \\ #3 &
#4
\end{array} \right)
}
\newcounter{commentcounter}
\theoremstyle{plain}
\newtheorem{theorem}{Theorem}[section]
\newtheorem{lemma}[theorem]{Lemma}
\newtheorem{condition}[theorem]{Condition}
\newtheorem*{theorem*}{Theorem}
\newtheorem*{theoremA*}{Theorem A}
\newtheorem*{theoremB*}{Theorem B}
\theoremstyle{definition}
\newtheorem{definition}[theorem]{Definition}
\newtheorem{remark}[theorem]{Remark}
\newtheorem*{definition*}{Definition}
\theoremstyle{remark}
\let\c@equation=\c@theorem\makeatother
\theoremstyle{definition}
\newcounter{othercommentcounter}
\newcommand{\version}[1]              
{\begin{center} last edited on #1\\
last compiled on \today\\
name of tex-file: \jobname
\end{center}
}
\begin{document}

\begin{abstract}
  Consider a totally disconnected group $G$, which is covirtually cyclic, i.e., contains a
  normal compact open subgroup $L$ such that $G/L$ is infinite cyclic. We establish a Wang
  sequence, which computes the algebraic $K$-groups of the Hecke algebra of $G$ in terms
  of the one of $L$, and show that all negative $K$-groups vanish.  This confirms the
  $K$-theoretic Farrell-Jones Conjecture for the Hecke algebra of $G$ in this special
  case.  Our ultimate long term goal is to prove it for any closed subgroup of any
  reductive $p$-adic group. The results of this paper will play a role in the final proof.
\end{abstract}

     \maketitle

     \newlength{\origlabelwidth}\setlength\origlabelwidth\labelwidth


     \typeout{------------------- Introduction -----------------}
     \section{Introduction}\label{sec:introduction}

     Let $G$ be a td-group, i.e., a locally compact second countable totally disconnected
     topological Hausdorff group.  Our ultimate goal is to compute the algebraic
     $K$-groups and in particular the projective class group of the Hecke algebra
     $\calh(G)$ of $G$, which is defined in terms of locally constant functions with
     compact support from $G$ to the real or complex numbers and the convolution product.
     We want to show that the canonical map
     \begin{equation}
       \colim_{K \in \SubGF{G}{\COP}} K_0(\calh(K)) \xrightarrow{\cong} K_0(\calh(G))
       \label{colim_K_K_0(calh(K)_is_K_0(calh(G))}
     \end{equation}
     is bijective. Here $\SubGF{G}{\COP}$ is the following category. Objects are compact
     open subgroups $K$ of $G$, a morphism $f \colon K \to K'$ is a group homomorphism,
     for which there exists $g \in G$ satisfying $f(k) = gkg^{-1}$ for all $k \in K$, and
     we identify two such group homomorphisms $f \colon K \to K'$ and
     $f' \colon K \to K'$, if they differ by an inner automorphism of $K'$.  In particular
     the obvious map
     \begin{equation}
       \bigoplus_{K} K_0(\calh(K)) \to K_0(\calh(G))
       \label{surjectivity_of_induction_on_K_0}
     \end{equation}
     is surjective, where $K$ runs through the compact open subgroups of $G$.

     Dat~\cite[Theorem~1.6 and Corollary~4.22]{Dat(2000)} showed following ideas of
     Bernstein that the map~\eqref{surjectivity_of_induction_on_K_0} is rationally
     surjective for a reductive $p$-adic group $G$. He used for the proof the
     Hattori-Stallings rank and input from the representation theory of reductive
     $p$-adic groups.  Dat also asked the question, whether the
     map~\eqref{surjectivity_of_induction_on_K_0} is surjective without rationalizing, see
     the sentence after~\cite[Proposition~1.10]{Dat(2003)} and the formulation of the
     weaker conjecture~\cite[Conjecture~1.11]{Dat(2003)}.

     The projective class group $K_0(\calh(G))$ is interesting for the study of smooth
     $G$-representations, since  every finitely generated smooth
     $G$-representation has a finite projective resolution and hence define elements in it, see for
     instance~\cite[Theorem~29 on page~97 and Proposition~32 on page~60]{Bernstein(1992)},%
~\cite{Schneider-Stuhler(1991)},~\cite{Schneider-Stuhler(1993)},~\cite{Schneider-Stuhler(1997)},~\cite{Vigneras(1990)}.

     If $G$ is discrete, the family $\COP$ of compact open subgroups reduces to the family
     $\FIN$ of finite subgroups of $G$ and the bijectivity of the
     map~\eqref{colim_K_K_0(calh(K)_is_K_0(calh(G))} reduces to the bijectivity of the
     canonical map
     \begin{equation}
       \colim_{F \in \SubGF{G}{\FIN}} K_0(\IC F) \xrightarrow{\cong} K_0(\IC G),
       \label{colim_K_K_0(CF)_is_K_0(CG)}
     \end{equation}
     which follows from the $K$-theoretic Farrell-Jones Conjecture for $\IC G$.

     Our ultimate and long term goal is to the prove the version of the
     \emph{$K$-theoretic Farrell-Jones Conjecture for the Hecke algebra of td-groups} for
     any closed subgroup $G$ of any reductive $p$-adic group. It predicts the bijectivity
     of the assembly map
     \begin{equation}
       H_n^G(\EGF{G}{\COP};\bfKinfty_{\calh}) \xrightarrow{\cong} H_n^G(G/G;\bfKinfty_{\calh}) = K_n(\calh(G))
       \label{FJ_assembly_standard_Hecke}
     \end{equation}
     for every $n \in \IZ$. Here the source is a smooth $G$-homology theory, which digests
     smooth $G$-$CW$-complexes and satisfies
     $H^G_n(G/H;\bfKinfty_{\calh}) = K_n(\calh(H))$ for open subgroups $H \subseteq G$,
     and the smooth $G$-$CW$-complex $\EGF{G}{\COP}$ is a model for the classifying space
     of the family of compact open subgroups, or, equivalently the classifying space for
     smooth proper $G$-actions in the realm of $G$-$CW$-complexes. This map will be
     constructed in~\cite{Bartels-Lueck(2020foundations)}, where a formulation of the
     $K$-theoretic Farrell-Jones Conjecture is given for Hecke categories, which
     generalize the notion of a Hecke algebra.

     We will not prove the $K$-theoretic Farrell-Jones Conjecture for Hecke categories in
     this paper. At least we present a direct proof of it in the special case that $G$ is
     covirtually infinite cyclic, i.e., $G$ contains a normal compact open subgroup $L$
     such that the quotient $G/L$ is the discrete group $\IZ$. Then the conjecture boils
     down to Theorem~\ref{the:Wang_sequence} which 
     says that there is a Wang sequence, infinite to the left,
     \begin{multline*}
       \cdots \xrightarrow{K_2(i)} K_2(\calh(G)) \xrightarrow{\partial_2} K_1(\calh(L))
       \xrightarrow{\id - K_1(\phi)} K_1(\calh(L))
       \\
       \xrightarrow{K_1(i)} K_1(\calh(G)) \xrightarrow{\partial_1} K_0(\calh(L))
       \xrightarrow{\id - K_0(\phi)} K_0(\calh(L))
       \\
       \xrightarrow{K_0(i)} K_0(\calh(G)) \to 0,
     \end{multline*}
     where $\phi \colon L \to L$ is the automorphism given by conjugation with some
     preimage of the generator of the infinite cyclic group $G/L$ under the projection
     $G \to G/L$ and $i \colon L \to G$ is the inclusion, and that we have
     \[
       K_n(\calh(G)) = 0 \quad \text{for} \;n \le -1.
     \]
     So in this paper we can confirm the Farrell-Jones Conjecture for covirtually infinite
     cyclic td-groups.  One may say that this paper plays the same role for the
     Farrell-Jones Conjecture for Hecke algebras as the papers by
     Farrell-Hsiang~\cite{Farrell-Hsiang(1970)} and 
     Pimsner-Voiculescu~\cite{Pimsner-Voiculescu(1982)} did for the Farrell-Jones Conjecture for discrete groups
     and the Baum-Connes  Conjecture. To our knowledge this paper presents
     the first instance of a version of the Farrell-Jones Conjecture for non-discrete groups.

     One application of this paper will be that the bijectivity
     of~\eqref{FJ_assembly_standard_Hecke} implies the bijectivity
     of~\eqref{colim_K_K_0(calh(K)_is_K_0(calh(G))}. Moreover, Theorem~\ref{the:regular_coherence_of_the_Hecke_algebra}
     and Theorem~\ref{the:Q'_to_Q-faithfully_flat}  will be key ingredients
     in the part of the forthcoming proof of the Farrell-Jones Conjecture, where we will
     reduce the family $\CVCYC$ of (not necessarily open) covirtually cyclic subgroups to
     the family $\COP$.

     We mention that we will look at more complicated Hecke algebras than the standard
     ones. We will allow other rings than $\IR$ or $\IC$. Moreover, we take a $G$-action on $R$ by
     ring automorphisms and a normal character, which is an obvious generalization of a
     central character, into account. In the sequel papers we will replace the
     Hecke algebras by the more general notion of a Hecke category, since allowing more general coefficients
     will ensure the desirable inheritance to closed subgroups of the Farrell-Jones Conjecture.
      This is interesting in the case of reductive $p$-adic groups, since important
     subgroups such as the Borel subgroup are in general not open.

     One ingredient for the main results of this paper is the Bass-Heller-Swan
     decompositions for additive categories and the presentation of criteria for the
     vanishing of the Nil-term,
     see~Section~\ref{sec:A_review_of_the_Bass-Heller-Swan_decomposition_for_unital_additive_categories},~%
     and~\cite{Bartels-Lueck(2020additive),Lueck-Steimle(2016BHS)}. The second is the
     analysis of the filtration of the Hecke algebra of a compact td-groups in terms of
     approximate units,  see
     Section~\ref{sec:Hecke_algebras_over_compact_td-groups_and_crossed_product_rings}.


     \subsection{Conventions and notations}\label{subsec:Conventions_and_notations}

     \begin{itemize}

     \item A td-group is a locally compact second countable totally disconnected
       topological Hausdorff group;

     \item A subgroup is always assumed to be closed;

     \item A group homomorphism has closed image and is an identification onto it;

     \item We denote by $R$ an associative ring, which is not necessarily commutative and
       not necessarily has a unit.  If a ring has a unit, it is called a \emph{unital
         ring}. In almost all cases we will require for a unital ring $R$ that
       $\IQ \subseteq R$ holds, i.e., for every integer $n \ge 1$ the element
       $n \cdot 1 = 1 + 1+ \cdots + 1$ has a multiplicative inverse in $R$;

     \item In a ring the unit is denoted by $1$. In a group the unit is denoted by $e$;

     \item For an epimorphism $p \colon S \to S'$ of sets, a \emph{transversal} $T$ is a
       subset $T \subseteq S$ such that the restriction of $p$ to $T$ yields a bijection
       $p|_T \colon T \xrightarrow{\cong} S'$. If $S$ is a group, we always assume that
       the unit is in $T$;

     \end{itemize}


     \subsection{Acknowledgments}\label{subsec:Acknowledgements}

      The paper is funded by the ERC Advanced Grant “KL2MG-interactions” (no. 662400) of
     the second author granted by the European Research Council, by the Deutsche
     Forschungsgemeinschaft (DFG, German Research Foundation) under Germany’s Excellence
     Strategy – GZ 2047/1, Projekt-ID 390685813, Hausdorff Center for Mathematics at
     Bonn, and by the Deutsche Forschungsgemeinschaft (DFG, German Research Foundation) –
     Project-ID 427320536 – SFB 1442, as well as under Germany’s Excellence Strategy EXC
     2044 390685587, Mathematics Münster: Dynamics–Geometry–Structure.


     \subsection{Laudatio for Dr. Catriona Byrne by the second
       author}\label{subsec:Laudatio}

     When I was invited to contribute to the Festschrift in honor of Springer’s Editorial
     Director Dr. Catriona Byrne, I was flattered and did not hesitate to accept the
     invitation.  I know her, since I submitted my first book with the title
     ``Transformation groups and algebraic $K$-theory'' to the \emph{Lecture Notes in
       Mathematics} in 1989. She has always been a very reliable and competent partner for
     all of my book projects.  She is one of the few persons working for a publishing
     house, which do manage to keep the right balance between the mercantile aspects and
     the interest of the mathematical community. One always has the impression that she
     does care about the contents and the authors of any submission. In particular we did
     get close, when Andrew Ranicki and I and Springer had a very hard time to deal with
     the problems caused by the managing editor of the journal \emph{$K$-theory} in 2007,
     see ``Pers\"onliches Protokoll zur Zeitschrift K-Theory'' in Mitteilungen der
     Deutschen Mathematiker-Vereinigung, Band~15 Heft~3, 2007.

     I wish Catriona all the best for the many years to come.


     \typeout{------------------- Section 2:  Hecke algebras -----------------}

     \section{Hecke algebras}\label{sec:Hecke_algebras}

     In this section we slightly generalize the notions of a Hecke algebra by implementing a normal character.


     \subsection{Normal characters}\label{subsec:normal_characters}

     Let $R$ be a (not necessarily commutative) associative unital ring with
     $\IQ \subseteq R$.  Let $G$ be a td-group with a normal (not necessarily open or
     central) subgroup $N \subseteq G$. Put $Q = G/N$.  Then we obtain an extension of
     td-groups $1 \to N \to G \xrightarrow{\pr} Q \to 1$.

     Consider a group homomorphism $\rho \colon G \to \aut(R)$, where $\aut(R)$ is the
     group of automorphism of the unital ring $R$.  We will assume  throughout the paper
     that the kernel of $\rho$ is open, in other words, $G$ acts smoothly on $R$.

     We write $gr = \rho(g)(r)$ for
     $g \in G$ and $r \in R$.  With this notation we get $er = r$, $g1 = 1$,
     $(g_1g_2)r = g_1(g_2r)$, $g(r_1r_2) = (gr_1)(gr_2)$ and $g(r_1 + r_2)= gr_1 + gr_2$
     for $g,g_1,g_2 \in G$, $r,r_1,r_2 \in R$, and the units $e \in G$ and $1 \in R$.

A \emph{normal character} is a locally constant group homomorphism
\[
  \omega\colon N \to \cent(R)^{\times}
\]
to the multiplicative group of central units of $R$ satisfying
\begin{eqnarray}
  \omega(gng^{-1}) & = & \omega(n)
                         \label{omega_and_conjugation}
\end{eqnarray}
for all $n \in N$ and $g \in G$.  Note that $\ker(\omega)$ is an open subgroup of $N$ and
a normal subgroup of $G$.  We will need the following compatibility condition between the
normal character and the $G$-action $\rho$ on $R$, namely
for $n \in N$, $g \in G$, and $r \in R$
\begin{eqnarray}
  g \omega(n) & = & \omega(n);
                    \label{rho_and_omega}
  \\
  n \cdot r & = & r.
                  \label{N_and_rho}
\end{eqnarray}


\subsection{The construction of the Hecke algebra}%
\label{subsec:The_construction_of_the_Hecke_algebra}
Let $\mu$ be a \emph{$\IQ$-valued Haar measure on $Q$}, i.e., a Haar measure $\mu$ on $G$
such that for any compact open subgroup $K \subseteq Q$ we have $\mu(K) \in \IQ^{>
  0}$. Given any Haar measure $\mu$ on $G$, we can normalize it to a $\IQ$-valued Haar
measure by choosing a compact open subgroup $L_0 \subseteq G$ and defining
$\mu' = \frac{1}{\mu(L_0)} \cdot \mu$.

An element $s$ in the \emph{Hecke algebra} $\calh(G;R,\rho,\omega)_{\mu}$ is
given by a map $s \colon G \to R$ with the following properties

\begin{itemize}

\item The map  $s \colon G \to R$ is locally constant;
  
\item The image of its support $\supp(s) := \{g \in G \mid s(g) \not= 0\}\subseteq G$
  under $\pr \colon G \to Q$ is a compact subset of $Q$;

\item For $n \in N$ and $g \in G$ we have
  \begin{eqnarray}
    s(ng)  & =  &  \omega(n) \cdot s(g);
                  \label{condition_s_and_omega:left}
    \\
    s(gn)  & =  & s(g) \cdot \omega(n).
                  \label{condition_s_and_omega:right} 
  \end{eqnarray}             
  
\end{itemize}

\begin{definition}\label{def:admissible}
  Let $P_{\rho,\omega}$ the subset of compact open subgroups $K \subseteq G$ satisfying
  \begin{eqnarray}
    kr & = & r \quad \text{for} \; k \in K, r \in R.
   \label{calt_hecke_algebra:tau_and_multiplication_with_r} 
    \\
    \omega(n) & = & 1 \quad \text{for} \; n \in N \cap K;
    \label{tau_and_omega}
  \end{eqnarray}
  We abbreviate $P = P_{\rho,\omega}$ if  $\rho$ and $\omega$ are clear from the context.
  
  We call an element $K \in P$ \emph{admissible}  for $s \colon G \to R$,
  if  for all $g \in G$ and   $k \in K$ we have
  \begin{eqnarray}
    s(kg)  & = &  s(g);
                 \label{condition_s_Hecke_algebra:left}
    \\
    s(gk) 
           & = & 
                 s(g).
                 \label{condition_s_Hecke_algebra:right}
  \end{eqnarray}
  \end{definition}

Note that the existence of an admissible element $K \in P$ is equivalent to the condition
that $s$ is locally constant, since we assume that $s$ has compact support.  Moreover,
for $K \in P$, which is admissible for $s$, every open subgroup $K' \subseteq K$ is also admissible.

\begin{remark}[Redundancy]\label{rem:redundance}
  Note that  condition~\eqref{condition_s_and_omega:right} follows from
  conditions~\eqref{omega_and_conjugation} and~\eqref{condition_s_and_omega:left} by the
  following calculation
  \[
    s(gn) = s(gng^{-1}g) \stackrel{\eqref{condition_s_and_omega:left}}{=} \omega(gng^{-1})
    \cdot s(g) \stackrel{~\eqref{omega_and_conjugation}}{=} \omega(n) \cdot s(g)
    \stackrel{\omega(n) \in \cent(R)}{=} s(g) \cdot \omega(n).
  \]
  Analogously condition~\eqref{condition_s_and_omega:left} follows from
  conditions~\eqref{omega_and_conjugation} and~\eqref{condition_s_and_omega:right}.
\end{remark}

The sum of two elements $s,s'$ in $\calh(G;R,\rho,\omega)_{\mu}$ is defined by
\begin{eqnarray}
  (s+s')(g) := s(g) + s'(g) \; \text{for} \; g \in G.
  \label{sum_IN_calh)}
\end{eqnarray}

Consider $K \in P$ which is admissible for $s$ and  admissible for
$s'$, and a transversal $T$ for the projection $p \colon G \to G/N\!K$, where $N\!K$ is
the subgroup of $G$ given by $\{nk \mid n\in N, k \in K\}$.  Define the product
$s \cdot s'$ by
\begin{eqnarray}
  (s \cdot s')(g) 
  & := &
         \mu(\pr(K)) \cdot \sum_{g' \in T} s(gg') \cdot gg's'(g'^{-1}).
         \label{calt_hecke_algebra:product}
\end{eqnarray}

Note that $K$ may depend on $s$, but not on $g$, whereas $T$
can depend on both $s$ and $g$. The independence of the transversal follows from the following computation for
$g \in G$, $g' \in G'$, $n \in N$ and $k \in K$
\begin{multline*}
  s(g(g'nk)) \cdot g(g'nk)s'((g'nk)^{-1})
  =
  s((gg'n)k) \cdot (gg'n)ks'(k^{-1}n^{-1}g'^{-1})
  \\
  \stackrel{\eqref{condition_s_Hecke_algebra:left},~\eqref{condition_s_Hecke_algebra:right}}{=}
  s(gg'n) \cdot (gg'n)ks'(n^{-1}g'^{-1})
  \stackrel{\eqref{calt_hecke_algebra:tau_and_multiplication_with_r} }{=}
  s(gg'n) \cdot gg'ns'(n^{-1}g'^{-1})
  \\
  \stackrel{\eqref{condition_s_and_omega:left},~\eqref{condition_s_and_omega:right}}{=}
  s(gg') \cdot \omega(n) \cdot gg'n(\omega(n^{-1}) \cdot s'(g'^{-1}))
  =
  s(gg') \cdot \omega(n) \cdot gg'n\omega(n^{-1}) \cdot gg'ns'(g'^{-1})
  \\
  \stackrel{\eqref{rho_and_omega},~\eqref{N_and_rho}}{=}
  s(gg') \cdot \omega(n) \cdot \omega(n^{-1}) \cdot gg's'(g'^{-1})
  =
  s(gg') \cdot \omega(n \cdot n^{-1}) \cdot gg's'(g'^{-1})
  \\
  = s(gg') \cdot \omega(e) \cdot gg's'(g'^{-1}) = s(gg') \cdot  gg's'(g'^{-1}).
\end{multline*}

We leave the elementary proof to the reader that the definition of the
product~\eqref{calt_hecke_algebra:product} is independent of the choice of $K$ and that we
do get the structure of a (non-unital) ring on $\calh(G;R,\rho,\omega)_{\mu}$.  A more
general setting including all proofs will be presented in details
in~\cite{Bartels-Lueck(2020foundations)}.  Moreover, one easily checks

\begin{lemma}\label{lem:condition_compatible_with_product}
  Consider two elements $s,s' \in \calh(G;R,\rho,\omega)_{\mu}$ and compact open subgroups
  $K,K'$ of $G$.  Suppose that $K$ admissible for $s$ and $K'$ is admissible for $s'$.

  Then $K \cap K'$ is admissible for the product $s' \cdot s$.
\end{lemma}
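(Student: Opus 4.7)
The plan is to set $L := K \cap K'$, verify that $L \in P$ and that $L$ is admissible for both $s$ and $s'$, and then apply the product formula~\eqref{calt_hecke_algebra:product} for $s' \cdot s$ with $L$ as the common compact open subgroup. Membership $L \in P$ is immediate since $L$ is compact open and conditions~\eqref{calt_hecke_algebra:tau_and_multiplication_with_r} and~\eqref{tau_and_omega} are inherited from $K$ via the inclusion $L \subseteq K$. Admissibility of $L$ for $s$ and for $s'$ follows from the remark after Definition~\ref{def:admissible} that any open subgroup of an admissible subgroup is again admissible, applied to $L \subseteq K$ and $L \subseteq K'$.

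Having fixed $L$ and a transversal $T$ for $G \to G/NL$, we work with
\[
  (s' \cdot s)(g) \;=\; \mu(\pr(L)) \cdot \sum_{g' \in T} s'(gg') \cdot gg' s(g'^{-1}).
\]
For the left condition~\eqref{condition_s_Hecke_algebra:left}, pick $\ell \in L$ and rewrite the $g'$-th summand of $(s' \cdot s)(\ell g)$. The factor $s'(\ell g g')$ collapses to $s'(gg')$ by left admissibility of $L$ for $s'$, while the factor $\ell g g' s(g'^{-1})$ equals $gg' s(g'^{-1})$ because $\rho(\ell)$ acts trivially on $R$ by~\eqref{calt_hecke_algebra:tau_and_multiplication_with_r}. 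Summing reproduces $(s' \cdot s)(g)$.

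The main obstacle I expect is the right condition~\eqref{condition_s_Hecke_algebra:right}, since the $\ell$ in $(s' \cdot s)(g\ell)$ is sandwiched between $g$ and $g'$ and cannot be absorbed by either admissibility condition directly. The key move is the substitution $h := \ell g'$: the $g'$-th summand of $(s' \cdot s)(g\ell)$ then rewrites as $s'(gh) \cdot gh \, s(h^{-1}\ell)$, and $s(h^{-1}\ell) = s(h^{-1})$ by right admissibility of $L$ for $s$. One must still check that the new index set $\ell T$ is again a transversal for $G \to G/NL$; this reduces to the observation that left multiplication by $\ell$ induces a well-defined bijection of $G/NL$, because $g_1^{-1} g_2 \in NL$ iff $(\ell g_1)^{-1}(\ell g_2) \in NL$. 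Invoking the independence of the product formula on the choice of transversal, which was established in the paragraph preceding the lemma, the reindexed sum equals $(s' \cdot s)(g)$. This completes the verification that $L = K \cap K'$ is admissible for $s' \cdot s$.
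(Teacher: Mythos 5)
Your proof is correct; the paper offers no argument for this lemma (it is introduced with ``one easily checks''), and what you give is exactly the intended verification: pass to $L=K\cap K'\in P$, which is admissible for both factors, absorb a left factor $\ell$ using left admissibility of $L$ for $s'$ together with~\eqref{calt_hecke_algebra:tau_and_multiplication_with_r}, and handle the right condition by the substitution $h=\ell g'$ combined with the transversal-independence computation carried out just before the lemma. The only cosmetic point is that, with the paper's convention, a transversal contains the unit, which $\ell T$ need not; this is harmless, since that same computation shows each summand depends only on the coset $g'N\!L$ and not on the chosen representative.
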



\subsection{Functoriality in $Q$}\label{subsec:Functoriality_in_Q}

Let $G$, $N$, $Q$, $R$, $\rho$, $\omega$, and $\mu$ be as in
Subsection~\ref{subsec:normal_characters}.  In particular we can consider the Hecke
algebra $\calh(G;R,\rho,\omega)_{\mu}$ see
Subsection~\ref{subsec:The_construction_of_the_Hecke_algebra}.

Consider a (not necessarily injective or surjective) open group homomorphism
$\phi \colon G' \to G$ of td-groups. Let $N' \subseteq G'$ be a normal subgroup satisfying
\begin{equation}
  \phi(N') =  N.
  \label{phi(N')_is_N}
\end{equation}
Denote by $\pr' \colon G' \to Q' := G'/N'$ the projection. Let
$\overline{\phi} \colon Q' \to Q$ be the open group homomorphism induced by $\phi$.
Define a group homomorphism $\rho' \colon G' \to \aut(R)$ and a normal character
$\omega' \colon N' \to \cent(R)^{\times}$ by
\begin{eqnarray}
  \rho'
  & = &
        \rho \circ \phi;
        \label{rho_is_rho'_circ_phi}
  \\
  \omega'(n')
  & = &
        \omega(\phi(n')) \quad \text{for}\; n' \in N'.
        \label{omega'(n')_is_omega(beta(n')}
\end{eqnarray}
Choose a $\IQ$-valued Haar measure on $\mu'$ on $Q'$.  Then we can consider the Hecke
algebra $\calh(G';R,\rho',\omega')_{\mu'}$. Next we want to construct a homomorphism
of rings
\begin{equation}
  \phi_* \colon \calh(G';R,\rho',\omega')_{\mu'}  \to \calh(G;R,\rho,\omega)_{\mu}.
  \label{phi_ast_coming_from_open_phi}
\end{equation}
Consider an element $s' \colon G' \to R$ in
$\calh(G';R,\rho',\omega')_{\mu'}$. Choose $K' \in P_{\rho',\omega'}$,  which is
admissible for $s'$. Then $\phi(K') \in P_{\rho,\omega}$.
Fix $g \in G$. Consider $g' \in \phi^{-1}(g\phi(N'K'))$.  Then $\phi(g')^{-1}g$ belongs to
$\phi(N'K')$. Choose $n' \in N'$ and $k' \in K'$ with $\phi(g'n'k') = g$. Put
\begin{eqnarray}
  \widetilde{s'}(g',g)
  & := &
         s'(g')  \cdot \omega(\phi(n'))  \in R.
         \label{widetilde(s)(g',g)}
\end{eqnarray}
One easily checks  that this definition independent of the
choice of $n' \in N'$ and $k' \in K'$.  Obviously we have
$\widetilde{s'}(g',\phi(g')) = s'(g')$ for $g' \in G'$.  Choose a transversal $T'$ of the
projection $G' \to G'/N'K'$, which is allowed to depend on $s'$. Put
$T'(g) = T' \cap \phi^{-1}(g\phi(N'K'))$. Then we  define
\begin{eqnarray}
  \phi_*(s') (g)
  & = & 
        \frac{\mu'(\pr'(K'))}{\mu(\pr(\phi(K')))} \cdot \sum_{g' \in T'(g)} \widetilde{s'}(g',g).
        \label{phi_ast(s')(g)}
\end{eqnarray}
This is a well-defined element in $\calh(G;R,\rho,\omega)_{\mu}$, which is
independent of the choice of $T$ and $K'$. One easily checks

\begin{lemma}\label{lem:properties_of_phi_ast(s')}
  \
  \begin{enumerate}

  \item\label{lem:properties_of_phi_ast(s'):support} We have
    $\supp(\phi_*(s')) \subseteq \phi(\supp(s'));$

  \item\label{lem:properties_of_phi_ast(s'):phi(K'):admissible} If $K' \in P'_{\rho',\omega'}$ is
    admissible for $s'$, then $\phi(K')$ admissible for $\phi_{\ast}(s')$;
 
  \item\label{lem:properties_of_phi_ast(s'):injective_phi} Suppose that $\phi$ is
    injective. Then we get
    \[
      \phi_*(s')(g) =
      \begin{cases}
        \frac{\mu'(\pr'(K'))}{\mu(\pr(\phi(K')))} \cdot s(g') & \text{if} \; \phi(g') = g
        \;\text{for some }\; g' \in G'
        \\
        0 & g' \notin \im(\phi).
      \end{cases}
    \]
    and
    \[
      \supp_G(\phi_*(s')) = \phi(\supp_{G'}(s'));
    \]

  \item\label{lem:properties_of_phi_ast(s'):ring_homo}
    The map $\phi_* \colon \calh(G';R,\rho',\omega')_{\mu'}  \to \calh(G;R,\rho,\omega)_{\mu}$ is a homomorphism
    of (non-unital) rings.
  \end{enumerate}

\end{lemma}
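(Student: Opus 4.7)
All four claims are verified directly from the defining formula~\eqref{phi_ast(s')(g)} together with the compatibility relations $\rho'=\rho\circ\phi$ and $\omega'(n')=\omega(\phi(n'))$. I plan to handle them in order of increasing difficulty: (i) and (iii) essentially by unpacking the formula, (ii) by a transversal change of variables, and (iv) by a double-sum bookkeeping argument.

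For (i), suppose $\phi_*(s')(g)\neq 0$. Then at least one summand $\widetilde{s'}(g',g)=s'(g')\cdot\omega(\phi(n'))$ in~\eqref{phi_ast(s')(g)} is nonzero; since $\omega$ takes values in units, $s'(g')\neq 0$. Using~\eqref{condition_s_Hecke_algebra:right} (the right-admissibility of $K'$ for $s'$) and then~\eqref{condition_s_and_omega:right}, we get $s'(g'n'k')=s'(g'n')=s'(g')\cdot\omega'(n')$, which is also nonzero. Hence $g'n'k'\in\supp(s')$ and $g=\phi(g'n'k')\in\phi(\supp(s'))$. For (iii), when $\phi$ is injective the fiber $\phi^{-1}(g\phi(N'K'))$ is either empty or a single $N'K'$-coset, so $T'(g)$ has at most one element, and reading off~\eqref{phi_ast(s')(g)} gives the displayed formula. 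The identity $\supp_G(\phi_*(s'))=\phi(\supp_{G'}(s'))$ then follows because the sum collapses to a single term and $\omega(\phi(n'))$ is a unit, so $\phi_*(s')(g)=0$ if and only if $s'(g')=0$.

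For (ii), observe first that $\phi(K')$ is compact open in $G$ because $\phi$ is an open homomorphism and $K'$ is compact open. Condition~\eqref{calt_hecke_algebra:tau_and_multiplication_with_r} for $\phi(K')$ follows from $\rho(\phi(k'))(r)=\rho'(k')(r)=r$ for $k'\in K'$ and $r\in R$. For condition~\eqref{tau_and_omega} one shrinks $K'$, if necessary, so that every $n\in N\cap\phi(K')$ admits a preimage in $N'\cap K'$; this is harmless since $\phi_*$ is independent of the choice of admissible $K'$. The admissibility conditions~\eqref{condition_s_Hecke_algebra:left} and~\eqref{condition_s_Hecke_algebra:right} for $\phi(K')$ on $\phi_*(s')$ then follow by a change of transversal: for $k_0\in K'$ the fiber $\phi^{-1}(\phi(k_0)g\phi(N'K'))$ coincides with $\phi^{-1}(g\phi(N'K'))$ up to right-translation by $N'K'$, and the values $\widetilde{s'}(\cdot,\cdot)$ are preserved under this shift by the admissibility of $K'$ for $s'$.

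Part (iv), the ring homomorphism property, is the main obstacle. I would choose a single $K'$ that is admissible for both $s_1'$ and $s_2'$ (available by Lemma~\ref{lem:condition_compatible_with_product}), insert the product formula~\eqref{calt_hecke_algebra:product} into both $\phi_*(s_1'\cdot s_2')(g)$ and $(\phi_*(s_1')\cdot\phi_*(s_2'))(g)$, and rearrange the resulting double sums indexed by transversals of $G'/N'K'$ and $G/N\phi(K')$ respectively. The normalizing factor $\mu'(\pr'(K'))/\mu(\pr(\phi(K')))$ is designed precisely so that the Haar-measure coefficients balance. The cleanest route when $\phi$ is neither injective nor surjective is to factor $\phi$ as a surjection followed by an injection and treat the two factors separately: the injective case is essentially (iii), while the surjective case with compact kernel amounts to an averaging over that kernel, for which the volume normalization is exactly the correct weight. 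Additivity under the sum~\eqref{sum_IN_calh)} is immediate from the formula.
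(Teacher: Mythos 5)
The paper itself offers no argument for this lemma beyond ``One easily checks'' (the details are deferred to the foundations paper), so your proposal has to stand on its own. Parts (i) and (iii) are essentially correct: for (iii) note only that the single element $g_1'$ of $T'(g)$ is a representative of the coset $g_0'N'K'$ of the actual preimage $g_0'$ of $g$, not $g_0'$ itself, so one still needs the one-line computation $\widetilde{s'}(g_1',g)=s'(g_1'n'k')=s'(g_0')$ via~\eqref{condition_s_Hecke_algebra:right} and~\eqref{condition_s_and_omega:right} before ``reading off'' the displayed formula. In (ii) you correctly spotted that condition~\eqref{tau_and_omega} for $\phi(K')$ is not formal (it does follow when $\phi^{-1}(N)=N'$, as in both applications in Sections~\ref{subsec:covirtually_Z_groups} and~\ref{sec:Some_input_for_the_Farrell-Jones_Conjecture}), but your fix does not prove the statement: the lemma is an assertion about the \emph{given} admissible $K'$, and shrinking $K'$ proves something else. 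Moreover the shrinking you ask for (every $n\in N\cap\phi(K'')$ lifts to $N'\cap K''$) need not be attainable; what one can always arrange, and what is actually needed, is $N\cap\phi(K'')\subseteq\ker(\omega)$, using that $\omega\circ\phi$ is locally constant on $\phi^{-1}(N)$ so that its kernel is open. Your verification of~\eqref{condition_s_Hecke_algebra:left} is also misdescribed: for $k=\phi(k_0)$ the fiber $\phi^{-1}(kg\,\phi(N'K'))$ is the \emph{left} translate $k_0\cdot\phi^{-1}(g\,\phi(N'K'))$ (only for right multiplication is the fiber literally unchanged), so the transversal representatives change by left multiplication and one needs the left $K'$-invariance of $s'$ together with the independence of $\widetilde{s'}(g',g)$ of the choice of $(n',k')$; ``up to right-translation by $N'K'$'' is not the correct mechanism.

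The genuine gap is (iv), which is the substantive part of the lemma and which you only outline. The double-sum rearrangement comparing $\phi_*(s_1'\cdot s_2')$ with $\phi_*(s_1')\cdot\phi_*(s_2')$, including the bookkeeping of the factor $\mu'(\pr'(K'))/\mu(\pr(\phi(K')))$ against the factors $\mu'(\pr'(K'))$ and $\mu(\pr(\phi(K')))$ coming from~\eqref{calt_hecke_algebra:product} on the two sides, is precisely the content, and none of it is carried out. The proposed shortcut is also shaky on two counts: factoring $\phi$ through its image requires proving that the construction~\eqref{phi_ast(s')(g)} is compatible with composition (with consistently normalized Haar measures on the intermediate quotient), which is itself a statement of the same order of difficulty as (iv); and the ``surjective case with compact kernel'' assumes a compactness of $\ker\phi$ that is nowhere in the hypotheses. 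The finiteness that makes~\eqref{phi_ast(s')(g)} well defined comes from compactness of $\pr'(\supp(s'))$ in $Q'$, so the relevant ``averaging'' is over the finitely many $N'K'$-cosets in $\phi^{-1}(g\,\phi(N'K'))$ meeting $\supp(s')$, not over the kernel; an argument along your lines would have to be reorganized accordingly, or else one should do the direct computation for general $\phi$.
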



\subsection{Approximate units}\label{subsec:Approximate_units}

    \begin{definition}[Rings with approximate units]\label{def:ring_with_approximate_unit}
      An \emph{approximate unit} for a ring $R$ is a subset $\{e_i \mid i \in I\}$ of
      elements $e_i \in R$ indexed by some directed set $I$ such that
      $e_i \cdot e_j = e_i = e_j \cdot e_i $ holds for $i \le j$ and for every element
      $r \in R$ there exists an index $i \in I$ with $e_i \cdot r = r = r \cdot e_i$.
    \end{definition}
    The ring $R$ has an approximate unit, if and only if there is a directed system of
    subrings $\{R_i \mid i \in I\}$ indexed by inclusion such that each $R_i$ is unital
    and $R = \bigcup_{i \in I} R_i$.  Obviously a unital ring has an approximate unit.
     
    Note that the ring $\calh(G;R,\rho,\omega)_{\mu}$ has a unit, if and only if $G$ is
    discrete. If $G$ is not discrete, $\calh(G;R,\rho,\omega)_{\mu}$ has at least an
    approximate unit by the following construction.
    
    Lemma~\ref{lem:condition_compatible_with_product} implies for  
    $K \in P$ that the subset
    \begin{equation}
      \calh(G//K;R,\rho,\omega)_{\mu} \subseteq \calh(G;R,\rho,\omega)_{\mu}
      \label{calh(G//K}
    \end{equation}
    consisting of those elements, for which $K$ is admissible, is closed under addition
    and multiplication and hence is a subring. Define an element $1_K$ in
    $\calh(G//K;R,\rho,\omega)_{\mu}$ by
    \begin{equation}
      \label{unit_in_calh(G//K}
      1_K(g) = \begin{cases}
        \frac{1}{\mu(\pr(K))} \cdot \omega(n)  &
        \text{if}\; g \in N\!K, g = nk \;\text{for}\; n \in N, k \in K;
        \\
        0  & \text{otherwise}.
      \end{cases}
    \end{equation}

\begin{lemma}\label{lem:calh(H)_as_union} 
  The element $1_{K}$ is a unit in $\calh(G//K;R,\rho,\omega)_{\mu}$. Moreover
  \begin{eqnarray*}
    \calh(G//K;R,\rho,\omega)_{\mu} 
    & \subseteq & 
                  \calh(G//K';R,\rho,\omega)_{\mu} \quad \text{if} \; K' \subseteq K;
    \\
    \calh(G;R,\rho,\omega)_{\mu}  
    & = & 
          \bigcup_{K} \calh(G//K;R,\rho,\omega)_{\mu},
  \end{eqnarray*}
  where $K$ and $K'$ run through the elements of $P$.
\end{lemma}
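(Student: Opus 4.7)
\medskip
\noindent\textbf{Plan of proof.} The three assertions are essentially bookkeeping around the product formula~\eqref{calt_hecke_algebra:product} and the compatibility relations~\eqref{rho_and_omega}--\eqref{N_and_rho}, \eqref{calt_hecke_algebra:tau_and_multiplication_with_r}--\eqref{tau_and_omega}. I would handle them in the order (ii), (iii), (i), doing the indexing lemmas first and saving the unit computation for last.

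\medskip
\noindent\textbf{Step 1 (the inclusion, (ii)).} If $K' \subseteq K$ then $K' \in P$, since conditions~\eqref{calt_hecke_algebra:tau_and_multiplication_with_r} and~\eqref{tau_and_omega} pass to subgroups. Moreover, any $s$ for which $K$ is admissible is automatically $K'$-admissible, because~\eqref{condition_s_Hecke_algebra:left}--\eqref{condition_s_Hecke_algebra:right} hold for all $k \in K$ and hence for all $k \in K'$. This is immediate and requires no calculation.

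\medskip
\noindent\textbf{Step 2 (the union, (iii)).} This is the statement already recorded after Definition~\ref{def:admissible}: an $s \colon G \to R$ with compact (mod $N$) support lies in $\calh(G;R,\rho,\omega)_\mu$ iff it is locally constant, iff there exists some $K \in P$ admissible for $s$. One just needs to observe that by local constancy and by conditions~\eqref{calt_hecke_algebra:tau_and_multiplication_with_r} and~\eqref{tau_and_omega}, one can shrink any initial admissible open subgroup until it lies in $P$, which is automatic since small enough compact open subgroups act trivially on (the finite set of) values of $s$ and on $1 \in R$.

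\medskip
\noindent\textbf{Step 3 (the unit, (i)).} This is the heart of the lemma and the only computational step. Fix $s \in \calh(G//K;R,\rho,\omega)_\mu$ and $g \in G$. To evaluate $(1_K \cdot s)(g)$ via~\eqref{calt_hecke_algebra:product}, choose a transversal $T$ of $G \to G/N\!K$. Since $1_K$ is supported on $N\!K$, exactly one $g'_0 \in T$ contributes, namely the one in the coset $g^{-1} N\!K$, and I would write $gg'_0 = n_0 k_0$ with $n_0 \in N$, $k_0 \in K$. Then $1_K(gg'_0) = \mu(\pr(K))^{-1}\omega(n_0)$ and $g'^{-1}_0 = k_0^{-1} n_0^{-1} g$. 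Using $K$-admissibility of $s$ one replaces $s(g'^{-1}_0)$ by $s(n_0^{-1}g) = \omega(n_0)^{-1} s(g)$ by~\eqref{condition_s_and_omega:left}, and then the $\rho$-action of $gg'_0 = n_0 k_0$ on this is trivial by~\eqref{N_and_rho} and~\eqref{calt_hecke_algebra:tau_and_multiplication_with_r}. Combining, the $\omega(n_0)$ and $\omega(n_0)^{-1}$ cancel, the measures cancel, and one gets $s(g)$. The computation for $s \cdot 1_K$ is symmetric (and only requires the right-handed admissibility~\eqref{condition_s_Hecke_algebra:right} and~\eqref{condition_s_and_omega:right}). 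The main obstacle here is purely notational: making sure that the choices of $n_0$, $k_0$, and $g'_0$ cancel correctly, which is guaranteed by the same independence-of-transversal check carried out after~\eqref{calt_hecke_algebra:product}.
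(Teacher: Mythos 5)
Your proof is correct and is exactly the direct verification the paper intends (it states the lemma without proof, as elementary): shrinking $K$ handles the inclusion and the union via the remark after Definition~\ref{def:admissible}, and the one-term evaluation of the product formula gives $1_K\cdot s=s=s\cdot 1_K$. One small quibble: the computation of $(s\cdot 1_K)(g)$ is not quite as one-sided as your parenthetical suggests, since evaluating $1_K(g_0'^{-1})$ and discarding the $\rho$-action of $gg_0'$ on $\omega(n_0^{-1})$ also use~\eqref{omega_and_conjugation} and~\eqref{rho_and_omega}, both of which are standing hypotheses, so nothing breaks.
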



\subsection{Discarding $\mu$}\label{subsec:Discarding_mu}

In the sequel we omit the subscript $\mu$ in the notation of the Hecke algebra, since for
two $\IQ$-valued Haar measures $\mu $ and $\mu'$ on $G/N$ there is precisely one rational
number $r$ satisfying $r > 0$ and $\mu' = r \cdot \mu$, and the map
\[\calh(G;R,\rho,\omega)_{\mu'} \xrightarrow{\cong}
  \calh(G;R,\rho,\omega)_{\mu}, \quad s \mapsto r \cdot s.
\]
is an isomorphism of rings.


\typeout{----- Section 3: $\IZ$-categories, additive categories and idempotent completions  -------}

\section{$\IZ$-categories, additive categories and idempotent completions}%
\label{sec:Z-categories_additive_categories_and_idempotent_completions}

A \emph{$\IZ$-category} is a category $\cala$
such that for every two objects $A$ and $A'$ in $\cala$ the set of morphisms
$\mor_{\cala}(A,A')$ has the structure of a $\IZ$-module  and composition is
$\IZ$-bilinear.  If $G$ is a group, a \emph{$G$-$\IZ$-category} is a $\IZ$-category
with a left $G$-action by automorphisms of $\IZ$-categories. Note that we do not require
that $\cala$ has identity morphisms.  Given a
ring $R$, we denote by $\underline{R}$ the $\IZ$-category with precisely one object, whose
$\IZ$-module of endomorphisms is given by $R$ with its additive structure and
composition is given by the multiplication in $R$.  Obviously $\underline{R}$ is unital, if
and only if $R$ is unital.

An \emph{additive category} is a $\IZ$-category with finite direct sums.
Given a ring $R$, the category $\MODcat{R}_{\fgf}$ of finitely generated free $R$-modules 
carries an obvious structure of an additive category. Note that we do not require
that $\cala$ has identity morphisms. If it does,  we call it unital.

Given a $\IZ$-category $\cala$, let $\cala_{\oplus}$ be the associated additive category, whose
objects are finite tuples of objects in $\cala$ and whose morphisms are given by matrices
of morphisms in $\cala$ (of the right size) and the direct sum is given by concatenation
of tuples and the block sum of matrices, see for
instance~\cite[Section~1.3]{Lueck-Steimle(2016BHS)}. If $\cala$ is unital, $\cala_{\oplus}$ is unital.

Let $R$ be a unital ring. Then the obvious inclusion of unital additive categories 
\begin{equation}
\underline{R}_{\oplus} \xrightarrow{\simeq} \MODcat{R}_{\fgf}
\label{underline(r)_oplus_to_MODcat(R)_fgf}
\end{equation}
is an equivalence of unital additive categories.

Given an additive category $\cala$, its \emph{idempotent completion} $\Idem(\cala)$ is
defined to be the following additive category. Objects are morphisms $p \colon A \to A$ in
$\cala$ satisfying $p \circ p = p$.  A morphism $f$ from $p_1 \colon A_1 \to A_1$ to
$p_2 \colon A_2 \to A_2$ is a morphism $f \colon A_1 \to A_2$ in $\cala$ satisfying
$p_2 \circ f \circ p_1 = f$.  Note that $\Idem(\cala)$ is always unital, regardless
whether $\cala$ is unital or not.
The identity of an object $(A,p)$ is given by the morphism $p \colon (A,p) \to (A,p)$.

If $\cala$ is unital, then there is a obvious embedding
\[
\eta(\cala)\colon\cala \to \Idem(\cala)
\]
sending an object $A$ to
$\id_A \colon A \to A$ and a morphism $f \colon A \to B$ to the morphism given by $f$ again.  
A unital additive category $\cala$ is called
\emph{idempotent complete}, if $\eta(\cala)\colon \cala \to \Idem(\cala)$ is an
equivalence of unital additive categories, or, equivalently, 
if for every idempotent $p \colon A \to A$ in $\cala$
there are objects $B$ and $C$ and an isomorphism 
$f \colon A \xrightarrow{\cong} B \oplus C$ in $\cala$
such that $f \circ p \circ f^{-1} \colon B \oplus C \to B \oplus C$ is given by 
$\squarematrix{\id_B}{0}{0}{0}$.  The idempotent completion $\Idem(\cala)$ of a unital
additive category $\cala$ is idempotent complete.

Let $R$ be unital ring. Let $\MODcat{R}_{\fgp}$ be the unital additive category of
finitely generated projective $R$-modules. We obtain an equivalence of unital additive categories
$\Idem(\MODcat{R}_{\fgf}) \xrightarrow{\simeq} \MODcat{R}_{\fgp}$ by sending an object
$(F,p)$ to $\im(p)$. It and  the functor of~\eqref{underline(r)_oplus_to_MODcat(R)_fgf}
induce an equivalence of unital additive categories
\begin{equation}
\theta_R \colon \Idem\bigl(\underline{R}_{\oplus}\bigr) \xrightarrow{\simeq}  \MODcat{R}_{\fgp}.
\label{underline(r)_oplus_to_MODcat(R)_fgp}
\end{equation}

Let $\cala$ be an additive category. Let $\Phi \colon \cala \to \cala$ be an automorphism
of additive categories. Define the the additive category $\cala_{\Phi}[t,t^{-1}]$ called
\emph{$\Phi$-twisted finite Laurent category}
  as follows. It has the   same objects as $\cala$.  Given two objects $A$ and $B$, a morphism 
  $f  \colon A \to B$ in $\cala_{\Phi}[t,t^{-1}]$ is a formal sum 
  $f = \sum_{i \in    \IZ} f_i \cdot t^i$, where $f_i \colon \Phi^i(A) \to B$ is a morphism in
  $\cala$ from $\Phi^i(A)$ to $B$ and only finitely many of the morphisms $f_i$ are non-trivial. If 
  $g   = \sum_{j \in \IZ} g_j \cdot t^j$ is a morphism in $\cala_{\Phi}[t,t^{-1}]$
  from $B$ to $C$, we define the composite $g \circ f \colon A \to C$ by
  \[
  g \circ f := \sum_{k \in \IZ} \biggl( \sum_{\substack{i,j \in \IZ,\\i+j = k}}
    g_j \circ \Phi^j(f_i)\biggr) \cdot t^k.
  \]
If $\cala$ is unital, then $\cala_{\Phi}[t,t^{-1}]$ is unital again.

Let $R$ be a (not necessarily unital) ring with an automorphism
$\phi \colon R \xrightarrow{\cong} R$ of rings.  Let $R_{\phi}[t,t^{-1}]$ be the ring of
$\phi$-twisted finite Laurent series with coefficients in $R$.  We obtain from $\phi$ an
automorphism $\Phi \colon \underline{R} \xrightarrow{\cong} \underline{R}$ of
$\IZ$-categories.  There is an obvious isomorphism of $\IZ$-categories
\begin{equation}
\underline{R}_{\Phi}[t,t^{-1}] \xrightarrow{\cong} \underline{R_{\phi}[t,t^{-1}]}.
\label{underline_and_Laurent}
\end{equation}
If $R$ is unital, then we obtain equivalences  of unital additive categories
\begin{eqnarray}
(\underline{R}_{\oplus})_{\Phi}[t,t^{-1}] 
&\xrightarrow{\simeq}  &
\MODcat{R_{\phi}[t,t^{-1}]}_{\fgf};
\nonumber
  \\
\Idem\bigl((\underline{R}_{\oplus})_{\Phi}[t,t^{-1}]\bigr)
&\xrightarrow{\simeq}  &
\MODcat{R_{\phi}[t,t^{-1}]}_{\fgp}.
\label{Idem_and_Z_upper_d_and_rings}
\end{eqnarray}


\typeout{-------- Section 4: The algebraic $K$-theory of additive categories ---------------}

\section{The algebraic $K$-theory of $\IZ$-categories}%
\label{sec:The_algebraic_K-theory_of_Z-categories}

Let $\cala$ be a unital additive category. A construction of the \emph{non-connective $K$-theory
spectrum} $\bfKinfty(\cala)$ of a unital additive category can be found for instance
in~\cite{Lueck-Steimle(2014delooping)} or~\cite{Pedersen-Weibel(1985)}.
We get from the canonical embedding $\eta(\cala)\colon\cala \to \Idem(\cala)$
a weak homotopy equivalence
$\bfKinfty(\eta(\cala)) \colon \bfKinfty(\cala) \to \bfKinfty(\Idem(\cala))$ on the
non-connective $K$-theory, see for instance~\cite[Lemma~3.3~(ii)]{Bartels-Lueck(2020additive)}.

  \begin{definition}[Algebraic $K$-theory of (not necessarily unital) $\IZ$-categories]%
\label{def:Algebraic_K-theory_of_Z-categories}
    We will define the \emph{algebraic $K$-theory spectrum $\bfKinfty(\cala)$} of the (not necessarily
    unital) $\IZ$-category $\cala$ to be the non-connective algebraic $K$-theory spectrum of the
    unital additive  category $\Idem(\cala_{\oplus})$. Define for $n \in \IZ$
   \[
   K_n(\cala) := \pi_n(\bfKinfty(\cala)).
   \]
  \end{definition}

  Note that Definition~\ref{def:Algebraic_K-theory_of_Z-categories} extends the definition
  of the non-connective $K$-theory spectrum of unital additive categories to not
  necessarily unital $\IZ$-categories.

  A functor $F \colon \cala \to \cala'$ of (not necessarily unital) $\IZ$-categories
  induces a map of spectra
   \begin{equation}
   \bfKinfty(F) \colon \bfKinfty(\cala) \to \bfKinfty(\cala').
   \label{bfK(F)_colon_bfK(cala)_to_bfK(cala')}
   \end{equation}

   If the (not necessarily unital) $\IZ$-category $\cala$ is the directed
   union of (not necessarily unital) $\IZ$-subcategories $\cala_i$, then the canonical map
   \begin{equation}
   \hocolim_{i \in I} \bfKinfty(\cala_i) \xrightarrow{\simeq}  \bfKinfty(\cala)
   \label{hocolim_bfK(cala_i)_tobfK(cala)}
   \end{equation}
   is a weak homotopy equivalence and for every $n \in \IZ$ the canonical map
   \begin{equation}
   \colim_{i \in I} K_n(\cala_i) \xrightarrow{\cong} K_n(\cala)
   \label{colim_K_n(cala_i)_to_k_n(cala)}
   \end{equation}
   is a bijection.  We conclude~\eqref{hocolim_bfK(cala_i)_tobfK(cala)} and~\eqref{colim_K_n(cala_i)_to_k_n(cala)} 
   for instance from~\cite[Corollary~7.2]{Lueck-Steimle(2014delooping)}.

   If $R$ is an associative ring (not necessarily with a unit), we define the
   non-connective $K$-theory spectrum $\bfKinfty(R)$ to be $\bfKinfty(\underline{R})$ and
   $K_n(R) := \pi_n(\bfKinfty(R))$ for $n \in \IZ$.  If $R$ has an approximate unit, then
   our definition of $K_n(R)$ agrees with the usual definition of $K_n(R)$ for a ring
   without unit by the kernel of the map $K_n(R_+) \to K_n(\IZ)$, where $R_+$ is the ring
   with unit associated to $R$. Because of Lemma~\ref{lem:calh(H)_as_union} this applies
   to the Hecke algebra $\calh(G;R,\rho,\omega)$.


\typeout{------------------------- Section 5: Covirtually cyclic groups -------------------}

\section{Covirtually $\IZ$ groups}\label{subsec:covirtually_Z_groups}

Let $G$, $N$, $Q$, $R$, $\rho$, $P$, $\omega$, and $\mu$ as in
Subsection~\ref{subsec:normal_characters}.
In particular we can consider the Hecke algebra $\calh(G;R,\rho,\omega)$,
see  Subsection~\ref{subsec:The_construction_of_the_Hecke_algebra}.
Assume furthermore, that we have a normal open subgroup $L \subseteq G$ satisfying:

\begin{itemize}
\item $G/L$ is isomorphic to $\IZ$;
\item $N \subseteq L$;
\item $M := L/N$ is compact;
\end{itemize}

Note that we get exact sequences of td-groups $1 \to L \to G \to \IZ \to 1$
and $1 \to M \to Q \to \IZ \to 1$, where $\IZ$ is considered as discrete group
and $M$ is compact.

Let $g_0 \in G$ be any element which represents in $G/L$ a generator. Let
$\phi \colon L \to L$ be the automorphism of $L$ given by conjugation with $g_0$. Denote
by $L\rtimes_{c_{g_0}} \IZ$ the td-group given by the semi-direct
product of $L$ with the discrete group $\IZ$ with respect to $c_{g_0}$.  Then we get an
isomorphism of td-groups
\[
\alpha \colon L \rtimes_{c_{g_0}} \IZ \xrightarrow{\cong} G; \quad lt^n \mapsto lg_0^n,
\]
if $t \in \IZ$ is a fixed generator. It induces also an isomorphism
$\beta \colon M \rtimes_{c_{q_0}} \IZ \xrightarrow{\cong} Q$ , if we
put $q_0 = \pr(g_0)$. In the sequel we identify $G = L \rtimes_{c_{g_0N}} \IZ$ and 
$g_0$ with $e_Lt$ for $e_L \in L$ the unit and $Q = M \rtimes_{c_{g_0}} \IZ$ and 
$g_0N$ with $e_Qt$ for $e_Q \in Q$ the unit.

Since $L \subseteq G$ is open, the $\IQ$-valued measure $\mu$ on $G$ defines a $\IQ$-valued
measure on $L$ by restriction,  which we will denote by $\mu$ again. Note that we can
consider the Hecke algebra $\calh(L;R,\rho|_L,\omega)$.

Next we check that the automorphism $c_{g_0} \colon L \to L$ induces an automorphism of
rings
\begin{equation}
\phi \colon \calh(L;R,\rho|_L,\omega)\xrightarrow{\cong} \calh(L;R,\rho|_L,\omega)
\label{induced_auto_on_Hecke_algebra_of_L}
\end{equation}
by sending $s \in \calh(L;R,\rho|_L,\omega)$ given by a function $s \colon L \to R$
to the element given by the function $\phi(s) \colon L \to R, \;l \mapsto
ts(t^{-1}lt)$. Note that this is not just~\eqref{phi_ast_coming_from_open_phi} applied to
$c_{g_0}$, condition~\eqref{rho_is_rho'_circ_phi} $c_{g_0}$ is not satisfied for
$c_{g_0}$. So we have to check that $\phi(s)$ defines an element in
$\calh(L;R,\rho|_L,\omega)$.

Obviously the image of the  support of $\phi(s)$ under $L \to L/N$ is compact, since
this is true for $\supp(s)$ and $\supp(\phi(s)) = t\supp(s)t^{-1}$.

Suppose that $K \in P$ is admissible for $s$. Then $tKt^{-1}$ is
admissible for $\phi(s)$ by the following calculation for $l \in L$ and $k' \in tKt^{-1}$,
if we write $k' = tkt^{-1}$ for $k \in K$
\[
\phi(s)(k'l)
= 
ts(t^{-1}k'lt)
 = 
ts(t^{-1}tkt^{-1}lt)
= 
ts(kt^{-1}lt)
\stackrel{\eqref{condition_s_Hecke_algebra:left}}{=} 
t s(t^{-1}lt)
= \phi(s)(l),
\]
and 
\[
\phi(s)(lk') 
= 
ts(t^{-1}lk't)
=
ts(t^{-1}ltkt^{-1}t)
=  
ts(t^{-1}ltk)
\\
\stackrel{\eqref{condition_s_Hecke_algebra:right}}{=} 
ts(t^{-1}lt)
=
\phi(s)(l).
\]
The following calculation shows that
condition~\eqref{condition_s_and_omega:left} is satisfied.
\begin{multline*}
\phi(s)(nl)
 = 
  ts(t^{-1}nlt)
 =
 ts(t^{-1}ntt^{-1}lt)
 \stackrel{\eqref{condition_s_and_omega:left}}{=} 
 t\bigl( \omega(t^{-1}nt) \cdot s(t^{-1}lt)\bigr)
 \\
 =
 t \omega(t^{-1}nt) \cdot ts(t^{-1}lt)                                                             
\stackrel{\eqref{omega_and_conjugation},~\eqref{rho_and_omega}}{=} 
\omega(n) \cdot ts(t^{-1}lt)
= 
  \omega(n) \cdot \phi(s)(l).
\end{multline*}
Recall that the condition~\eqref{condition_s_and_omega:right} holds
automatically, see Remark~\ref{rem:redundance}. Hence $\phi$ is well-defined.

It is obviously compatible with the addition. It is compatible with the multiplication by
the following calculation for two elements $s,s' \in\calh(L;R,P|_L,\omega)$ and
$l \in L$, where $K \in P$ is admissible for both $s$ and $s'$, and $T$ is a transversal
for the projection $L \to L/N\!K$, and $\pr \colon L \to M = L/N$ is the projection.  We
will use the fact that $tTt^{-1}$ is a transversal for the projection $L \to L/NtKt^{-1}$
and $tKt^{-1}$ is admissible for $\phi(s)$ and $\phi(s')$. Moreover, we have
\begin{equation}
  [M:\pr(K)] = [tMt^{-1}:t\pr(K)t^{-1}] = [M :  \pr(tKt^{-1})].
  \label{indexM:pr_versus_M:pr(tkt_upper_(-1)}
\end{equation}
We compute
\begin{eqnarray*}
\phi(s\cdot s')(l)
& = & 
t(s \cdot s')(t^{-1}lt)
\\
& \stackrel{\eqref{calt_hecke_algebra:product}}{=} & 
t \left(\mu(\pr(K))\cdot \sum_{g'\in T} s(t^{-1}ltg') \cdot t^{-1}ltg's'(g'^{-1}) \right)
\\
& = & 
\mu(\pr(K)) \cdot \sum_{g'\in T} ts(t^{-1}ltg') \cdot ltg's'(g'^{-1}) 
\\
& = & 
\frac{\mu(M)}{[M:\pr(K)]}  \cdot \sum_{g'\in T} ts(t^{-1}ltg't^{-1}t) \cdot ltg't^{-1}ts'(t^{-1}tg'^{-1}t^{-1}t) 
\\
& \stackrel{\eqref{indexM:pr_versus_M:pr(tkt_upper_(-1)}}{=} & 
\frac{\mu(M)}{[M:\pr(tKt^{-1})]}\cdot \sum_{g''\in tTt^{-1}} ts(t^{-1}lg''t) \cdot lg''ts'(t^{-1}g''^{-1}t) 
\\
& = & 
\mu(\pr(tKt^{-1})) \cdot \sum_{g''\in tTt^{-1}} \phi(s)(lg'') \cdot lg''\phi(s')(g''^{-1})
\\
& \stackrel{\eqref{calt_hecke_algebra:product}}{=} & 
(\phi(s) \cdot \phi(s'))(l).
\end{eqnarray*}

\begin{lemma}\label{calh(L_rtimes_phi_Z_as_twisted_Laurent_ring}
There is a natural isomorphism of (non-unital) rings
\[
\Xi \colon \calh(L;R,\rho|_L,\omega)_{\phi}[t,t^{-1}] \xrightarrow{\cong} \calh(G;R,\rho,\omega).
\]
\end{lemma}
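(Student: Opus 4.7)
The plan is to set $\Xi\bigl(\sum_n s_n \cdot t^n\bigr)$ equal to the function $\sigma \colon G \to R$ characterized by $\sigma(lt^n) = s_n(l)$; this makes sense since $G = L \rtimes_{c_{g_0}} \IZ$ gives a disjoint decomposition $G = \bigsqcup_{n \in \IZ} L t^n$ into open cosets. First I would verify that $\sigma \in \calh(G;R,\rho,\omega)$: local constancy is inherited from each $s_n$; compactness of $\pr(\supp(\sigma)) \subseteq Q$ follows because only finitely many $s_n$ are non-zero and each $\pr(\supp(s_n))$ is compact in $M$; the left equivariance~\eqref{condition_s_and_omega:left} is transported from $s_n$; and for the right equivariance~\eqref{condition_s_and_omega:right} one uses $lt^n \cdot n_0 = l \cdot c_{g_0}^n(n_0) \cdot t^n$, that $c_{g_0}^n(n_0) \in N$ by normality of $N$, and~\eqref{omega_and_conjugation}.

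For bijectivity the inverse sends $\sigma \in \calh(G;R,\rho,\omega)$ to $\sum_n s_n \cdot t^n$ with $s_n(l) := \sigma(lt^n)$. Each $s_n$ lies in $\calh(L;R,\rho|_L,\omega)$ by the same verifications, and only finitely many are non-zero because the image of the compact set $\pr(\supp(\sigma)) \subseteq Q$ under the projection $Q \to Q/M = \IZ$ is a compact, hence finite, subset of the discrete group $\IZ$. Additivity of $\Xi$ is immediate.

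The substance lies in multiplicativity. By additivity it suffices to show
\[
\Xi(f \cdot t^i) \cdot \Xi(g \cdot t^j) = \Xi\bigl((f \cdot \phi^i(g)) \cdot t^{i+j}\bigr)
\]
for $f,g \in \calh(L;R,\rho|_L,\omega)$ and $i,j \in \IZ$. I evaluate the left side at $lt^n$ via~\eqref{calt_hecke_algebra:product}, choosing an admissible compact open subgroup $K \subseteq L$ small enough that $K$, $c_{g_0}^i(K)$, $c_{g_0}^j(K)$, and $c_{g_0}^{i+j}(K)$ are all admissible for both $f$ and $g$ in $\calh(L)$ (arranged by intersecting a common admissible $K_0$ with finitely many conjugates, mirroring the argument used to make $\phi$ in~\eqref{induced_auto_on_Hecke_algebra_of_L} a ring automorphism), together with a transversal of the form $\bigsqcup_{m \in \IZ} T_L \cdot t^m$ for $G/N\!K$. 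Since $\Xi(f \cdot t^i)$ is supported on $L t^i$ and $\Xi(g \cdot t^j)$ on $L t^j$, a summand indexed by $l' t^m$ is non-zero only when $n+m=i$ and $-m=j$; this forces $n=i+j$, matching the vanishing of the right side off degree $i+j$.

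When $n = i+j$, rewriting $lt^{i+j} \cdot l' t^{-j} = l \cdot c_{g_0}^{i+j}(l') \cdot t^i$ and $(l' t^{-j})^{-1} = c_{g_0}^j(l'^{-1}) \cdot t^j$, then applying the change of variable $l'' := c_{g_0}^{i+j}(l')$ and unfolding $\phi^i(g)(l''^{-1}) = \rho(t^i)\bigl(g(c_{g_0}^{-i}(l''^{-1}))\bigr)$, transforms the sum into exactly the formula~\eqref{calt_hecke_algebra:product} for $(f \cdot \phi^i(g))(l)$ in $\calh(L;R,\rho|_L,\omega)$. The main obstacle is this change of variables: one must verify that $c_{g_0}^{i+j}(T_L)$ is a valid transversal for the appropriate admissible quotient in $L$, and invoke independence of the $\calh(L)$-product from the choice of admissible subgroup and transversal; this is where the preparatory shrinking of $K$ above pays off. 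Naturality of $\Xi$ is then a routine check using the functoriality in $Q$ from Subsection~\ref{subsec:Functoriality_in_Q}.
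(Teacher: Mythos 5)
Your route is the paper's own: the same definition of $\Xi$ on monomials (cf.~\eqref{Xi(st_upper_n)}), the same degree analysis forcing $n=i+j$, the same change of variables by conjugation, and the same slice-wise inverse for bijectivity. The one step that fails as written is the transversal in the multiplicativity check. Putting the power of $t$ on the right is not innocent: since $N$ is normal in $G$ but $K$ need not be normalized by $t$ (and in the covirtually cyclic setting you cannot invoke Lemma~\ref{lem:arranging_K_to_be_normal} to arrange that), one has $l't^m\,N\!K = l'\,N(t^mKt^{-m})\,t^m$, so the cosets of $N\!K$ lying in the slice $Lt^m$ correspond to cosets of $N(t^mKt^{-m})$ in $L$; a single fixed $T_L$ would therefore have to be a simultaneous transversal for all the conjugates $N(t^mKt^{-m})$, $m\in\IZ$, which an arbitrary choice is not. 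Hence $\bigsqcup_{m}T_L\cdot t^m$ is in general not a transversal of $G/N\!K$ and~\eqref{calt_hecke_algebra:product} cannot be applied to it. Relatedly, the resolution of what you yourself call the main obstacle --- that $c_{g_0}^{i+j}(T_L)$ is a transversal of the appropriate quotient --- is not delivered by the preliminary shrinking of $K$: that shrinking only secures admissibility of the conjugated subgroups, while the transversal property depends on which quotient $T_L$ is a transversal of, which you never specify. The paper's device avoids all of this: take representatives $t^{m'}l'$ with the $t$-power on the \emph{left}; then one transversal $T'$ of $L/N\!K$ serves every slice, and after the change of variables $t^nT't^{-n}$ is automatically a transversal of $L/N(t^nKt^{-n})$, which is admissible for the two factors by the preparatory choice. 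Your version is repairable in the same spirit --- only the slice $m=-j$ contributes, so it suffices to choose the representatives there adapted to $N(t^{-j}Kt^{j})$, whose image under $c_{g_0}^{i+j}$ is then a transversal of $L/N(t^{i}Kt^{-i})$ --- but as stated the step is incorrect.

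A second, smaller omission: to recognize the surviving sum as~\eqref{calt_hecke_algebra:product} for $(f\cdot\phi^i(g))(l)$ computed with the admissible subgroup $t^iKt^{-i}$, the prefactor $\mu(\pr(K))$ inherited from the $G$-level product must equal $\mu(\pr(t^iKt^{-i}))$. This is exactly the paper's~\eqref{fdgfiewurhtiv349}, which uses compactness of $M$ and the equality of indices $[M:\pr(K)]=[M:\pr(t^iKt^{-i})]$; it does not follow from ``independence of the product from the choice of admissible subgroup and transversal'' and should be stated. The remaining parts of your proposal --- well-definedness of $\Xi$, verifying~\eqref{condition_s_and_omega:right} directly instead of citing Remark~\ref{rem:redundance}, the finiteness argument for the inverse, and the reduction to monomials --- match the paper and are fine.
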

\begin{proof}
Consider an element $s \in \calh(L;R,\rho|_L,\omega)$ and an element $n \in \IZ$.
Then $\Xi(st^n)$ is  defined to be the element  in $\calh(G;R,\rho,\omega)$ given by 
\begin{equation}
G \to R, \quad (lt^m) \mapsto 
\begin{cases} s(l) & \text{if} \; m = n;
\\
0 & \text{otherwise.}
\end{cases}
\label{Xi(st_upper_n)}
\end{equation}
Obviously the image of the support of $\Xi(st^n)$ under $\pr \colon G \to Q$ is compact, as
it is a closed subset of $t^nM$ and $M \subseteq Q$ is compact.  Suppose that the compact
open subgroup $K \subseteq L$ is admissible for $s$. Then
$K \cap t^{-n}Kt^{n} \subseteq L \subseteq G$ is admissible for $\Xi(st^n)$
by  the following calculation for $l \in L$ and $k \in K \cap t^{-n}Kt^{n}$
\[
  \Xi(st^n)(klt^n) \stackrel{\eqref{Xi(st_upper_n)}}{=}    s(kl)
  \stackrel{\eqref{condition_s_Hecke_algebra:left}}{=}   s(l)
\stackrel{\eqref{Xi(st_upper_n)}}{=}   \Xi(st^n)(lt^n),
\]
and
\[
\Xi(st^n)(lt^nk) 
= 
\Xi(st^n)(lt^nkt^{-n}t^n) 
\stackrel{\eqref{Xi(st_upper_n)}}{=}   
s(lt^nkt^{-n}) 
\stackrel{\eqref{condition_s_Hecke_algebra:right}}{=} 
s(l) 
\stackrel{\eqref{Xi(st_upper_n)}}{=}   
\Xi(st^n)(lt^n)
\]
and the observation that we have
$\Xi(st^n)(lt^mk) = \Xi(st^n)(klt^m) = \Xi(st^n)(lt^m) =
0$ for $m \in \IZ$ with $m \not= n$. 
Next we verify condition~\eqref{condition_s_and_omega:left}. We get for
$z \in N$ and $m \in \IZ$ with $m \not= n$ that $\Xi(st^n)(zlt^m) = 0 = \Xi(st^n)(nlt^m)$
and
\[
  \Xi(st^n)(zlt^n) \stackrel{\eqref{Xi(st_upper_n)}}{=}    s(zl) \stackrel{\eqref{condition_s_and_omega:left}}{=}
  \omega(z) \cdot s(l) \stackrel{\eqref{Xi(st_upper_n)}}{=}   \omega(z) \cdot \Xi(st^n)(lt^n)
\]
hold. Recall that the condition~\eqref{condition_s_and_omega:right} holds
automatically, see Remark~\ref{rem:redundance}. Thus we have shown that $\Xi(st^n)$ is a
well-defined element in $\calh(G;R,\rho,\omega)$.

Define the image under $\Xi$ of an arbitrary element in
$\calh(L;R,\rho|_L,\omega)_{\phi}[t,t^{-1}] $ given by a finite sum
$\sum_{n \in \IZ} s_n t^n$ to be the element $\sum_{n \in \IZ} \Xi(s_n t^n)$ in
$\calh(G;R,\rho,\omega)$.  Obviously $\Xi$ is compatible with the addition. In order
to show that $\Xi$ is compatible with the multiplication, it suffices to show for
$s,s' \in \calh(L;R,\rho|_L,\omega)$, $l \in L$, and $m',n,n' \in \IZ$
\[
\bigl(\Xi(st^n) \cdot \Xi(s't^{n'})\bigr)(lt^m) = \Xi(st^n\cdot s't^{n'})(lt^m).
\]

Fix a compact open subgroup $K \subseteq G$ such that $K$ is admissible for both
$\Xi(st^n)$ and $\Xi(s't^{n'})$ and $t^nKt^{-n}$ is admissible for both $s$ and
$\phi^n(s)$. Consider a transversal $T'$ for the projection $L \to L/N\!K$. Then
$T =\{t^{m'}l' \mid  m' \in \IZ, l' \in T'\}$ is a transversal for the projection
$G \to G/N\!K$ and the map $\IZ \times T' \xrightarrow{\cong} T$ sending $(m',l')$ to
$t^{m'}l'$ is a bijection. Moreover, $t^nT't^{-n}$ is a transversal for the projection
$L \to L/Nt^{n}Kt^{-n}$.  We have
\begin{multline}
  \mu(\pr(t^nKt^{-n})) = \mu(t^n\pr(K)t^{-n})
  \\
  = \frac{\mu(M)}{[M:t^n\pr(K)t^{-n}]}
   \stackrel{\eqref{indexM:pr_versus_M:pr(tkt_upper_(-1)}}{=}  \frac{\mu(M)}{[M:\pr(K)]} = \mu(\pr(K)).
\label{fdgfiewurhtiv349}
\end{multline}
We compute
\begin{eqnarray*}
  \lefteqn{\bigl(\Xi(st^n) \cdot \Xi(s't^{n'})\bigr)(lt^m)}
  &&
  \\
  & \stackrel{\eqref{calt_hecke_algebra:product}}{=} &
  \mu(\pr(K)) \cdot \sum_{g' \in T} \Xi(st^n)(lt^mg') \cdot lt^mg'\Xi(s't^{n'})(g'^{-1})
  \\
  & = & 
        \mu(\pr(K)) \cdot \sum_{l' \in T'} \sum_{m' \in \IZ} \Xi(st^n)(lt^mt^{m'}l')
        \cdot lt^mt^{m'}l'\Xi(s't^{n'})((t^{m'}l')^{-1})
\\
& = &
      \mu(\pr(K)) \cdot \sum_{l' \in T'} \sum_{m' \in \IZ} \Xi(st^n)(lt^{m+m'}l't^{-m-m'}t^{m+m'})
      \cdot lt^{m+m'}l'\Xi(s't^{n'})(l'^{-1}t^{-m'})
\\
& \stackrel{\eqref{Xi(st_upper_n)}}{=} &
\mu(\pr(K)) \cdot \sum_{l' \in T'} \sum_{\substack{m' \in \IZ\\ m+m' = n, -m' = n'}} 
s(lt^{m+m'}l't^{-m-m'}) \cdot  lt^{m+m'}l's'(l'^{-1})
\\
& = &
\begin{cases}
\mu(\pr(K)) \cdot \sum_{l' \in T'} 
s(lt^{n}l't^{-n}) \cdot  lt^nl's'(l'^{-1}) & m = n+n'\\
0 & m \not= n+n'
\end{cases}
\\
& = &
\begin{cases}
\mu(\pr(K)) \cdot \sum_{l' \in T'} 
s(lt^{n}l't^{-n}) \cdot  lt^nl't^{-n}t^ns'(t^{-n}t^nl'^{-1}t^{-n}t^n) & m = n+n'\\
0 & m \not= n+n'
\end{cases}
\\
& = &
\begin{cases}
\mu(\pr(K)) \cdot \sum_{l' \in T'} 
s(lt^{n}l't^{-n}) \cdot  lt^nl't^{-n}\phi^n(s')(t^nl'^{-1}t^{-n}) & m = n+n'\\
0 & m \not= n+n'
\end{cases}
\\
& \stackrel{\eqref{fdgfiewurhtiv349}}{=} &
\begin{cases}
\mu(t^nKt^{-n}) \cdot \sum_{l'' \in t^nT't^{-n}} 
s(ll'') \cdot  ll''\phi^n(s')(l''^{-1}) & m = n+n'\\
0 & m \not= n+n'
\end{cases}
\\
& \stackrel{\eqref{calt_hecke_algebra:product}}{=}  &
\begin{cases}
(s \cdot \phi^n(s'))(l) & m = n+n'\\
0 & m \not= n+n'
\end{cases}
\\
& \stackrel{\eqref{Xi(st_upper_n)}}{=} &
\Xi(s \cdot \phi^n(s') \cdot t^{n+n'})(lt^m) 
\\
& = &
\Xi(st^n \cdot s't^{n'})(lt^m).
\end{eqnarray*}

Obviously $\Xi$ is injective. It remains to show that $\Xi$ is surjective. Any element in
$\calh(G;R,\rho, \omega)$ can be written as a sum of elements $s'$ for which the support
is contained in $Lt^n$ for some $n \in \IZ$. Hence it suffices to show that such $s'$ is in the
image.  Define $s \colon L \to R$ by $s(l) = s'(lt^n)$. Choose $K \in P$ such that both
$K$ and $t^{-n}Kt^n$ are admissible for $s'$. Obviously $K \subseteq L$ and
$t^{-n}Kt^n \subseteq L$. We have for $l \in L$ and $k \in K$ the equality
$s'(klt^n) \stackrel{\eqref{condition_s_Hecke_algebra:left}}{=} s'(lt^n)$, which
implies $s(kl) = s(l)$.  We also have
$ s'(lkt^n) = s'(lt^nt^{-n}kt^n)
\stackrel{\eqref{condition_s_Hecke_algebra:right}}{=} s'(lt^n) $ which implies
$s(lk) = s(l)$. Hence $K$ is admissible for $s$.
Condition~\eqref{condition_s_and_omega:left} follows from the calculation for $z \in N$.
\[
  s(zl) = s'(zlt^n) \stackrel{~\eqref{condition_s_and_omega:left}}{=}
  \omega(z) \cdot s'(lt^n)  = \omega(z) \cdot s(l).
\]

Recall that the condition~\eqref{condition_s_and_omega:right}  holds
automatically, see Remark~\ref{rem:redundance}. 
We conclude that $s$ defines an element in $\calh(L;R,\rho|_L,\omega)$ with $\Xi(st^n) = s'$. 
This finishes the proof of Lemma~\ref{calh(L_rtimes_phi_Z_as_twisted_Laurent_ring}.
\end{proof}

\begin{lemma}\label{lem:idempotent-completions_and_twisted_Laurent_series_non-unital}
Let $\cala$ be a (not necessarily unital) additive category, which is the directed union
$\cala = \bigcup_{i \in I} \cala_i$ of unital additive categories. Let
$\Phi \colon \cala \xrightarrow{\cong} \cala$ be an automorphism of (non-unital) additive
categories.

There is an equivalence of unital additive categories
\[
F \colon \Idem\bigl(\Idem(\cala)_{\Idem(\Phi)}[t,t^{-1}]\bigr) 
\xrightarrow{\simeq} \Idem\bigl(\cala_{\Phi}[t,t^{-1}]\bigr).
\]
\end{lemma}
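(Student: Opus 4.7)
My plan is to construct the functor $F$ explicitly and then verify it is faithful, full, and essentially surjective. On objects $F$ sends $((A, e), p)$ to $(A, p)$, where the Laurent series $p = \sum_k p_k t^k$ — originally living in $\Idem(\cala)_{\Idem(\Phi)}[t, t^{-1}]$ with coefficients $p_k \colon \Phi^k(A) \to A$ in $\cala$ subject to the decoration $e \circ p_k \circ \Phi^k(e) = p_k$ — is reinterpreted as a Laurent series in $\cala_{\Phi}[t, t^{-1}]$ with the same coefficients. This reinterpretation is legitimate because the composition formula in the twisted Laurent construction is identical over $\cala$ and over $\Idem(\cala)$, so the relation $p \circ p = p$ transfers across. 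The definition on morphisms is analogous, forgetting the decoration on source and target.

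Faithfulness is immediate since both morphism sets consist of identical formal Laurent sums with coefficients in $\cala$, constrained by the same relation $p' \circ f \circ p = f$. Fullness is the assertion that every $f = \sum_k f_k t^k \colon (A, p) \to (A', p')$ in the target lifts to a morphism $((A, e), p) \to ((A', e'), p')$ in the source; equivalently, each coefficient must satisfy $e' \circ f_k \circ \Phi^k(e) = f_k$. The key observation is that $e' \circ p'_j \circ \Phi^j(e') = p'_j$, upon left-composing with $e'$ and using $e' \circ e' = e'$, yields $e' \circ p'_j = p'_j$, and symmetrically $p_m \circ \Phi^m(e) = p_m$. Substituting these into the expansion
\[
(p' \circ f \circ p)_k = \sum_{i + j + m = k} p'_j \circ \Phi^j(f_i) \circ \Phi^{i+j}(p_m) = f_k
\]
and factoring $e'$ out on the left (respectively $\Phi^k(e)$ out on the right) produces $e' \circ f_k = f_k$ and $f_k \circ \Phi^k(e) = f_k$, which combine to the required identity.

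I expect the main obstacle to be essential surjectivity, because producing a suitable idempotent $e$ on the source side is precisely where the hypothesis that $\cala$ is a directed union of unital subcategories enters. Given $(A, q)$ in $\Idem(\cala_{\Phi}[t, t^{-1}])$ with $q = \sum_{k \in S} q_k t^k$ supported on a finite set $S$, I would use directedness to choose an index $i \in I$ such that $A$, every $\Phi^k(A)$ and $\Phi^{-k}(A)$, and every $q_k$ and $\Phi^{-k}(q_k)$ (for $k \in S$) lie in the unital subcategory $\cala_i$. Set $e := \id_A^{\cala_i}$, the identity of $A$ in $\cala_i$ viewed as an idempotent in $\cala$. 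Then $e \circ q_k = q_k$ is immediate from $q_k \in \cala_i$ with target $A$; the relation $q_k \circ \Phi^k(e) = q_k$ is equivalent, after applying the automorphism $\Phi^{-k}$, to $\Phi^{-k}(q_k) \circ e = \Phi^{-k}(q_k)$, which holds because $\Phi^{-k}(q_k) \in \cala_i$ has source $A$. Hence $((A, e), q)$ is a legitimate object of the source that $F$ sends strictly to $(A, q)$, giving (in fact, strict) surjectivity on objects and completing the proof.
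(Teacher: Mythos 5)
Your proposal is correct and takes essentially the same approach as the paper: the functor forgets the inner idempotent, fullness is obtained by showing that $p' \circ f \circ p = f$ forces the decoration condition $e' \circ f_k \circ \Phi^k(e) = f_k$ (the paper does this by one formal substitution computation rather than coefficient-wise, but it is the same idea), and strict surjectivity on objects uses the directed-union hypothesis to take $e$ to be the identity of $A$ in a suitable unital subcategory $\cala_i$, exactly as in the paper's proof.
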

\begin{proof}
  Recall that an object in $\Idem(\cala)$ is given by a pair $(A,p)$, where $A$ is an
  object in $\cala$ and $p \colon A \to A$ is a morphism in $\cala$ with $p \circ p = p$.
  Moreover, a morphism $f \colon (A,p) \to (A',p')$ in
  $\Idem(\cala)_{\Idem(\Phi)}[t,t^{-1}]$ is given by a finite sum
  $f = \sum_{j \in \IZ} f_j \cdot t^j$, where
  $f_j \colon \Idem(\Phi)^j(A,p) := (\Phi^j(A),\Phi^j(p)) \to (A',p') $ is a morphism in
  $\Idem(\cala)$.  Hence each $f_j$ is given by a morphism $f_j \colon \Phi^j(A) \to A'$
  satisfying $f_j = p' \circ f_j \circ \Phi^j(p)$. We conclude that the morphism
  $f \colon (A,p) \to (A',p')$ in $\Idem(\cala)_{\Idem(\Phi)}[t,t^{-1}]$ is the same as a
  morphism $f \colon A \to A'$ in $\cala_{\Phi}[t,t^{-1}]$ satisfying
  $(p'\cdot t^0) \circ f \circ (p\cdot t^0) = f$, since we get in $\cala_{\Phi}[t,t^{-1}]$
  \[
   (p'\cdot t^0) \circ f \circ (p\cdot t^0)  
   =  
   \sum_{j\in \IZ}  (p' \cdot t^0) \circ f_j  \cdot t^j \circ (p \cdot t^0)
    = \sum_{j\in \IZ}  \bigl(p' \circ f_j  \cdot \Phi^j(p)\bigr) \cdot t^j.
  \]
  Now an object in $\Idem\bigl(\Idem(\cala)_{\Idem(\Phi)}[t,t^{-1}]\bigr)$ is given by
 $\bigl((A,p),q\bigr)$, where $A$ is an object in $\cala$, $p \colon A \to A$ is a
 morphism in $\cala$ with $p \circ p = p$, and $q \colon (A,p) \to (A,p)$ is a morphism in
 $\Idem(\cala)_{\Idem(\Phi)}[t,t^{-1}]$ satisfying $q \circ q = q$.  The morphism $q$ is
 the same as a morphism $q \colon A \to A$ in $\cala_{\Phi}[t,t^{-1}]$ satisfying 
 $(p\cdot  t^0) \circ q \circ (p\cdot t^0) = q$ and $q \circ q = q$. Hence we can define 
 $F$ on  objects by
\[
F((A,p),q) = (A,q).
\]
Consider two objects $((A,p),q)$ and $((A',p'),q')$. A morphism
$f \colon ((A,p),q) \to ((A',p'),q')$ in
$\Idem\bigl(\Idem(\cala)_{\Idem(\Phi)}[t,t^{-1}]\bigr)$ is the same as a morphism
$f \colon (A,p) \to (A',p')$ in $\Idem(\cala)_{\Idem(\Phi)}[t,t^{-1}]$ satisfying
$q' \circ f \circ q = f$ and therefore the same as a morphism $f \colon A \to A'$ in
$\cala_{\Phi}[t,t^{-1}]$ satisfying $(p' \cdot t^0) \circ f \circ (p\cdot t^0) = f$ and
$q' \circ f \circ q = f$.

Hence we can define $F$ on morphisms by sending the morphism
$f \colon \bigl((A,p),q\bigr) \to \bigl((A',p'),q'\bigr)$ in
$\Idem\bigl(\Idem(\cala)_{\Idem(\Phi)}[t,t^{-1}]\bigr)$ to the morphism
$(A,q) \to (A',q')$ in $\Idem\bigl(\cala_{\Phi}[t,t^{-1}]\bigr)$ given by the morphism
$f \colon A \to A'$ in $\cala_{\Phi}[t,t^{-1}]$.  One easily checks that $F$ is compatible
with composition and sends identity morphisms to identity morphisms.

Next we show that the  map induced by $F$
\begin{multline*}
\mor_{\Idem(\Idem(\cala)_{\Idem(\Phi)}[t,t^{-1}])}\bigl(((A,p),q),((A',p'),q')\bigr)
\\
\to \mor_{\Idem(\cala_{\Phi}[t,t^{-1}])}\bigl((A,q),(A',q')\bigr)
\end{multline*}
is bijective. Obviously it is injective.  In order to show surjectivity, we have to show
for a morphism $f \colon (A,q) \to (A',q')$ in $\Idem(\cala)_{\Idem(\Phi)}[t,t^{-1}]$
satisfying $q' \circ f \circ q = f$ that $(p' \cdot t^0) \circ f \circ (p\cdot t^0) = f$
holds.  This follows from the following computation using
$(p\cdot t^0) \circ q \circ (p\cdot t^0) = q$, $q \circ q = q$, $p \circ p = p$,
$(p'\cdot t^0) \circ q' \circ (p'\cdot t^0) = q'$, $q' \circ q' = q'$, and
$p' \circ p' = p'$,
\begin{multline*}
(p' \cdot t^0) \circ f \circ (p\cdot t^0)
= 
(p' \cdot t^0) \circ q' \circ f \circ q \circ (p\cdot t^0)
\\
=  
(p' \cdot t^0) \circ (p'\cdot  t^0) \circ q' \circ (p'\cdot t^0)
\circ f \circ (p\cdot  t^0) \circ q \circ (p\cdot t^0) \circ (p\cdot t^0)
\\
=  
(p' \cdot t^0)  \circ q' \circ (p'\cdot t^0)   \circ f \circ (p\cdot  t^0) \circ q \circ (p\cdot t^0) 
=  q' \circ f \circ q
= 
f.
\end{multline*}
Finally we show that $F$ is surjective on objects. Consider any object $(A,q)$ in
$\Idem\bigl(\cala_{\Phi}[t,t^{-1}]\bigr)$. In order to show that $(A,q)$ is in the image
of $F$, we have to construct a morphism $p \colon A \to A$ in $\cala$ such that
$p \circ p = p$ holds in $\cala$ and $(p\cdot t^0) \circ q \circ (p\cdot t^0) = q$ holds in
$\cala_{\Phi}[t,t^{-1}]$.

We can write $q$ as a finite sum $q = \sum_{j\in \IZ } q_j \cdot t^j$ for morphism
$q_j \colon \Phi^j(A) \to A$ in $\cala$. Since $\cala$ is the directed union
$\bigcup_{i \in I} \cala_i$ of the unital subcategories $\cala_i$, we can find an index
$i_0 \in I$ such that for each $j \in \IZ$ with $q_j \not= 0$ and hence for all $j \in J$
the morphisms $q_j$ and $\Phi^{-j}(q_j)$ belong to $\cala_{i_0}$.  Let $p \in \cala_{i_0}$
be the identity morphism of the object $A$ in $\cala_{i_0}$. Then we get $p \circ p = p$,
$p \circ q_j = q_j$, and $\Phi^{-j}(q_j) \circ p = \Phi^{-j}(q_j)$ in $\cala$ for all
$j \in \IZ$. Now we compute
\begin{multline*}
(p\cdot t^0) \circ q \circ (p\cdot t^0) 
= 
(p\cdot t^0) \circ \left(\sum_{j\in \IZ } q_j \cdot t^j \right) \circ (p\cdot t^0) 
\\
= 
\sum_{j\in \IZ } (p\cdot t^0) \circ (q_j \cdot t^j) \circ (pt^0) 
=  
\sum_{j\in \IZ } (p\circ q_j \cdot \Phi^j(p)) \cdot t^j
=  
\sum_{j\in \IZ } (q_j \cdot \Phi^j(p)) \cdot t^j
\\
=  
\sum_{j\in \IZ } \Phi^j(\Phi^{-j}(q_j )\cdot p) \cdot t^j
= 
\sum_{j\in \IZ } \Phi^j(\Phi^{-j}(q_j)) \cdot t^j
=  
\sum_{j\in \IZ } q_j\cdot t^j 
= q.
\end{multline*}
This finishes the proof of
Lemma~\ref{lem:idempotent-completions_and_twisted_Laurent_series_non-unital}
\end{proof}

The next lemma allows to reduced the computation of the algebraic $K$-theory of the
non-unital ring $\calh(G;R,\rho,\omega)$ to the calculation of the algebraic
$K$-theory of a unital additive category given by the twisted finite Laurent category of
an automorphism of a unital additive category.  The main advantage will be that for such a
category Bass-Heller-Swan decompositions will be available.

\begin{lemma}\label{lem:K-theory_of_virt_Z_reduced_to_Laurent}
There is a weak equivalence 
\[
\bfKinfty\bigl(\Idem(\underline{ \calh(L;R,\rho|_L,\omega)}_{\oplus})_{\Idem(\underline{\phi}_{\oplus})}[t,t^{-1}]\bigr)
\xrightarrow{\simeq}
\bfKinfty\bigl(\calh(G;R,\rho,\omega)\bigr). 
\]
\end{lemma}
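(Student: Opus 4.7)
The plan is to chain together the non-unital swap lemma (Lemma~\ref{lem:idempotent-completions_and_twisted_Laurent_series_non-unital}) with the ring isomorphism $\Xi$ of Lemma~\ref{calh(L_rtimes_phi_Z_as_twisted_Laurent_ring}, the identification~\eqref{underline_and_Laurent}, and the fact that the inclusion $\eta$ of a unital additive category into its idempotent completion induces a weak equivalence on non-connective $K$-theory. The crux is to verify the non-unital hypothesis of Lemma~\ref{lem:idempotent-completions_and_twisted_Laurent_series_non-unital}; once this is done, everything else is formal.

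First I would invoke Lemma~\ref{lem:idempotent-completions_and_twisted_Laurent_series_non-unital} with $\cala := \underline{\calh(L;R,\rho|_L,\omega)}_{\oplus}$ and $\Phi := \underline{\phi}_{\oplus}$, where $\underline{\phi}$ denotes the automorphism of $\IZ$-categories induced by~\eqref{induced_auto_on_Hecke_algebra_of_L}. The hypothesis that $\cala$ is the directed union of unital additive subcategories is provided by Lemma~\ref{lem:calh(H)_as_union}: since passing to additive completion commutes with directed unions,
\[
\underline{\calh(L;R,\rho|_L,\omega)}_{\oplus} \;=\; \bigcup_{K \in P}\, \underline{\calh(L//K;R,\rho|_L,\omega)}_{\oplus},
\]
and each subcategory on the right is unital with unit induced by the ring unit $1_K \in \calh(L//K;R,\rho|_L,\omega)$. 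The lemma then supplies an equivalence of unital additive categories
\[
\Idem\bigl(\Idem(\underline{\calh(L;R,\rho|_L,\omega)}_{\oplus})_{\Idem(\underline{\phi}_{\oplus})}[t,t^{-1}]\bigr) \xrightarrow{\simeq} \Idem\bigl(\underline{\calh(L;R,\rho|_L,\omega)}_{\oplus,\underline{\phi}_{\oplus}}[t,t^{-1}]\bigr).
\]

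Second, I would identify the target. The natural compatibility $(\calc_{\oplus})_{\Phi}[t,t^{-1}] \cong (\calc_{\Phi}[t,t^{-1}])_{\oplus}$ for any $\IZ$-category $\calc$ with automorphism $\Phi$ (both sides are described by matrices of twisted Laurent polynomials of morphisms in $\calc$), combined with the isomorphism~\eqref{underline_and_Laurent} applied to the ring $\calh(L;R,\rho|_L,\omega)$ with automorphism $\phi$, and with the ring isomorphism $\Xi$ of Lemma~\ref{calh(L_rtimes_phi_Z_as_twisted_Laurent_ring}, yields a chain of isomorphisms of $\IZ$-categories
\[
\underline{\calh(L;R,\rho|_L,\omega)}_{\oplus,\underline{\phi}_{\oplus}}[t,t^{-1}] \;\cong\; \bigl(\underline{\calh(L;R,\rho|_L,\omega)_{\phi}[t,t^{-1}]}\bigr)_{\oplus} \;\cong\; \underline{\calh(G;R,\rho,\omega)}_{\oplus}.
\]
Applying $\Idem(-)$ and $\bfKinfty$ to the composite of this isomorphism with the equivalence produced in the previous step gives a weak equivalence
\[
\bfKinfty\bigl(\Idem\bigl(\Idem(\underline{\calh(L;R,\rho|_L,\omega)}_{\oplus})_{\Idem(\underline{\phi}_{\oplus})}[t,t^{-1}]\bigr)\bigr) \xrightarrow{\simeq} \bfKinfty(\Idem(\underline{\calh(G;R,\rho,\omega)}_{\oplus})),
\]
whose target equals $\bfKinfty(\calh(G;R,\rho,\omega))$ by Definition~\ref{def:Algebraic_K-theory_of_Z-categories}.

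Finally, applying the weak equivalence $\bfKinfty(\eta(\calb))\colon \bfKinfty(\calb) \xrightarrow{\simeq} \bfKinfty(\Idem(\calb))$ to $\calb := \Idem(\underline{\calh(L;R,\rho|_L,\omega)}_{\oplus})_{\Idem(\underline{\phi}_{\oplus})}[t,t^{-1}]$ allows the outermost $\Idem$ on the left to be stripped. Composing these two weak equivalences produces the asserted map. The only non-routine point in the whole argument is verifying the directed-union hypothesis of Lemma~\ref{lem:idempotent-completions_and_twisted_Laurent_series_non-unital}; since the real technical content has already been handled in Lemma~\ref{lem:idempotent-completions_and_twisted_Laurent_series_non-unital} and Lemma~\ref{calh(L_rtimes_phi_Z_as_twisted_Laurent_ring}, no further obstacle arises.
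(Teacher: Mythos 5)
Your proposal is correct and follows essentially the same route as the paper's own proof: apply Lemma~\ref{lem:idempotent-completions_and_twisted_Laurent_series_non-unital} to $\underline{\calh(L;R,\rho|_L,\omega)}_{\oplus}$ with the directed-union hypothesis supplied by Lemma~\ref{lem:calh(H)_as_union}, identify $(\underline{\calh(L;R,\rho|_L,\omega)}_{\oplus})_{\underline{\phi}_{\oplus}}[t,t^{-1}]$ with $\underline{\calh(G;R,\rho,\omega)}_{\oplus}$ via~\eqref{underline_and_Laurent} and the isomorphism $\Xi$ of Lemma~\ref{calh(L_rtimes_phi_Z_as_twisted_Laurent_ring}, and strip the outer $\Idem$ using the weak equivalence $\bfKinfty(\calb)\to\bfKinfty(\Idem(\calb))$ for unital $\calb$. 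Your explicit remark that $(-)_{\oplus}$ commutes with the twisted Laurent construction is a point the paper leaves implicit, but otherwise the arguments coincide.
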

\begin{proof} Recall that for a unital additive category $\calb$ the obvious map
  $\bfKinfty(\calb) \to \bfKinfty(\Idem(\calb))$ is a weak homotopy equivalence. We can apply
Lemma~\ref{lem:idempotent-completions_and_twisted_Laurent_series_non-unital}  to
$\cala = \underline{ \calh(L;R,\rho|_L,\omega)}_{\oplus}$ and the automorphism
$\underline{\phi}_{\oplus}$ because of Lemma~\ref{lem:calh(H)_as_union}. Hence we obtain a
weak equivalence
\begin{multline*}
\bfKinfty\bigl(\Idem(\underline{\calh(L;R,\rho|_L,\omega)}_{\oplus})_{\Idem(\underline{\phi}_{\oplus})}[t,t^{-1}]\bigr)
\\
\xrightarrow{\simeq}
\bfKinfty\bigl(\Idem\bigl((\underline{\calh(L;R,\rho|_L,\omega)}_{\oplus})_{\underline{\phi}_{\oplus}}[t,t^{-1}]\bigr)\bigr).
\end{multline*}
The (non-unital) additive category
$(\underline{
  \calh(L;R,\rho|_L,\omega)}_{\oplus})_{\underline{\phi}_{\oplus}}[t,t^{-1}]$ is
isomorphic to the (non-unital) additive category
$\bigl(\underline{ \calh(L;R,\rho|_L,\omega)_{\phi}[t,t^{-1}]}\bigr)_{\oplus}$
by~\eqref{underline_and_Laurent}, and hence by
Lemma~\ref{calh(L_rtimes_phi_Z_as_twisted_Laurent_ring} to the (non-unital) additive
category $\underline{\calh(G;R,\rho, \omega)}_{\oplus}$.  Hence we obtain a weak homotopy
equivalence
\[\bfKinfty\bigl(\Idem\bigl((\underline{ \calh(L;R,\rho|_L,\omega)}_{\oplus})_{\underline{\phi}_{\oplus}}[t,t^{-1}]\bigr)\bigr)
\xrightarrow{\simeq} 
\bfKinfty\bigl(\Idem\bigl(\underline{\calh(G;R,\rho,\omega)}_{\oplus}\bigr)\bigr).
\]
\end{proof}


\typeout{----- Section 6: A review of the twisted  Bass-Heller-Swan decomposition for additive categories  -------}

\section{A review of the twisted Bass-Heller-Swan decomposition for unital additive categories}%
\label{sec:A_review_of_the_Bass-Heller-Swan_decomposition_for_unital_additive_categories}

In this section additive category means always a small unital additive
category and functors are assumed to respect identity morphisms. The same is true for rings.

The following definitions are  taken from~\cite[Definition~6.1]{Bartels-Lueck(2020additive)}.

\begin{definition}[Regularity properties of rings]\label{def:Regularity_properties_of_rings}

Let $l$ be a natural number.

\begin{enumerate}

\item\label{def:Regularity_properties_of_rings;Noetherian}
We call $R$ \emph{Noetherian}, if  any $R$-submodule of a finitely generated
$R$-module is again finitely generated;

\item\label{def:Regularity_properties_of_rings:regular_coherent}
We call $R$  \emph{regular coherent}, if  every finitely presented
$R$-module $M$ is of type $\FP$;

\item\label{def:Regularity_properties_of_rings:l-uniformly_regular_coherent}
We call $R$  \emph{$l$-uniformly regular coherent}, if  every finitely presented
$R$-module $M$ admits an $l$-dimensional finite projective
  resolution, i.e., there exist an exact sequence
  $0 \to P_l \to P_{l-1} \to \cdots \to P_0 \to M \to 0$ such that each $P_i$ is finitely
  generated projective;

\item\label{def:Regularity_properties_of_rings:regular}
We call $R$ \emph{regular}, if  it is Noetherian and regular coherent;

\item\label{def:Regularity_properties_of_ringss:l-uniformly_regular}
  We call $R$  \emph{$l$-uniformly regular}, if  it is Noetherian and $l$-uniformly regular coherent.

\end{enumerate}
\end{definition}

These notions are generalized to additive categories
in~\cite[Section~6]{Bartels-Lueck(2020additive)} in such a way that they reduce in the
special case $\cala = \underline{R}$ to the ones appearing in
Definition~\ref{def:Regularity_properties_of_rings}. Therefore the precise definitions for
additive categories are not needed to comprehend the material of this paper.

The following result follows from~\cite[Theorem~7.8 and Theorem~10.1]{Bartels-Lueck(2020additive)}.

\begin{theorem}[The non-connective $K$-theory of additive categories]%
\label{the:The_non_connective_K-theory_of_additive_categories}
Let $\cala$ be an  additive category.  Suppose that $\cala$
is regular.  Consider any automorphism
$\Phi \colon \cala \xrightarrow{\cong} \cala$ of additive categories.

Then we get  a weak homotopy equivalence of non-connective spectra
\[
\bfa^{\infty}  \colon \bfT_{\bfKinfty(\Phi^{-1})}  \xrightarrow{\simeq}  \bfKinfty(\cala_{\Phi}[t,t^{-1}]),
\]
where $\bfT_{\bfKinfty(\Phi^{-1})} $ is the mapping torus of the map of spectra 
$\bfKinfty(\Phi) \colon \bfKinfty(\cala ) \to \bfKinfty(\cala)$
induced by $\Phi$.
\end{theorem}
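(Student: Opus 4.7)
The plan is to derive the theorem from the twisted Bass-Heller-Swan decomposition for additive categories together with the vanishing of the Nil-terms in the regular case. For any unital additive category $\cala$ with an automorphism $\Phi$, the general twisted Bass-Heller-Swan theorem (in the setting of additive categories, as developed in Lueck-Steimle(2016BHS) and Bartels-Lueck(2020additive)) produces a natural splitting of non-connective $K$-theory spectra of the form
\[
\bfKinfty(\cala_{\Phi}[t,t^{-1}]) \simeq \bfT_{\bfKinfty(\Phi^{-1})} \vee \bfNKinfty_+(\cala,\Phi) \vee \bfNKinfty_-(\cala,\Phi),
\]
where the mapping torus summand is receiver of the natural assembly map $\bfa^\infty$. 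This assembly is built from two ingredients: the canonical map $\bfKinfty(\cala) \to \bfKinfty(\cala_{\Phi}[t,t^{-1}])$ induced by the inclusion, and the fact that multiplication by $t$ and by $t^{-1}$ furnishes a self-homotopy of this map which, on $\pi_*$, realises $\bfKinfty(\Phi^{-1})$. Together these data provide a canonical extension of the inclusion to a map from the mapping torus $\bfT_{\bfKinfty(\Phi^{-1})}$ into $\bfKinfty(\cala_{\Phi}[t,t^{-1}])$.

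The strategy to actually establish the splitting is to mimic Bass' original argument in the additive setting. First one proves the negative part: a separate non-connective delooping argument (as in Pedersen-Weibel(1985) or Lueck-Steimle(2014delooping)) reduces the claim in negative degrees to statements in non-negative degrees at the cost of replacing $\cala$ by a germs-at-infinity category, which is again regular if $\cala$ is. Second, one performs a Mayer-Vietoris decomposition of $\cala_{\Phi}[t,t^{-1}]$ using the two one-sided twisted polynomial subcategories $\cala_{\Phi}[t]$ and $\cala_{\Phi^{-1}}[t]$, whose intersection is $\cala$. The Waldhausen-style pushout, combined with the fundamental theorem for twisted polynomial extensions, then produces the splitting with the $\bfNKinfty_\pm(\cala,\Phi)$ correcting the failure of the inclusions $\cala \hookrightarrow \cala_{\Phi^{\pm 1}}[t]$ to induce $K$-equivalences.

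The main obstacle, and the point at which regularity is used, is showing that the two Nil-summands vanish. One shows that every $\cala$-module of finite type over $\cala_{\Phi^{\pm 1}}[t]$ admits a finite projective resolution by $\cala_{\Phi^{\pm 1}}[t]$-modules of finite type that themselves come from $\cala$; this is the additive-category analogue of the classical resolution theorem used to prove $\NK_*(R)=0$ for regular $R$. The correct formulations of the notions of finitely presented module, resolution, and regularity at the level of additive categories (via the presheaf category $\operatorname{\matheurm{MOD}}\text{-}\cala$) are exactly those set up in Section 6 of Bartels-Lueck(2020additive), and the required vanishing is proved there as Theorem 7.8 in the non-negative range and upgraded to the full non-connective statement by the delooping argument behind Theorem 10.1.

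Once the Nil-terms are killed, the splitting collapses to a weak equivalence $\bfT_{\bfKinfty(\Phi^{-1})} \xrightarrow{\simeq} \bfKinfty(\cala_{\Phi}[t,t^{-1}])$, which is the asserted map $\bfa^\infty$. Thus the proof in this paper amounts to invoking these two theorems of Bartels-Lueck(2020additive) and checking that the map they produce is the mapping-torus assembly map in the stated form.
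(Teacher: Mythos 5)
Your proposal follows essentially the same route as the paper: the paper gives no independent argument for Theorem~\ref{the:The_non_connective_K-theory_of_additive_categories} but simply quotes it as a consequence of Theorem~7.8 and Theorem~10.1 of \cite{Bartels-Lueck(2020additive)}, i.e.\ the twisted Bass--Heller--Swan splitting together with the vanishing of the Nil-terms (and the negative-degree extension) for regular additive categories, which is exactly the reduction you perform. Your additional sketch of how those cited results are themselves proved goes beyond what the paper records but does not change the approach.
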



\typeout{--- Section 7:  Hecke algebras  over compact td-groups and crossed product rings ----}

\section{Hecke algebras over compact td-groups and crossed product rings}%
\label{sec:Hecke_algebras_over_compact_td-groups_and_crossed_product_rings}

Let $G$, $N$, $Q := G/N$, $\pr \colon G \to Q$, $R$, $P$, $\rho$, $\omega$, and
$\mu$ be as in Subsection~\ref{subsec:normal_characters}
and denote by $\calh(G;R,\rho,\omega)$ the Hecke algebra, which we have
introduced in Subsection~\ref{subsec:The_construction_of_the_Hecke_algebra}. Our
main assumption in this section will be that $Q$ is compact.

\begin{definition}\label{def_locally_central} We call a subgroup $N \subseteq G$
  \emph{locally central}, if the centralizer $C_GN$ of $N$ in $G$ is an open subgroup.
\end{definition}

The main result of this section is

\begin{theorem}\label{the:regular_coherence_of_the_Hecke_algebra}
  
  Suppose that $Q$ is compact and $N$ is locally central. Let $l$ be a natural number.
  Let $R$ be a unital ring with $\IQ \subseteq R$ such that $R$ is
  $l$-uniformly regular or regular respectively.
  
  Then the additive category
  $\Idem\bigl(\underline{\calh(G;R,\rho,\omega)[\IZ^m]}_{\oplus}\bigr)$
  is $(l+2m)$-uniformly regular or regular respectively for all $m \ge 0$. 
\end{theorem}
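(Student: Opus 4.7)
The plan is to reduce via the filtered-union structure of $\calh(G;R,\rho,\omega)$ to a finite twisted crossed-product description, then to apply Maschke's theorem (using $\IQ \subseteq R$) and the additive-category Bass--Heller--Swan machinery of Section~\ref{sec:A_review_of_the_Bass-Heller-Swan_decomposition_for_unital_additive_categories} to handle the $\IZ^m$-factor.

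First, by Lemma~\ref{lem:calh(H)_as_union}, the (non-unital) ring $\calh(G;R,\rho,\omega)$ is the filtered union $\bigcup_{K \in P} \calh(G//K;R,\rho,\omega)$ of its unital subrings, and this presentation is preserved by $(-)[\IZ^m]$:
\[
\calh(G;R,\rho,\omega)[\IZ^m] \;=\; \bigcup_{K \in P} \calh(G//K;R,\rho,\omega)[\IZ^m].
\]
Both regular coherence and $l$-uniform regular coherence are preserved under filtered unions of unital subrings, because any finitely presented module over the union is induced from a finitely presented module over some stage $K$ together with its (finite) projective resolution, and the notions in Definition~\ref{def:Regularity_properties_of_rings} are unaffected by the passage $S \mapsto \Idem(\underline{S}_\oplus)$ by the generalities of Section~\ref{sec:Z-categories_additive_categories_and_idempotent_completions}. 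It therefore suffices to prove the $(l+2m)$-uniform regularity (resp.\ regularity) for each $\Idem(\underline{\calh(G//K;R,\rho,\omega)[\IZ^m]}_\oplus)$ individually.

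Fix $K \in P$. Since $Q$ is compact and $\pr(K) \subseteq Q$ is open, the double-coset space $\pr(K) \backslash Q / \pr(K)$ is finite. Using that $N$ is locally central, replace $K$ by $K \cap C_G N \cap \ker(\rho) \cap \ker(\omega|_{N})$; this is again an element of $P$, remains admissible, and has the additional properties that $K$ centralizes $N$ and that $\omega|_{N \cap K} = 1$. A direct computation with the multiplication formula \eqref{calt_hecke_algebra:product} then identifies $\calh(G//K;R,\rho,\omega)$ as a twisted crossed product $R \ast_\sigma \Gamma$, where $\Gamma$ is the finite group (or groupoid) encoding the double-coset structure on $NK \backslash G / NK$, and $\sigma$ is a cocycle assembled from $\rho$, $\omega$, and choices of representatives. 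Because $\IQ \subseteq R$, the order of $\Gamma$ is invertible in $R$, so a Maschke-type argument (Morita equivalence) reduces the regularity of $R \ast_\sigma \Gamma$ to that of finitely many algebras of the form $R^\alpha$ (twists of $R$ by automorphisms), each of which inherits $l$-uniform regularity (resp.\ regularity) directly from $R$.

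Finally, adjoining $\IZ^m$ yields an iterated twisted Laurent extension whose regularity dimension is bounded by applying the additive-category Bass--Heller--Swan decomposition (Theorem~\ref{the:The_non_connective_K-theory_of_additive_categories} together with its regularity companions in~\cite{Bartels-Lueck(2020additive)}) a total of $m$ times; each $\IZ$-factor contributes at most $2$ to the uniform regularity dimension, giving the bound $l + 2m$, and the argument is stable under passage to $\Idem(\underline{-}_\oplus)$. The main obstacle will be the middle step: making precise the identification of $\calh(G//K;R,\rho,\omega)$ as a finite twisted crossed product over $R$, and checking that the twist built from $\rho$ and $\omega$ assembles into an honest cocycle for which the Maschke/Morita reduction applies. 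The locally central hypothesis on $N$ is essential here, as it is exactly what allows the $N$-twist to be absorbed into a finite-group twist; without it, the finiteness needed to invoke Maschke is lost.
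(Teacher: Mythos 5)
Your overall shape (reduce to finite stages, identify those as crossed products with a finite group, use $\IQ\subseteq R$ to transfer regularity from $R$, then add $2$ per Laurent variable via the regularity results of~\cite{Bartels-Lueck(2020additive)}) is the paper's strategy, but two steps have genuine gaps. First, your reduction to the stages rests on the assertion that (uniform) regular coherence passes to the filtered union $\bigcup_{K\in P}\calh(G//K;R,\rho,\omega)$ because ``a finitely presented module and its resolution are induced from some stage''. Inducing a finite projective resolution from a stage stays exact only if the big algebra is flat over that stage, which you never address (and the stages have different units while $\calh(G;R,\rho,\omega)$ is non-unital, so even formulating the claim needs care). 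The paper avoids this issue altogether: after choosing a nested sequence $G=K_0\supseteq K_1\supseteq\cdots$ of \emph{normal} subgroups in $P$ with trivial intersection (Lemma~\ref{lem:arranging_K_to_be_normal} -- this is where compactness of $Q$ and local centrality of $N$ really enter), the elements $1_{K_n}$ are central idempotents, $\calh(G)$ decomposes as the direct sum of the unital corner rings $(1_n-1_{n-1})\calh(G)(1_n-1_{n-1})$ (Lemma~\ref{lem:decomposing_calh(G;R,rho,omega)}), and regularity of a direct sum of additive categories is checked summand-wise (Lemma~\ref{regularity_and_direct-sums}). If you insist on the colimit formulation, you would need the splitting and faithful flatness of the inclusions between stages (Lemma~\ref{lem:criterion_for_flatness_for_Hecke_algebras}), which again requires normal $K$ -- exactly the structural input your argument omits.

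Second, the crossed-product identification is not available for the $K$ you work with: intersecting $K$ with $C_GN$, $\ker(\rho)$ and $\ker(\omega|_N)$ does not make $K$ normal in $G$, and for non-normal $K$ the double-coset algebra $\calh(G//K;R,\rho,\omega)$ is a genuine Hecke algebra (Iwahori--Hecke type) and not a crossed product $R\ast_\sigma\Gamma$ of $R$ with a finite group, so your Maschke step has nothing to apply to. One needs cofinally many $K$ that are normal in $G$ (again Lemma~\ref{lem:arranging_K_to_be_normal}), and then $\calh(G//K;R,\rho,\omega)\cong R\ast D$ with $D=G/N\!K$ finite (Lemma~\ref{lem:twisted_Hecke_algebra_over_K-as_crossed_product}). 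Moreover, the claimed Maschke/Morita reduction of $R\ast_\sigma\Gamma$ to finitely many twists $R^{\alpha}$ of $R$ is false in general; what invertibility of $|D|$ actually yields is the averaging split injection $M\to R\ast D\otimes_R j^*M$, so every finitely presented $R\ast D$-module is a direct summand of a module induced from $R$, and (uniform) regularity descends this way (Lemma~\ref{lem:regular_passes_to_finite_crossed_product_rings}). With these two repairs your final step, quoting the additive-category regularity statements of~\cite{Bartels-Lueck(2020additive)} to gain $2$ in the bound per $\IZ$-factor, is the same as the paper's.
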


For the purpose of this paper we need Theorem~\ref{the:regular_coherence_of_the_Hecke_algebra}
only for the property regular, but for later applications it will be crucial to consider
the property $l$-uniformly regular as well. The point will be that the property
$l$-uniformly regular is compatible with infinite products of additive categories,
in contrast to the property regular.


\subsection{Existence of normal $K \in P$}\label{subsec:existence_of_normal_K_in_P}

\begin{lemma}\label{lem:arranging_K_to_be_normal}
  Suppose that $Q$ is compact and $N$ is locally central.

  Then for every compact open subgroup  $K \subseteq G$ there exists a compact open subgroup
  $K' \subseteq G$ such that $K' \subseteq K$,  $K' \subseteq C_GN$,  and $K'$ is normal in $G$.
 \end{lemma}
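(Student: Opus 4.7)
\medskip
\noindent\emph{Proof proposal.}
The plan is to first arrange $K \subseteq C_G N$ by replacing $K$ with $K \cap C_G N$; this is again a compact open subgroup of $G$ since $C_G N$ is open by the locally central hypothesis. After this reduction it suffices to build a compact open subgroup of $G$ that is normal in $G$ and contained in $K$.

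The key observation is that once $K \subseteq C_G N$, every $n \in N$ commutes with every $k \in K$, so $nKn^{-1} = K$. Consequently $N \subseteq N_G K$, and therefore $N_G K \supseteq N\cdot K$. This gives
\[
[G : N_G K] \;\le\; [G : NK] \;=\; [Q : \pr(K)],
\]
and the right-hand side is finite because $\pr(K)$ is open in the compact td-group $Q$, hence of finite index. So $K$ has only finitely many $G$-conjugates $g_1 K g_1^{-1}, \dots, g_r K g_r^{-1}$.

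Now set
\[
K' \;:=\; \bigcap_{g \in G} g K g^{-1} \;=\; \bigcap_{i=1}^{r} g_i K g_i^{-1}.
\]
As a finite intersection of compact open subgroups of $G$, $K'$ is itself compact open; by construction it is normal in $G$ and contained in $K$, and since $K \subseteq C_G N$ we also have $K' \subseteq C_G N$.

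No step here is a serious obstacle: the only nontrivial input is the implication ``$K \subseteq C_G N \Rightarrow N \subseteq N_G K$'', which is immediate from the definition of the centralizer. The role of the compactness of $Q$ is precisely to force the orbit $G/N_G K$ to be finite, which is what makes the standard ``intersect all conjugates'' construction yield an \emph{open} normal subgroup rather than merely a closed one.
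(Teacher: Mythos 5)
Your proof is correct and follows essentially the same route as the paper: replace $K$ by $K\cap C_GN$, observe that $N$ (hence $NK$) normalizes it, deduce from compactness of $Q$ that it has only finitely many $G$-conjugates, and take their intersection. The paper phrases the finiteness via a finite transversal of $G \to G/N\!K$ rather than via $[G:N_GK]$, but the argument is the same.
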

 \begin{proof}
   Put $L = K \cap C_GN$. Then $L$ is a compact open subgroup of $G$ satisfying
   $L \subseteq K$ and $L \subseteq C_GN$. Choose a transversal $T$ of the projection
   $G \to G/N\!L = Q/\pr(L)$.  Define $K' = \bigcap_{t \in t}tLt^{-1}$. Since $Q/\pr(L)$
   is compact and discrete, the set $T$ is finite. Hence $K'\subseteq G$ is again compact
   open. We get for $n \in N$ and $l \in L$
   \[
   (tnl)L(tnl)^{-1} = tnlLl^{-1}n^{-1}t^{-1} = tnLn^{-1}t^{-1} \stackrel{L \subseteq C_GN}{=} tLt^{-1}.
  \]
  This implies $K' = \bigcap_{g \in G}gLg^{-1}$. Hence $K' \subseteq G$ is a compact open
  normal subgroup and obviously satisfies $K' \subseteq K$ and $K' \subseteq C_GN$.  
\end{proof}


\subsection{Crossed products of finite groups and regularity}
\label{subsec:Crossed_products_of_finite_groups_and_regularity}

Let $R$ be a unital ring and $D$ be a (discrete) group.  Recall that a \emph{crossed
  product ring $R \ast D$} is a unital ring, which is a free left $R$-module with an
$R$-basis $\{b_d \mid d \in D\}$ indexed by the elements in $D$ such that $b_e$ is the
unit in $R \ast D$, for $d_1,d_2 \in D$ there is a unit $w(d_1,d_2) \in R^{\times}$
satisfying $b_{d_1d_2} = w(d_1,d_2) \cdot b_{d_1} \cdot b_{d_2}$, and for $r \in R$ and
$d \in D$ there exists $c_d(r) \in R$ with $c_d(r) \cdot b_d = b_d \cdot (r \cdot b_e)$,
where $c_e(r) = r$ is required for $r \in R$.  In particular each element $b_d$ has an
inverse $b_d^{-1}$ in $R \ast D$, (which is \emph{not} given by $b_{d^{-1}}$ in general,)
and there is an inclusion of rings $R \to R \ast D$ sending $r$ to $r \cdot b_e$.

The notion of crossed product ring is a generalization of the notion of a twisted group ring,
which is the special case, where $w$ is trivial.  For
more details we refer for instance to~\cite[Section~4]{Bartels-Lueck(2009coeff)}
or~\cite[Section~6]{Bartels-Reich(2007coeff)}.

\begin{lemma}\label{lem:regular_passes_to_finite_crossed_product_rings}
  Let $R$ be a ring with $\IQ \subseteq R$ and $D$ be a finite group. Let $R \ast D$ be a
  crossed product ring.
\begin{enumerate}
\item\label{lem:regular_passes_to_finite_crossed_product_rings:modules} Let $M$ be any
  $R \ast D$-module. Let $j \colon R \to R \ast D$ be the canonical inclusion of rings.
  Then we obtain $R \ast D$-homomorphisms
\begin{align*}
  i \colon M \to R \ast D \otimes_R j^*M, &
                   \quad x \mapsto \sum_{d \in D} \frac{1}{|D|} \cdot b_d \otimes b_d^{-1}\cdot x;
\\
  p \colon R \ast D \otimes_R j^*M \to M,
                                          & \quad u \otimes y \mapsto u \cdot y,
\end{align*}
satisfying $p \circ i = \id_M$, where $b_d^{-1}$ denotes  the inverse of $b_d$ in $R \ast D$;

\item\label{lem:regular_passes_to_finite_crossed_product_rings:regular} 
  If $R$ is regular, then $R \ast D$ is regular;

\item\label{lem:regular_passes_to_finite_crossed_product_rings:uniformly_regular} 
  If $R$ is $l$-uniformly regular, then $R \ast D$ is $l$-uniformly regular;

\item\label{lem:regular_passes_to_finite_crossed_product_rings:semi-simple} 
If $R$ is semi-simple, then $R \ast D$ is semi-simple.
\end{enumerate}
\end{lemma}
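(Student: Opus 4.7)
The whole lemma rests on part~\ref{lem:regular_passes_to_finite_crossed_product_rings:modules}, the classical averaging trick, so that is where I would start. Define $i$ and $p$ as in the statement. The map $p$ is plainly $R\ast D$-linear and well defined over $\otimes_R$ because multiplication inside $R\ast D$ already extends the $R$-action on $j^\ast M$. The non-trivial check is that $i$ is $R\ast D$-linear: for $b_{d'}\in R\ast D$ and $x\in M$, reindex the sum defining $i(b_{d'}x)$ via $d\mapsto d'^{-1}d$ and use the crossed-product identities $b_{d'}b_{d'^{-1}d}=w(d',d'^{-1}d)b_d$ and the definition of $b_d^{-1}$ to match $b_{d'}\cdot i(x)$; $R$-linearity reduces to the fact that $j(r)=r\cdot b_e$ commutes with the formal tensor. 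The identity $p\circ i=\id_M$ is then immediate from $\sum_{d\in D}\tfrac{1}{|D|}b_d\cdot b_d^{-1}\cdot x=x$, which uses precisely that $|D|$ is invertible in $R$, guaranteed by $\IQ\subseteq R$.

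The remaining parts all run on the same template. Let $M$ be a finitely presented $R\ast D$-module. Since $R\ast D$ is free of rank $|D|$ as a left $R$-module, $j^\ast M$ is finitely generated over $R$; and as $R$ is Noetherian in cases~\ref{lem:regular_passes_to_finite_crossed_product_rings:regular} and~\ref{lem:regular_passes_to_finite_crossed_product_rings:uniformly_regular}, this is the same as finitely presented over $R$. Regularity of $R$ (in the appropriate sense) then yields a resolution
\[
0\longrightarrow P_k\longrightarrow\cdots\longrightarrow P_0\longrightarrow j^\ast M\longrightarrow 0
\]
of finitely generated projective $R$-modules, with $k\le l$ in the $l$-uniformly regular case. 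Applying the exact functor $R\ast D\otimes_R-$, which carries finitely generated projective $R$-modules to finitely generated projective $R\ast D$-modules, produces a finite projective resolution of $R\ast D\otimes_R j^\ast M$ of the same length.

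Now invoke part~\ref{lem:regular_passes_to_finite_crossed_product_rings:modules}: $M$ is a direct summand of $R\ast D\otimes_R j^\ast M$ via $p\circ i=\id_M$. Hence $\Ext^{k+1}_{R\ast D}(M,-)$ is a direct summand of $\Ext^{k+1}_{R\ast D}(R\ast D\otimes_R j^\ast M,-)=0$, so $M$ has projective dimension $\le k$. Since $M$ is finitely presented, truncating any finitely generated projective resolution of $M$ after $k$ steps yields a finite projective resolution of $M$ of length $\le k$, because the $k$-th syzygy is automatically projective. This gives~\ref{lem:regular_passes_to_finite_crossed_product_rings:regular} and~\ref{lem:regular_passes_to_finite_crossed_product_rings:uniformly_regular}. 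For~\ref{lem:regular_passes_to_finite_crossed_product_rings:semi-simple}, $R$ semi-simple means every $R$-module is projective; then $R\ast D\otimes_R j^\ast M$ is a projective $R\ast D$-module for every $R\ast D$-module $M$, and by part~\ref{lem:regular_passes_to_finite_crossed_product_rings:modules} so is its summand $M$. Thus every $R\ast D$-module is projective, i.e.\ $R\ast D$ is semi-simple.

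The main obstacle is part~\ref{lem:regular_passes_to_finite_crossed_product_rings:modules}, specifically checking $R\ast D$-linearity of $i$ in the presence of the non-trivial cocycle $w$ and the conjugation action $c_d$; everything else is a bookkeeping application of the standard fact that direct summands inherit projective dimension, combined with finite freeness of $R\ast D$ over $R$.
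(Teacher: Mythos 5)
Your proposal is correct and takes essentially the same route as the paper: part~(i) via the averaging maps together with the conjugation trick, and parts~(ii)--(iv) by inducing a finite projective resolution of $j^*M$ up along the $R$-free extension $R \to R\ast D$ and using the retraction $p\circ i = \id_M$ to pass the finiteness back to the direct summand $M$. The only stylistic difference is in the last step of (ii)/(iii): the paper simply quotes that a direct summand of a module of type $\FP$ (respectively with an $l$-dimensional resolution) is again of that type, whereas you rederive this through projective dimension and syzygy truncation; the content is the same.

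Two small repairs are needed. First, in the paper's Definition~\ref{def:Regularity_properties_of_rings} the properties \emph{regular} and \emph{$l$-uniformly regular} include Noetherian, so you must also record that $R\ast D$ is Noetherian: any $R\ast D$-submodule of a finitely generated $R\ast D$-module is an $R$-submodule of a finitely generated $R$-module, hence finitely generated over $R$ and a fortiori over $R\ast D$. Your argument in fact uses this twice --- once because it is part of the asserted conclusion, and once to know that $M$ admits a resolution by finitely generated projectives whose $k$-th syzygy (a kernel, which must be finitely generated) you truncate. Second, $R$-linearity of $i$ is not merely that $r\cdot b_e$ ``commutes with the formal tensor'': the tensor is formed over $R$ acting on the right of the first factor, so identifying $b_d\otimes b_d^{-1}rx$ with $rb_d\otimes b_d^{-1}x$ requires writing $b_d^{-1}r=(b_d^{-1}rb_d)\,b_d^{-1}$ with $b_d^{-1}rb_d\in R$ and moving that element across the tensor --- exactly the conjugation step you already use for the $b_{d'}$-linearity, so this is a wording slip rather than a gap. (Also, with the paper's convention $b_{d_1d_2}=w(d_1,d_2)\,b_{d_1}b_{d_2}$, your identity should carry $w(d',d'^{-1}d)^{-1}$; this does not affect the argument.) With these two points supplied, your proof coincides with the paper's.
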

\begin{proof}~\ref{lem:regular_passes_to_finite_crossed_product_rings:modules} We check
  that $i$ is $R \ast D$-linear. Obviously $i$ is compatible with addition, it remains to
  treat multiplication.  Consider $r \in R$ and $d_0 \in D$.  Note for the sequel that
  the element $b_d^{-1}\cdot r\cdot b_{d_0}\cdot b_{d_0^{-1}d}$ in $R \ast D$ belongs to
  $R$. Hence we get for $x \in M$, $r \in R$ and $d_0 \in D$
\begin{multline*}
i(r\cdot b_{d_0} \cdot x) 
=  
\sum_{d \in D} \frac{1}{|D|} \cdot b_d \otimes b_d^{-1}\cdot (r\cdot b_{d_0} \cdot x)
=  
      \sum_{d \in D} \frac{1}{|D|} \cdot b_d \otimes (b_d^{-1}\cdot r \cdot b_{d_0}
      \cdot b_{d_0^{-1}d})\cdot (b_{d_0^{-1}d})^{-1}\cdot x
\\
=  
\sum_{d \in D} \frac{1}{|D|} \cdot b_d\cdot (b_d^{-1}\cdot r \cdot b_{d_0}
      \cdot  b_{d_0^{-1}d}) \otimes (b_{d_0^{-1}d})^{-1}\cdot x
= 
      \sum_{d \in D} \frac{1}{|D|} \cdot r\cdot b_{d_0}\cdot  b_{d_0^{-1}d} \otimes (b_{d_0^{-1}d})^{-1}
      \cdot x
\\
=  
      r \cdot b_{d_0} \cdot \frac{1}{|D|} \cdot  \sum_{d \in D} b_{d_0^{-1}d} \otimes (b_{d_0^{-1}d})^{-1}
      \cdot x
=  
r \cdot b_{d_0} \cdot \frac{1}{|D|} \cdot \sum_{d' \in D}  b_{d'} \otimes (b_{d'})^{-1}\cdot x
=
r \cdot b_{d_0} \cdot i(x).
\end{multline*}
Obviously $p$ is a well-defined $R \ast D$-homomorphism satisfying $p \circ i = \id_M$.
\\[1mm]~\ref{lem:regular_passes_to_finite_crossed_product_rings:regular} Since $R$ is regular,
$R$ is in particular Noetherian. Since $R \ast D$ is a finitely generated $R$-module,
$R \ast D$ is Noetherian as well.

It remains to show that a finitely presented $R \ast D$-module $M$ is
of type $\FP$. Since $R$ is regular and the $R$-module $i^*M$ is
finitely presented, $i^*M$ is of type $\FP$.  Since $R \ast D$ is free
as $R$-module and hence the functor sending an $R$-module $N$ to the
$R \ast D$-module $R \ast D \otimes_R N$ is flat and sends finitely
generated projective $R$-modules to finitely generated projective
$R \ast D$-modules, the $R \ast D$-module $R\ast D \otimes_R i^*M$ is
of type $\FP$. Since a direct summand in a module of type $\FP$ is of
type $\FP$ again, the $R \ast D$-module $M$ is of type $\FP$.
\\[1mm]~\ref{lem:regular_passes_to_finite_crossed_product_rings:uniformly_regular}
The proof is analogous to
assertion~\ref{lem:regular_passes_to_finite_crossed_product_rings:regular},
since all the statements about finite-dimension remain true, if one
inserts $l$-dimensional everywhere.
\\[1mm]~\ref{lem:regular_passes_to_finite_crossed_product_rings:semi-simple}
This follows from
assertion~\ref{lem:regular_passes_to_finite_crossed_product_rings:modules}.
This finishes the proof of
Lemma~\ref{lem:regular_passes_to_finite_crossed_product_rings}.
\end{proof}

\subsection{The  Hecke algebra and crossed products}%
\label{subsec:The_Hecke_algebra_and_crossed_products}

In this subsection we will assume that $Q$ is compact. 

Consider a compact open normal subgroup $K$ of $G$ satisfying  $K \in P$.
Since both $K$ and $N$ are normal in $G$, the subgroup $N\!K$ of  $G$ is also normal. Put
\begin{equation}
 D : = G/N\!K = Q/\pr(K).
\label{finite_discrete_group_D}
\end{equation}
Note that $D$ is a finite discrete group.

Next we show that $\calh(G;R,\rho,\omega)$ is a left $R$-module. Namely, define for 
$s \in \calh(G;R,\rho,\omega)$ the new element $rs$  by $rs(g) := r \cdot s(g)$
One easily checks  that $rs$ satisfies~\eqref{condition_s_and_omega:left},%
~\eqref{condition_s_and_omega:right},~\eqref{condition_s_Hecke_algebra:left},
and~\eqref{condition_s_Hecke_algebra:right}.

Fix a set-theoretic section  $\sigma \colon D \to G$ of the projection
$p \colon G \to D= G/N\!K$ satisfying $\sigma(e_D) = e_G$. In the sequel we denote by $T$
the transversal of $p$ given by $T := \{ \sigma(d)^{-1} \mid d \in D\}$.  For $d \in D$
define  $b_d \in \calh(G//K;R,\rho,\omega)$ by the function
\begin{multline}
b_d \colon G \to R, \quad 
\\
g \mapsto
\begin{cases}
\frac{1}{\mu(\pr(K))} \cdot \omega(n) 
& \text{if}\; p(g) = d \;\text{and}\; g = nk\sigma(d)\;\text{for}\; n \in N, k \in K;
\\
0 & p(g) \not= d.
\end{cases}
\label{b_d}
\end{multline}
This is independent of the choice of $n \in N$ and $k \in K$, since for
$n_0,n_1 \in N$ and $k_0,k_1 \in K$ with $n_0k_0 = n_1k_1$ we have 
$n_1^{-1}n_0 = k_1k_0^{-1} \in N \cap K$ and we compute
  \begin{equation}
    \label{choice_of_n-k_does_not_matter}
    \omega(n_1)
    =
     \omega(n_1) \cdot \omega(n_1^{-1}n_0) 
    \stackrel{\eqref{tau_and_omega}}{=} \omega(n_1) \cdot 1 =
    \omega(n_0).
  \end{equation}

  We have to check that the required transformation
  formulas~\eqref{condition_s_Hecke_algebra:left}
  and~\eqref{condition_s_Hecke_algebra:right} for $g \in G$ and $k \in K$ are
  satisfied.  If $p(g) \not= d$, then $b_d(kg) = b_d(g) = b_d(gk) = 0$ and the formulas
  hold. It remains to treat the case $p(g) = d$. This follows from the calculations for
  $g = nk\sigma(d)$ for $n \in N$, $k \in K$ and $k' \in K$ using
  $\sigma(d)k'\sigma(d)^{-1} \in K$
\begin{multline*}
b_d(k'g)
= 
b_d(k'nk\sigma(d))
= 
b_d((k'nk'^{-1})(k'k)\sigma(d))
\\
\stackrel{\eqref{b_d}}{=}  
\frac{1}{\mu(\pr(K))} \cdot \omega(k'nk'^{-1})
\stackrel{\eqref{omega_and_conjugation}}{=} 
\frac{1}{\mu(\pr(K))} \cdot \omega(n)
\stackrel{\eqref{b_d}}{=} b_d(g),
\end{multline*}
and
\begin{multline*}
b_d(gk')
= 
b_d(nk\sigma(d)k')
=  
b_d(n(k\sigma(d)k'\sigma(d)^{-1})\sigma(d))
\\
\stackrel{\eqref{b_d}}{=}  
\frac{1}{\mu(\pr(K))} \cdot \omega(n) 
\stackrel{\eqref{b_d}}{=}
b_d(g).
\end{multline*}
The verification of~\eqref{condition_s_and_omega:left}
and~\eqref{condition_s_and_omega:right} is left to the reader.  This finishes the proof
that $b_d$ is a well-defined element in $\calh(G//K;R,\rho,\omega)$.

Consider any element $s \in \calh(G//K;R,\rho,\omega)$. Then we get
\begin{eqnarray}
s & = & \sum_{d \in D} \mu(\pr(K)) \cdot s(\sigma(d)) \cdot b_d
\label{s_is_sum_d_s(sigma(d)_cdot_b_d}
\end{eqnarray}
by the following calculation for $g \in G$  with $g = nk\sigma(d)$ for $n \in N$ and $k \in K$
\begin{multline*}
s(g) 
= 
s(nk\sigma(d))
\stackrel{~\eqref{condition_s_and_omega:left},\eqref{condition_s_Hecke_algebra:left}}{=} 
  \omega(n) \cdot s(\sigma(d))
\\
 \stackrel{\omega(n) \in \cent(R)}{=} 
\mu(\pr(K))\cdot s(\sigma(d)) \cdot \left(\frac{1}{\mu(\pr(K))} \cdot \omega(n)\right)
\stackrel{\eqref{b_d}}{=} 
 \mu(\pr(K)) \cdot s(\sigma(d)) \cdot b_d(g).
\end{multline*}
We conclude from~\eqref{s_is_sum_d_s(sigma(d)_cdot_b_d} that
$\{b_d \mid d \in D\}$ is an $R$-basis for the left
$R$-module $\calh(G//K;R,\rho,\omega)$.

For $d_1, d_2$ in $D$, define an element 
\begin{eqnarray}
w(d_1,d_2) & := &
\omega(n) \in R^{\times},
\label{calt(d_1,d_2)}
\end{eqnarray}
if $\sigma(d_1d_2)\sigma(d_2)^{-1}\sigma(d_1)^{-1} = nk$ for $n \in N$ and $k\in K$.
This is independent of the choice of $n \in N$ and $k\in K$ by~\eqref{choice_of_n-k_does_not_matter}.
Next we want to show
\begin{eqnarray}
b_{d_1} \cdot b_{d_2} 
& = &
w(d_1,d_2) \cdot b_{d_1d_2}.
\label{b_d_1_cdot_b_d_2_is_omega_cdot_b_d_d_2}
\end{eqnarray}
Consider $d_1,d_2 \in D$ and $g \in G$. Choose elements  $n \in N$ and $k\in K$ satisfying
$\sigma(d_1d_2)\sigma(d_2)^{-1}\sigma(d_1)^{-1} = nk$.
If $p(g) = d_1d_2$, we fix $n_0 \in N$  and $k_0 \in K$
satisfying $g = n_0k_0 \sigma(d_1) \sigma(d_2)$.
We compute 
\begin{eqnarray}
  & &
 \label{aux_comp_b_d:(1)}
 \\
\lefteqn{(b_{d_1} \cdot b_{d_2})(g)}
  & &
  \nonumber
  \\
& \stackrel{\eqref{calt_hecke_algebra:product}}{=} & 
\mu(\pr(K)) \cdot \sum_{d \in D} b_{d_1}(g\sigma(d)^{-1}) \cdot g\sigma(d)^{-1}b_{d_2}(\sigma(d))
  \nonumber
  \\
  & \stackrel{\eqref{b_d}}{=} & 
\mu(\pr(K))\cdot \sum_{d \in D, p(\sigma(d)) = d_2} b_{d_1}(g\sigma(d)^{-1}) \cdot g\sigma(d)^{-1}b_{d_2}(\sigma(d))
   \nonumber
  \\
  &  = & 
\mu(\pr(K)) \cdot  b_{d_1}(g\sigma(d_2)^{-1}) \cdot g\sigma(d_2)^{-1}b_{d_2}(\sigma(d_2))
   \nonumber
  \\
  & \stackrel{\eqref{b_d}}{=} & 
\mu(\pr(K)) \cdot  b_{d_1}(g\sigma(d_2)^{-1}) \cdot g\sigma(d_2)^{-1} \cdot \left(\frac{1}{\mu(\pr(K))} \cdot \omega(e) \right)\nonumber
  \\
  & = &
  b_{d_1}(g\sigma(d_2)^{-1}) \cdot \bigl(g\sigma(d_2)^{-1} \cdot 1\bigr)
  \nonumber  
   \\
  & = &
  b_{d_1}(g\sigma(d_2)^{-1}) 
  \nonumber      
  \\
  & \stackrel{\eqref{b_d}}{=} & 
\begin{cases}
\frac{1}{\mu(\pr(K))} \cdot \omega(n_0) 
& \text{if}\; p(g) = d_1d_2;
\\
= 0 & \text{if}\; p(g) \not= d_1d_2.
\end{cases}
\nonumber
\end{eqnarray}
Suppose for $g \in G$ that $p(g) = d_1d_2$. We can write 
\[
  g = n_0k_0 \sigma(d_1) \sigma(d_2) = \bigl(n_0k_0k^{-1}n^{-1}(k_0k^{-1})^{-1}\bigr)\bigl(k_0k^{-1}\bigr)\sigma(d_1d_2)
\]
and have $n_0k_0k^{-1}n^{-1}(k_0k^{-1})^{-1} \in N$ and $k_0k^{-1} \in K$. We compute
\begin{eqnarray}
  & &
  \label{aux_comp_b_d:(2)}
  \\
  \lefteqn{w(d_1,d_2) \cdot b_{d_1d_2}(g)}
  & &
  \nonumber
  \\
  & \stackrel{\eqref{calt(d_1,d_2)}}{=} &
 \omega(n) \cdot b_{d_1d_2}(g)
 \nonumber
  \\
  & \stackrel{\eqref{b_d}}{=}   &
        \omega(n)  \cdot  \frac{1}{\mu(\pr(K))}
        \cdot \omega(n_0k_0k^{-1}n^{-1}(k_0k^{-1})^{-1}) 
\nonumber
 \\
  & = &
        \omega(n) \cdot  \frac{1}{\mu(\pr(K))} \cdot \omega(n_0)
        \cdot \omega(k_0k^{-1}n^{-1}(k_0k^{-1})^{-1}) 
 \nonumber
 \\
  & \stackrel{\eqref{omega_and_conjugation}}{=} &
                                                  \omega(n) \cdot  \frac{1}{\mu(\pr(K))} \cdot \omega(n_0)
                                                  \cdot \omega(n^{-1}) 
\nonumber
 \\
 & \stackrel{\omega(n_0) \in \cent(R)}{=}& 
                                                 \frac{1}{\mu(\pr(K))} \cdot  \omega(n) \cdot  \omega(n^{-1}) \cdot \omega(n_0)
\\
& = &
\frac{1}{\mu(\pr(K))} \cdot  \omega(n_0).
\nonumber                                                                 
\end{eqnarray}
Since $w(d_1,d_2) \cdot b_{d_1d_2}(g) = 0$, if $p(g) \not=d_1d_2$, we
conclude~\eqref{b_d_1_cdot_b_d_2_is_omega_cdot_b_d_d_2}
from~\eqref{aux_comp_b_d:(1)} and~\eqref{aux_comp_b_d:(2)}.

We compute for  $d\in D$, $r \in R$ and $d' \in D$ using the fact
that $\{\sigma(d'')^{-1} \mid d'' \in D\}$ is a transversal for $G \to G/N\!K = D$ and $\sigma(e_D) = e_G$

\begin{eqnarray*}
\lefteqn{\bigl(b_d \cdot (r\cdot b_{e_D})\bigr)(\sigma(d'))}
& & 
\\
&  \stackrel{\eqref{calt_hecke_algebra:product}}{=} &
\mu(\pr(K)) \cdot \sum_{d''\in D} b_d(\sigma(d')\sigma(d'')^{-1})
\cdot \sigma(d') \sigma(d'')^{-1}\bigl((r \cdot b_{e_D})(\sigma(d''))\bigr)
  \\
  & \stackrel{\eqref{b_d}}{=} &
\mu(\pr(K)) \cdot \sum_{d''\in \{e_Q\}} b_d(\sigma(d')\sigma(d'')^{-1})
\cdot \sigma(d') \sigma(d'')^{-1}\bigl((r \cdot b_{e_D})(\sigma(d''))\bigr)
\\
& = &
  \mu(\pr(K)) \cdot b_d(\sigma(d')e_G^{-1}) \cdot \sigma(d')e_G^{-1}\bigl((r \cdot b_{e_D})(e_G)\bigr)
  \\
 &  \stackrel{\eqref{b_d}}{=} & 
\mu(\pr(K)) \cdot b_d(\sigma(d'))\cdot \sigma(d')\left(\frac{1}{\mu(\pr(K))} \cdot r \cdot 1\right)
\\
& = &
b_d(\sigma(d'))\cdot  \sigma(d')r
\\
&  \stackrel{\eqref{b_d}}{=} & 
\begin{cases}
\frac{1}{\mu(\pr(K))} \cdot \sigma(d)r& \text{if}\; d' = d;
\\
0 & \text{otherwise}.
\end{cases}                             
 \end{eqnarray*}

 This implies for $d \in D$,  $r \in R$, and $g \in G$
 \begin{multline}
\bigl(b_d \cdot (r\cdot b_{e_D})\bigr)
\stackrel{\eqref{s_is_sum_d_s(sigma(d)_cdot_b_d}}{=}  
\sum_{d' \in D} \mu(\pr(K)) \cdot b_d \cdot (r\cdot b_{e_D}) (\sigma(d'))b_{d'}
\\
= 
\mu(\pr(K)) \cdot \left(\frac{1}{\mu(\pr(K))} \cdot \sigma(d)r\right) \cdot b_d
=
\sigma(d)r  \cdot b_d.
\label{b_d_cdot_r_cdot_b_e_is_sigma(d)_cdot_r_cdot_b_d}
\end{multline}

Recall from Lemma~\ref{unit_in_calh(G//K}
that $\calh(G//K;R,\rho,\omega)$ has a unit, namely $b_{e_D}$.

We conclude from~\eqref{b_d_1_cdot_b_d_2_is_omega_cdot_b_d_d_2}
and~\eqref{b_d_cdot_r_cdot_b_e_is_sigma(d)_cdot_r_cdot_b_d}

\begin{lemma}\label{lem:twisted_Hecke_algebra_over_K-as_crossed_product}
  Suppose that $Q$ is compact. Consider a
  compact open normal subgroup $K$ of $G$ satisfying $K \in P$.

  Then the unital ring $\calh(G//K;R,\rho,\omega)$ is the crossed
  product $R \ast D$ associated to $(w,c)$ for $w$ defined
  in~\eqref{calt(d_1,d_2)} and $c_d(r) := (\rho \circ \sigma(d))(r)$.
\end{lemma}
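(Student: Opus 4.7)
The plan is a direct bookkeeping exercise: all the substantive work has already been done in the preceding discussion, so the lemma reduces to matching the algebraic data we have assembled against the definition of a crossed product ring $R \ast D$. First, I would verify that $\{b_d \mid d \in D\}$ is a free left $R$-basis of $\calh(G//K;R,\rho,\omega)$. Equation~\eqref{s_is_sum_d_s(sigma(d)_cdot_b_d} gives the spanning property, and uniqueness of coefficients follows because the elements $b_d$ are supported on pairwise disjoint cosets $p^{-1}(d) \subseteq G$: evaluating any relation $\sum_{d} r_d \cdot b_d = 0$ at $\sigma(d')$ forces $r_{d'} = 0$ for every $d' \in D$. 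Next, Lemma~\ref{lem:calh(H)_as_union} shows that $1_K$ is the unit of $\calh(G//K;R,\rho,\omega)$, and a direct comparison of the defining formulas for $1_K$ and $b_{e_D}$ yields $b_{e_D} = 1_K$, supplying the unit axiom.

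Second, I would read off the two characteristic crossed product relations from the identities already proved. Equation~\eqref{b_d_1_cdot_b_d_2_is_omega_cdot_b_d_d_2} states $b_{d_1} \cdot b_{d_2} = w(d_1,d_2) \cdot b_{d_1 d_2}$ with $w(d_1,d_2) = \omega(n)$; both centrality and invertibility of $w(d_1,d_2)$ are built into the definition of a normal character, since by hypothesis $\omega$ takes values in $\cent(R)^\times$. Equation~\eqref{b_d_cdot_r_cdot_b_e_is_sigma(d)_cdot_r_cdot_b_d} reads $b_d \cdot (r \cdot b_{e_D}) = \sigma(d) r \cdot b_d$, which is precisely the desired interchange $b_d \cdot (r \cdot b_{e_D}) = c_d(r) \cdot b_d$ for $c_d(r) := (\rho \circ \sigma(d))(r)$. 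The normalization $c_{e_D}(r) = r$ is automatic from $\sigma(e_D) = e_G$ and the fact that $\rho$ is a group homomorphism, so $\rho(e_G) = \id_R$.

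There is essentially no substantial obstacle here: every nontrivial verification, namely well-definedness of each $b_d$, independence of $w(d_1,d_2)$ from the choice of decomposition $nk$ (cf.~\eqref{choice_of_n-k_does_not_matter}), and the lengthy convolution computations underlying the two key multiplication formulas, has already been carried out in the body of the section. The lemma itself is then just the observation that the algebraic data assembled above is literally the defining data of a crossed product ring $R \ast D$ associated to $(w,c)$.
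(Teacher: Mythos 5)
Your proposal is correct and follows essentially the same route as the paper: the paper's proof of this lemma is literally the observation that the basis statement following~\eqref{s_is_sum_d_s(sigma(d)_cdot_b_d}, the unit $b_{e_D}$, and the relations~\eqref{b_d_1_cdot_b_d_2_is_omega_cdot_b_d_d_2} and~\eqref{b_d_cdot_r_cdot_b_e_is_sigma(d)_cdot_r_cdot_b_d} constitute exactly the defining data of $R \ast D$ with the stated $(w,c)$. Your added details (linear independence of the $b_d$ via disjoint supports, $b_{e_D}=1_K$, and $c_{e_D}(r)=r$ from $\sigma(e_D)=e_G$) are points the paper leaves implicit, so nothing is missing.
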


\subsection{Filtering the Hecke algebra of a compact group by normal compact open subgroups}%
\label{subsec:Filtering_the_Hecke_algebra_of_a_comapct_group_by_normal_compact_open_subgroups}

Consider a  sequence $G = K_0 \supseteq K_1 \supseteq K_1 \supseteq K_2 \supseteq \cdots$ of
normal compact open subgroups of $G$ with $\bigcap_{n \ge 0} K_n= \{1\}$  such that $K_n \in P$
holds for $n \in \IN$. It exists by Lemma~\ref{lem:arranging_K_to_be_normal} as we
assume throughout this section that $Q$ is compact and $N$ is locally central.  Let $1_{K_n}$ be the element in
$\calh(G;R,\rho,\omega)$ defined in~\eqref{unit_in_calh(G//K}.  Then $1_{K_n}$ is central
in $\calh(G;R,\rho,\omega)$, since  $K_n$ is normal in $G$. We have
$1_{K_n} \cdot 1_{K_m} = 1_{K_n} = 1_{K_m} \cdot 1_{K_n}$ for $m \le n$. For every
$s \in \calh(G)$ there exists a natural number $n \in \IN$ satisfying
$1_{K_n} \cdot s = s = s \cdot 1_{K_n}$. In the sequel we sometimes abbreviate $1_n = 1_{K_n}$, 
$\calh(G) = \calh(G;R,\rho,\omega)$ and $\calh(G//K_n) = \calh(G//K_n;R,\rho,\omega)$ and put $1_{-1} = 0$.
The elementary proof of the next lemma is left to the reader.

\begin{lemma}\label{lem:decomposing_calh(G;R,rho,omega)}
  We have the subrings rings $1_n\calh(G)1_n = \calh(G//K_n)$  and $(1_n - 1_{n-1})\calh(G)(1_n - 1_{n-1})$
  of $\calh(G)$, which have $1_n$ and $(1_n - 1_{n-1})$ as unit. We get an obvious
  identification of rings (without unit)
  \[
    \bigoplus_{m \ge 0} (1_m - 1_{m-1})\calh(G)(1_m - 1_{m-1}) = \calh(G),
   \]
   and for $n \ge 0$ of rings with unit
    \[
     \bigoplus_{m  = 0}^n   (1_m - 1_{m-1})\calh(G)(1_m - 1_{m-1}) = 1_n\calh(G)1_n.
     \]
 \end{lemma}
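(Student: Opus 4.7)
The plan is to exhibit $\{1_n\}_{n\ge 0}$ as a chain of central idempotents in $\calh(G)$ whose successive differences $e_n := 1_n - 1_{n-1}$ (with the convention $1_{-1} := 0$) form a pairwise orthogonal family of central idempotents with telescoping partial sums $\sum_{m=0}^n e_m = 1_n$, and then to invoke the Peirce decomposition together with the approximate-unit exhaustion supplied by Lemma~\ref{lem:calh(H)_as_union}.

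First I would verify that each $1_n$ is a central idempotent in $\calh(G)$. Idempotency is immediate from Lemma~\ref{lem:calh(H)_as_union}, since $1_n$ is the unit of the unital subring $\calh(G//K_n)$. Centrality, already noted in the text, follows from $K_n$ being normal in $G$ via a direct check against the formulas~\eqref{unit_in_calh(G//K} and~\eqref{calt_hecke_algebra:product}. For $m \le n$, the inclusion $K_n \subseteq K_m$ shows that every $K_m$-bi-invariant element is automatically $K_n$-bi-invariant, so $\calh(G//K_m) \subseteq \calh(G//K_n)$. In particular $1_m$ lies in $\calh(G//K_n)$, and because $1_n$ is the unit of $\calh(G//K_n)$ we obtain $1_n \cdot 1_m = 1_m \cdot 1_n = 1_m$, i.e.\ $1_i \cdot 1_j = 1_{\min(i,j)}$. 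An elementary expansion of $(1_m - 1_{m-1})(1_{m'} - 1_{m'-1})$ using this relation yields $e_m \cdot e_{m'} = \delta_{m,m'}\, e_m$.

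Next I would identify $1_n \calh(G) 1_n$ with $\calh(G//K_n)$. The inclusion $\supseteq$ is clear: every $s \in \calh(G//K_n)$ satisfies $s = 1_n \cdot s \cdot 1_n$. For the reverse inclusion, given $r \in \calh(G)$, a direct check against formula~\eqref{calt_hecke_algebra:product}, with a common admissible $K \in P$ chosen inside $K_n$, shows that $1_n \cdot r$ is left $K_n$-invariant and $r \cdot 1_n$ is right $K_n$-invariant; this uses only that $1_n$ is $K_n$-bi-invariant and that elements of $K_n$ act trivially on $R$ by~\eqref{calt_hecke_algebra:tau_and_multiplication_with_r}. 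Consequently $1_n r 1_n \in \calh(G//K_n)$.

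With these ingredients in hand, the decomposition is immediate. In the unital ring $1_n \calh(G) 1_n = \calh(G//K_n)$, the decomposition of the unit $1_n = e_0 + \cdots + e_n$ into pairwise orthogonal central idempotents gives the second displayed identity
\[
\bigoplus_{m=0}^n e_m \calh(G) e_m \;=\; 1_n \calh(G) 1_n.
\]
For the first, Lemma~\ref{lem:calh(H)_as_union} yields $\calh(G) = \bigcup_n \calh(G//K_n) = \bigcup_n 1_n \calh(G) 1_n$, and taking the directed union of the partial decompositions along the compatible inclusions $1_n \calh(G) 1_n \hookrightarrow 1_{n+1}\calh(G) 1_{n+1}$ collapses to $\bigoplus_{m \ge 0} e_m \calh(G) e_m$, regarded as a ring without unit. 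No step presents a serious obstacle; the only bookkeeping to watch is the order convention on the chain $\{K_n\}$, which determines that $1_i \cdot 1_j = 1_{\min(i,j)}$ and hence that the $e_m$ are pairwise orthogonal.
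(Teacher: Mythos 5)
Your proof is correct, and it is essentially the intended one: the paper explicitly leaves the ``elementary proof'' of this lemma to the reader, having already recorded the needed ingredients (centrality of $1_{K_n}$ for normal $K_n$, the multiplicative relations among the $1_{K_n}$, and the approximate-unit property), and your argument --- orthogonal central idempotents $e_m = 1_m - 1_{m-1}$ with telescoping sums, the identification $1_n\calh(G)1_n = \calh(G//K_n)$, the Peirce decomposition, and the exhaustion $\calh(G)=\bigcup_n \calh(G//K_n)$ from Lemma~\ref{lem:calh(H)_as_union} --- fills it in exactly as intended. One remark: your rule $1_i\cdot 1_j = 1_{\min(i,j)}$ is the correct one (the unit of the larger ring absorbs the smaller one, consistent with Definition~\ref{def:ring_with_approximate_unit}), whereas the relation stated in the paper just before the lemma, $1_{K_n}\cdot 1_{K_m} = 1_{K_n}$ for $m \le n$, has the indices reversed; your convention is precisely what makes the orthogonality of the $e_m$ come out, so your bookkeeping is the right one.
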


  Recall that a sequence $A_0 \xrightarrow{f_0} A_1 \xrightarrow{f_1} A_2$ in an additive
category $\cala$ is called \emph{exact at $A_1$}, if $f_1 \circ f_0 = 0$ and for every
object $A$ and morphism $g \colon A \to A_1$ with $f_1 \circ g = 0$ there exists a
morphism $\overline{g} \colon A \to A_0$ with $f_0 \circ \overline{g} = g$.  For
information how this notion is related by the Yoneda embedding to the usually notion of
exactness for modules we refer to~\cite[Lemma~5.10 and Lemma~6.3]{Bartels-Lueck(2020additive)}.
A functor $F \colon \cala \to \cala'$ of additive categories is called \emph{faithfully
  flat}, provided that a sequence $A_0 \xrightarrow{f_0} A_1 \xrightarrow{f_1} A_2$ in
$\cala$ is exact, if and only if the sequence
$F(A_0) \xrightarrow{F(f_0)} F(A_1) \xrightarrow{F(f_1)} F(A_2)$ in $\cala'$ is exact.

\begin{lemma}\label{lem:inclusion_of_a_factor} Let $S$ and $T$ be unital rings. Let
  $\pr \colon S \times T \to T$ be the projection, which is a homomorphism of unital rings.
  Let $i \colon S \to S \times T$ be the inclusion sending $s$ to $(s,0)$, which is a
  homomorphism of rings (without units).  Then
  \begin{enumerate}
    \item\label{lem:Iem:inclusion_of_a_factor:Idem(i_oplus)_and_res_pr} 
There exists a diagram of unital additive categories commuting up to natural equivalence of
  unital additive categories
\[
\xymatrix@!C=8em{\Idem(\underline{S}_{\oplus}) \ar[r]^-{\Idem(i_{\oplus})} \ar[d]_{\Theta_S}^{\simeq} 
&
\Idem(\underline{S \times T}_{\oplus}) \ar[d]^{\Theta_{S \times T}}_{\simeq} 
\\
\MODcat{S}_{\fgp} \ar[r]_-{\pr^*}
& \MODcat{S \times T}_{\fgp}
}
\]
where the vertical arrows are the equivalences of unital additive categories of~\eqref{underline(r)_oplus_to_MODcat(R)_fgp}
and $\pr^*$ is restriction with $\pr$;

\item\label{em:inclusion_of_a_factor:retraction} 
  The functor
$\Idem(\underline{i}_{\oplus}) \colon \Idem(\underline{S}_{\oplus}) \to \Idem(\underline{S \times T}_{\oplus})$
has retraction, namely
$\Idem(\underline{\pr}_{\oplus}) \colon \Idem(\underline{S\times T }_{\oplus}) 
\to 
\Idem(\underline{S }_{\oplus});
$

\item\label{em:inclusion_of_a_factor:exactness} 
The functor
$\Idem(\underline{i}_{\oplus}) \colon \Idem(\underline{S}_{\oplus}) 
\to 
\Idem(\underline{S \times T}_{\oplus})$
is faithfully flat.
\end{enumerate}
\end{lemma}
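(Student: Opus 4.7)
The plan is to unwind the definitions in each of the three parts, relying on the explicit description of $\Idem(\underline{R}_{\oplus})$ in terms of matrices of idempotents and the equivalence $\theta_R$ of~\eqref{underline(r)_oplus_to_MODcat(R)_fgp}. I would read the statement with the natural typo correction that $\pr \colon S \times T \to S$ is the projection onto the first factor, so that $\pr \circ i = \id_S$ as a composition of (non-unital) ring homomorphisms and so that $\pr^*$ has the source and target appearing in the diagram.

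For part~\ref{lem:Iem:inclusion_of_a_factor:Idem(i_oplus)_and_res_pr}, I would track an object $(\underline{n}, p)$, where $p = (p_{jk})$ is an idempotent matrix over $S$, through both paths of the square. Going up-right, $\Idem(\underline{i}_{\oplus})$ sends it to $(\underline{n}, (i(p_{jk})))$, and $\Theta_{S \times T}$ takes this to the image of the idempotent matrix $(i(p_{jk}))$ acting on $(S \times T)^n$. Since each entry is of the form $(s,0)$, this image lies in $S^n \subseteq (S \times T)^n$ and equals $\im(p)$, on which $(s,t) \in S \times T$ acts by $s$. Going down-right yields $\pr^*(\im(p))$, which is the same $S \times T$-module. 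A parallel verification on morphisms and standard naturality arguments assemble this into a natural equivalence of unital additive categories.

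Part~\ref{em:inclusion_of_a_factor:retraction} follows immediately from functoriality: since $\pr \circ i = \id_S$ as a composition of ring homomorphisms, functoriality of $\underline{(-)}$, $(-)_{\oplus}$, and $\Idem$ yields $\Idem(\underline{\pr}_{\oplus}) \circ \Idem(\underline{i}_{\oplus}) = \Idem(\underline{\id_S}_{\oplus}) = \id$.

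For part~\ref{em:inclusion_of_a_factor:exactness}, I would use the equivalence of~\ref{lem:Iem:inclusion_of_a_factor:Idem(i_oplus)_and_res_pr} to transport the question to modules, where it becomes: a sequence of finitely generated projective $S$-modules is exact if and only if its image under $\pr^*$ is exact as a sequence of $S \times T$-modules. This is transparent because the abelian category $\MODcat{S \times T}$ splits as $\MODcat{S} \times \MODcat{T}$ and $\pr^*$ is the inclusion of the first factor, which preserves and reflects exactness of sequences. Combined with the identification of exactness in $\Idem(\underline{R}_{\oplus})$ with module-theoretic exactness in $\MODcat{R}_{\fgp}$ afforded by~\cite[Lemma~5.10 and Lemma~6.3]{Bartels-Lueck(2020additive)}, this gives that $\Idem(\underline{i}_{\oplus})$ is faithfully flat. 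The main obstacle, albeit a mild one, is this last bridge between the categorical factorization notion of exactness in additive categories and the usual abelian-category notion; since it is supplied by cited results, the actual verifications above are routine.
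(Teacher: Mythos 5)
Your proposal is correct and follows essentially the same route as the paper: the explicit entrywise comparison of $\im(p)$ with the image of the idempotent matrix over $S \times T$ (identifying its $S\times T$-structure as restriction along $\pr\colon S\times T\to S$, the same typo correction the paper implicitly makes) is exactly how the paper builds the natural isomorphism in part (i), part (ii) is the same appeal to $\pr\circ i=\id_S$, and part (iii) is the paper's one-line argument that restriction is faithfully flat, which you merely spell out via $\MODcat{S\times T}\simeq \MODcat{S}\times\MODcat{T}$ and the cited comparison lemmas for categorical versus module exactness. No gaps.
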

\begin{proof}~\ref{lem:Iem:inclusion_of_a_factor:Idem(i_oplus)_and_res_pr}
Next we construct for every object $([l],p)$ in $\Idem(S_{\oplus})$ an isomorphism in
$\MODcat{S \times T}_{\fgp}$
\[
  T([l],p) \colon  \pr^*\circ \;\Theta_S([l],p)
  \xrightarrow{\cong} \Theta_{S \times T} \circ \Idem(i_{\oplus})([l],p).
\]
Let $A$ be the $(l,l)$-matrix over $S$, for which $p \colon [l] \to [l] $ is given by
$A$. If $i(A)$ is the $(l,l)$-matrix over $S \times T$ given by applying $i$ to each
element in $A$, then $\theta_{S \times T} \circ i_{\oplus}(p)$ is the
$S \times T$-homomorphism $r_{i(A)} \colon (S \times T)^l \to (S \times T)^{l}$ given by
right multiplication with $i(A)$.  Let $i^l \colon S^l \to (S \times T)^l$ be the map
sending $(x_1, x_2, \ldots, x_l)$ to $(i(x_1), i(x_2), \ldots, i(x_l))$. We obtain a
commutative diagram of abelian groups
\[\xymatrix{S^l \ar[r]^-{i^l} \ar[d]_{r_A} & (S \times T)^l \ar[d]^{r_{i(A)}}
    \\
    S^{l } \ar[r]_-{i^{l'}} & (S \times T)^{l}.  }
\]
Now $i^{l'}$ induce a homomorphism of abelian groups.
\[
T([l],p) \colon \im(r_A) \to \im(r_{i(A)}).
\]
It is injective, since $i$ and hence $i^{l}$ is injective. Next we show that $T([l],p)$ is
bijective.  Let $y$ be an element of the image of $r_{i(A)}$.  Choose
$x = \bigl((s_1,t_1), \ldots, (s_l,t_l)\bigr)$ in $(S \times T)^l$ with $r_{i(A)}(x) = y$.
Define $x' \in S$ by $x' = (s_1, \ldots, s_l)$. Then
$r_{i(A)} \circ i^l(x') = r_{i(A)}(x) = y$. Hence $i^{l}$ sends $r_A(x)$ to $y$. This
finishes the proof that $T([l],p)$ is an isomorphisms of abelian groups.  One easily
checks that it is an isomorphism of $S \times T$-modules.

We leave it to the reader to check that the collection of the isomorphisms $T([l],p)$
defines a natural equivalence of functors
$\Idem(\underline{S}_{\oplus}) \to \MODcat{S \times T}_{\fgp}$ from $\pr^* \circ \; \theta_S$
to $\theta_{S \times T} \circ \Idem(i_{\oplus})$.
\\[1mm]~\ref{em:inclusion_of_a_factor:retraction}
This follows from $\pr \circ i = \id_{S}$.
\\[1mm]~\ref{em:inclusion_of_a_factor:exactness}
Since restriction is faithfully flat, the claim follows from
assertion~\ref{lem:Iem:inclusion_of_a_factor:Idem(i_oplus)_and_res_pr}.
\end{proof}

We record for later purposes

\begin{lemma}\label{lem:criterion_for_flatness_for_Hecke_algebras}
  Suppose that $Q$ is compact. Consider normal compact open subgroups
  $K$ and $K'$ of $G$ satisfying $K' \subseteq K$ and $K,K' \in P$. Let
 \[
 i \colon \calh(G//K;R,\rho,\omega) \to \calh(G//K';R,\rho,\omega )
\]
 be  the  inclusion of rings. Let $m \ge 0$ be an integer. Denote by
 \[
 i[\IZ^m]  \colon \calh(G//K;R,\rho,\omega)[\IZ^m] \to \calh(G//K';R,\rho, \omega)[\IZ^m]
\]
the  inclusion of the (untwisted) group rings induced by $i$.

Then the functor
\begin{multline*}
  \Idem(\underline{i[\IZ^m]}_{\oplus}) \colon
  \Idem\bigl(\underline{\calh(G//K;R,\rho, \omega)[\IZ^m]}_{\oplus}\bigr) 
  \\
 \to 
\Idem\bigl(\underline{\calh(G//K';R,\rho, \omega)[\IZ^m]}_{\oplus}\bigr)
\end{multline*}
has  a retraction and is faithfully flat.
\end{lemma}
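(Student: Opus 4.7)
The plan is to write $i$ as the inclusion of one direct factor in a product of unital rings and then invoke Lemma~\ref{lem:inclusion_of_a_factor}. First I would observe that, since $K$ is normal in $G$, the element $1_K$ is central in $\calh(G;R,\rho,\omega)$ (compare the remark preceding Lemma~\ref{lem:decomposing_calh(G;R,rho,omega)}). Moreover, Lemma~\ref{lem:calh(H)_as_union} gives $1_K \in \calh(G//K;R,\rho,\omega) \subseteq \calh(G//K';R,\rho,\omega)$, and hence $1_K \cdot 1_{K'} = 1_{K'} \cdot 1_K = 1_K$ because $1_{K'}$ is the unit of the larger ring. Setting $e := 1_K$ and $f := 1_{K'} - 1_K$, the elements $e$ and $f$ are orthogonal central idempotents of $\calh(G//K';R,\rho,\omega)$ summing to the unit, which produces a direct product decomposition of unital rings
\[
\calh(G//K';R,\rho,\omega) \;\cong\; e\calh(G//K';R,\rho,\omega)e \;\times\; f\calh(G//K';R,\rho,\omega)f.
\]

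The essential identification to establish is $e\calh(G//K';R,\rho,\omega)e = \calh(G//K;R,\rho,\omega)$. The containment ``$\supseteq$'' is immediate, since every $s \in \calh(G//K;R,\rho,\omega)$ satisfies $s = 1_K s 1_K$. For ``$\subseteq$'' I would combine centrality of $1_K$ in $\calh(G;R,\rho,\omega)$ with the facts that $K$ acts trivially on $R$ (because $K \in P$) and that $K$ is normal in $G$, and verify via the multiplication formula~\eqref{calt_hecke_algebra:product} that $1_K \cdot \calh(G;R,\rho,\omega) \subseteq \calh(G//K;R,\rho,\omega)$. Writing $T := f\calh(G//K';R,\rho,\omega)f$, this produces
\[
\calh(G//K';R,\rho,\omega) \;\cong\; \calh(G//K;R,\rho,\omega) \times T,
\]
and under this isomorphism the inclusion $i$ corresponds to the map $s \mapsto (s,0)$, since $fs = 1_{K'}s - 1_K s = s - s = 0$ for $s \in \calh(G//K;R,\rho,\omega)$.

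Forming the untwisted group ring with $\IZ^m$ preserves finite direct products of rings, so we obtain $\calh(G//K';R,\rho,\omega)[\IZ^m] \cong \calh(G//K;R,\rho,\omega)[\IZ^m] \times T[\IZ^m]$, and $i[\IZ^m]$ is still the inclusion of the first factor $s \mapsto (s,0)$. Applying Lemma~\ref{lem:inclusion_of_a_factor}.\ref{em:inclusion_of_a_factor:retraction} and~\ref{em:inclusion_of_a_factor:exactness} to this decomposition then yields both the desired retraction of $\Idem(\underline{i[\IZ^m]}_{\oplus})$ and its faithful flatness. The only step requiring genuine computation is the identification $e\calh(G//K';R,\rho,\omega)e = \calh(G//K;R,\rho,\omega)$, where one must use both the normality of $K$ in $G$ and the triviality of the action of $K$ on $R$; everything else amounts to formal manipulation of central idempotents and the already-established lemmas.
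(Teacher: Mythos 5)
Your proposal is correct and takes essentially the same route as the paper: the published proof also deduces the statement from the decomposition of unital rings $\calh(G//K';R,\rho,\omega) = \calh(G//K;R,\rho,\omega) \oplus (1_{K'}-1_K)\calh(G//K';R,\rho,\omega)(1_{K'}-1_K)$ together with Lemma~\ref{lem:inclusion_of_a_factor}, applied after passing to the group rings over $\IZ^m$. The identification $1_K\calh(G//K')1_K = \calh(G//K)$ that you single out as the only genuinely computational step is exactly the content invoked from Lemma~\ref{lem:decomposing_calh(G;R,rho,omega)}, whose elementary verification the paper likewise leaves to the reader.
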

\begin{proof} This follows from  Lemma~\ref{lem:inclusion_of_a_factor} and the 
  the decomposition of unital rings 
  \[
  \calh(G//K';R,\rho, \omega)  = \calh(G//K;R,\rho, \omega) \oplus (1_{K'} - 1_K)\calh(G//K';R,\rho, \omega)(1_{K'} - 1_K),
  \]
    cf. Lemma~\ref{lem:decomposing_calh(G;R,rho,omega)}.
\end{proof}

\subsection{Proof of Theorem~\ref{the:regular_coherence_of_the_Hecke_algebra}}%
\label{subsec:Proof_of_Theorem_ref(the:regular_coherence_of_the_Hecke_algebra)}

\begin{lemma}\label{regularity_and_direct-sums}
  Let $\cala_i$ be a collection of additive categories.
  Then $\bigoplus_{i \in I} \cala_i$ is  $l$-uniformly regular
   or regular  respectively, if and only if
  each $\cala_i$ is  $l$-uniformly regular  or regular  respectively.
\end{lemma}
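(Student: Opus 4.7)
The plan is to reduce the statement to the decomposition of module categories. Write $\cala = \bigoplus_{i \in I} \cala_i$. For each $i$ there is an inclusion $\iota_i \colon \cala_i \hookrightarrow \cala$ sending an object $A$ to the tuple which is $A$ in position $i$ and zero elsewhere, and the restriction functor $\iota_i^\ast \colon \mathrm{Mod}(\cala) \to \mathrm{Mod}(\cala_i)$ on contravariant additive module functors has a fully faithful ``extension by zero'' left adjoint $(\iota_i)_\ast$. The key observation is that morphisms between objects of $\cala$ supported at distinct indices vanish, so for any object $A = (A_i)_i$ (finitely supported) we obtain a canonical decomposition $\mor_\cala(-, A) = \bigoplus_{i} (\iota_i)_\ast\bigl(\mor_{\cala_i}(-, A_i)\bigr)$. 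This upgrades to an equivalence between $\mathrm{Mod}(\cala)$ and families $(M_i)_{i \in I}$ of modules over the $\cala_i$, under which finitely generated free modules on $\cala$ correspond to families $(M_i)$ with finite support where each $M_i$ is finitely generated free on $\cala_i$; the same holds for finitely generated projective and, by taking cokernels, for finitely presented.

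Under this equivalence, a (finite) projective resolution of a module $M \leftrightarrow (M_i)_i$ on $\cala$ is precisely a tuple of (finite) projective resolutions of the $M_i$ on $\cala_i$, and the length of the resolution on $\cala$ equals the maximum of the lengths of the individual resolutions. From this the lemma follows immediately in both directions.

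For the ``if'' direction in the $l$-uniformly regular case: given a finitely presented $M$ on $\cala$, decompose it as $(M_i)_i$ with finite support; each $M_i$ is finitely presented on $\cala_i$, hence admits a finite projective resolution of length $\le l$, and assembling these coordinatewise gives a finite projective resolution of $M$ of length $\le l$. For the ``only if'' direction: given a finitely presented $M_{i_0}$ on $\cala_{i_0}$, the extension $(\iota_{i_0})_\ast M_{i_0}$ is finitely presented on $\cala$, so by hypothesis admits a length $\le l$ finite projective resolution, and applying $\iota_{i_0}^\ast$ recovers such a resolution of $M_{i_0}$. The Noetherian hypothesis used in the definition of ``regular'' is handled by the parallel observation that submodules of finitely generated modules on $\cala$ correspond to tuples of submodules of the corresponding finitely generated modules on the $\cala_i$, hence are finitely generated iff each component is. The plain ``regular coherent'' case drops the length bound but is otherwise identical.

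The main technical obstacle is formalising the claim that $\mathrm{Mod}(\cala) \simeq \prod_i \mathrm{Mod}(\cala_i)$ in the additive-category setting used in~\cite{Bartels-Lueck(2020additive)}, including verifying that finite generation/presentation, projectivity, and resolution-length behave as expected under the equivalence. None of these steps involves genuine difficulty, since the vanishing of cross-index morphisms reduces everything to statements about each $\cala_i$ separately; the care needed is essentially bookkeeping, coupled with a reference to the module-theoretic interpretation of exactness from~\cite[Section~6]{Bartels-Lueck(2020additive)}.
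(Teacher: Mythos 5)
Your proposal is correct and takes essentially the same route as the paper: the published proof consists precisely of the observations that every object (hence every finitely generated or finitely presented module) is supported on a finite sub-sum and that the module category of the direct sum decomposes as the product of the module categories of the summands, referring to \cite[Lemma~5.3 and Section~11]{Bartels-Lueck(2020additive)} for the bookkeeping you carry out explicitly. Nothing further is needed.
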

\begin{proof}
  This is a consequence of  the observations following from~\cite[Lemma~5.3]{Bartels-Lueck(2020additive)},
  that for an object
  $A \in \bigoplus_{i \in I} \cala_i$ there exists a finite subset $J \subseteq I$
  with $A \in \bigoplus_{i \in J} \cala_i$ and we have the identifications
  \begin{eqnarray*}
  \mor_{\bigoplus_{i \in I}\cala_i}(?,A)
   & = &
   \mor_{\bigoplus_{i \in J}\cala_i}(?,A);
  \\
    \MODcat{\IZ(\bigoplus_{i \in J} \cala_i)}
    & = &
  \prod_{i \in J} \MODcat{\IZ\cala_i}.
  \end{eqnarray*}
  More details of the proof  can be found in~\cite[Section~11]{Bartels-Lueck(2020additive)}.
\end{proof}

Consider a sequence $G = K_0 \supseteq K_1 \supseteq K_1 \supseteq K_2 \supseteq \cdots$ of
normal compact open subgroups of $G$ with $\bigcap_{n \ge 0} K_n= \{1\}$ such that $K_n \in P$
 holds for $n \in \IN$.  We get from Lemma~\ref{lem:decomposing_calh(G;R,rho,omega)}
identifications  of additive categories
\begin{eqnarray*}
  \bigoplus_{m \ge 0} \Idem\bigl(\underline{(1_m - 1_{m-1})\calh(G)(1_m - 1_{m-1})}_{\oplus}\bigr)[\IZ^m]
   & = &
   \Idem\bigl(\underline{\calh(G)}_{\oplus}\bigr)[\IZ^m];
  \\
 \bigoplus_{m =  0}^n  \Idem\bigl(\underline{(1_m - 1_{m-1})\calh(G)(1_m - 1_{m-1})}_{\oplus}\bigr)[\IZ^m]
  & = &
 \Idem\bigl(\underline{\calh(G/K_n)}_{\oplus}\bigr)[\IZ^m].
\end{eqnarray*}
Hence by   Lemma~\ref{regularity_and_direct-sums}
it suffices to show that $\Idem\bigl(\underline{\calh(G/K_n)}_{\oplus}\bigr)[\IZ^m]$
is $(l+2m)$-uniformly regular or regular respectively for every $n \in \IN$.

The unital ring $\calh(G//K_n)$ is $l$-uniformly regular or regular
respectively, since $R$ is $l$-uniformly regular or regular
respectively by assumption and we have
Lemma~\ref{lem:regular_passes_to_finite_crossed_product_rings}
and Lemma~\ref{lem:twisted_Hecke_algebra_over_K-as_crossed_product}.
Hence $\Idem\bigl(\calh(G//K_n)\bigr)[\IZ^m]$ is $(l+2m)$-uniformly
regular or regular respectively by~\cite[Corollary~6.5 and
Theorem~10.1]{Bartels-Lueck(2020additive)}.  This finishes the proof
of Theorem~\ref{the:regular_coherence_of_the_Hecke_algebra}.


\typeout{---- Section 8:  Negative $K$-groups and the projective class group of Hecke algebras for compact td-groups-------}

\section{Negative $K$-groups and the projective class group of Hecke algebras over compact td-groups}%
\label{sec:Negative_K-groups_and_the_projective_class_group_of_Hecke_algebras_over_compact_td_groups}

Let $G$, $N$, $Q := G/N$, $\pr \colon G \to Q$, $R$, $\calp$, $\rho$, $\omega$,
and $\mu$ be as in
Subsection~\ref{subsec:normal_characters} and denote by
$\calh(G;R,\rho, \omega)$ the Hecke algebra, which we have introduced in
Subsection~\ref{subsec:The_construction_of_the_Hecke_algebra}. Our main assumption
in this section will be that $Q$ is compact.

\begin{lemma}\label{lem:negative_K-groups_and_K_0_of_calh(Q)_for_compact_Q}
  Suppose that $Q$ is compact and $N$ is locally central. Suppose that the unital ring $R$ is  regular
  and satisfies $\IQ \subseteq R$.  Then:

  \begin{enumerate}
  
  \item\label{lem:negative_K-groups_and_K_0_of_calh(Q)_for_compact_Q:K_n_is_colimit}
    Let $\calk$ be the set of compact open
    normal subgroups $K \subseteq G$ with $K \in P$  directed by
    $K \le K' \Longleftrightarrow K' \subseteq K$.

   Then we get for $n \in \IZ$
   \[
     K_n\bigl(\calh(G;R,\rho,\omega)\bigr)
     = \colim_{K \in \calk}  K_n\bigl(\calh(G//K;R,\rho,\omega)\bigr);
   \]

  \item\label{lem:negative_K-groups_and_K_0_of_calh(Q)_for_compact_Q:negative_K_groups}
   We get 
  \[
   K_n\bigl(\calh(G;R,\rho,\omega)\bigr)  = 0 \quad \text{for}\; n \le -1.
   \]

\end{enumerate}

\end{lemma}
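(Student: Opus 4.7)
The plan is to reduce both assertions to the calculation of the algebraic $K$-theory of a crossed product of $R$ with a finite discrete group, which is already accessible by results from the previous section.

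For assertion~\ref{lem:negative_K-groups_and_K_0_of_calh(Q)_for_compact_Q:K_n_is_colimit} the starting point is Lemma~\ref{lem:calh(H)_as_union}, which expresses $\calh(G;R,\rho,\omega)$ as the directed union of the unital subrings $\calh(G//K;R,\rho,\omega)$ where $K$ ranges over $P$. Since $Q$ is compact and $N$ is locally central, Lemma~\ref{lem:arranging_K_to_be_normal} guarantees that the subset $\calk \subseteq P$ of compact open subgroups that are normal in $G$ is cofinal in $P$. Hence we also have
\[
\calh(G;R,\rho,\omega) \;=\; \bigcup_{K \in \calk} \calh(G//K;R,\rho,\omega),
\]
and the induced directed system of $\IZ$-categories $\{\underline{\calh(G//K;R,\rho,\omega)}\}_{K \in \calk}$ has union $\underline{\calh(G;R,\rho,\omega)}$. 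The continuity property~\eqref{colim_K_n(cala_i)_to_k_n(cala)} for non-connective $K$-theory of (not necessarily unital) $\IZ$-categories then yields the desired identification
\[
K_n\bigl(\calh(G;R,\rho,\omega)\bigr) \;=\; \colim_{K \in \calk} K_n\bigl(\calh(G//K;R,\rho,\omega)\bigr)
\]
for every $n \in \IZ$.

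For assertion~\ref{lem:negative_K-groups_and_K_0_of_calh(Q)_for_compact_Q:negative_K_groups} we exploit assertion~\ref{lem:negative_K-groups_and_K_0_of_calh(Q)_for_compact_Q:K_n_is_colimit}: since taking colimits of abelian groups is exact, it suffices to show that $K_n(\calh(G//K;R,\rho,\omega)) = 0$ for every $K \in \calk$ and every $n \le -1$. Fix such a $K$. By Lemma~\ref{lem:twisted_Hecke_algebra_over_K-as_crossed_product}, the unital ring $\calh(G//K;R,\rho,\omega)$ is isomorphic to a crossed product $R \ast D$ for the finite discrete group $D = Q/\pr(K)$. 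Because $R$ is regular and $\IQ \subseteq R$, Lemma~\ref{lem:regular_passes_to_finite_crossed_product_rings}~\ref{lem:regular_passes_to_finite_crossed_product_rings:regular} implies that $R \ast D$ is regular, in particular regular Noetherian.

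Finally we apply the classical theorem of Bass that a regular Noetherian ring $S$ has vanishing negative algebraic $K$-theory, that is $K_n(S) = 0$ for $n \le -1$. Applied to $S = \calh(G//K;R,\rho,\omega) \cong R \ast D$, this gives the desired vanishing at each level of the directed system, and passing to the colimit yields $K_n(\calh(G;R,\rho,\omega)) = 0$ for $n \le -1$. The main conceptual content of the argument is the reduction from $G$ to its finite quotients $D$, which relies on combining the union decomposition of the Hecke algebra, the existence of normal compact open $K \in P$, and the crossed-product description; once this is in place, the negative $K$-theoretic vanishing follows from standard regularity arguments without further obstacles.
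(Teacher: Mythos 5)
Your argument is correct and coincides with the paper's proof: assertion (i) via Lemma~\ref{lem:calh(H)_as_union}, the cofinality of normal subgroups supplied by Lemma~\ref{lem:arranging_K_to_be_normal}, and the continuity property~\eqref{colim_K_n(cala_i)_to_k_n(cala)}; assertion (ii) via the crossed product identification of Lemma~\ref{lem:twisted_Hecke_algebra_over_K-as_crossed_product}, regularity from Lemma~\ref{lem:regular_passes_to_finite_crossed_product_rings}, and the classical vanishing of negative $K$-theory for regular rings. No gaps.
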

\begin{proof}~\ref{lem:negative_K-groups_and_K_0_of_calh(Q)_for_compact_Q:K_n_is_colimit}
  We conclude from Lemma~\ref{lem:calh(H)_as_union}  and Lemma~\ref{lem:arranging_K_to_be_normal}
     \[
   \calh(G;R,\rho,\omega) = \bigcup_{K \in \calk}  \calh(G//K;R,\rho,\omega).
   \]
   Now apply~\eqref{colim_K_n(cala_i)_to_k_n(cala)}.
   \\[1mm]~\ref{lem:negative_K-groups_and_K_0_of_calh(Q)_for_compact_Q:negative_K_groups}
   For $K \in \calk$ the unital ring 
   $\calh(G//K;R,\rho,\omega) $ is regular by
   Lemma~\ref{lem:regular_passes_to_finite_crossed_product_rings}~%
\ref{lem:regular_passes_to_finite_crossed_product_rings:regular}
   and Lemma~\ref{lem:twisted_Hecke_algebra_over_K-as_crossed_product}.  Hence 
   $ K_n\bigl(\calh(G//K;R,\rho, \omega)\bigr) = \{0\}$ for $n \le -1$, see~\cite[page~154]{Rosenberg(1994)}.
   Now apply
   assertion~\ref{lem:negative_K-groups_and_K_0_of_calh(Q)_for_compact_Q:K_n_is_colimit}.
 \end{proof}

\begin{remark}\label{erem:nested_sequence_and-colimit_Q-compact}
  Suppose that $Q$ is compact and $N$ is locally central. Because of
  Lemma~\ref{lem:arranging_K_to_be_normal} we can choose a nested sequence of elements in
  $\calk$
\[
K_0 \supseteq K_1 \supseteq K_2 \supseteq K_3 \supseteq \cdots
\]
satisfying $\bigcap_{i = 0}^{\infty} K_n = \{1\}$. Then for every $K \in \calk$ there is a
natural number $i$ with $K_i \subseteq K$.  Abbreviate
$\calh(G//K_i) = \calh(G//K_i;R,\rho,\omega)$. Then the inclusion
$\calh(G//K_{i}) \to \calh(G//K_{i+1})$ induces a split injection
$K_n(\calh(G//K_i)) \to K_n(\calh(G//K_{i+1}))$ for $i \in \IN$ and $n \in \IZ$ by
Lemma~\ref{lem:criterion_for_flatness_for_Hecke_algebras}.
Lemma~\ref{lem:negative_K-groups_and_K_0_of_calh(Q)_for_compact_Q}~%
\ref{lem:negative_K-groups_and_K_0_of_calh(Q)_for_compact_Q:K_n_is_colimit} implies that
there is an isomorphism
 \begin{multline*}
   K_n(\calh(G;R,\rho,\omega) )
   \\
   \cong
   K_n(\calh(G//K_0)) \oplus \bigoplus_{i \ge 0}  \cok\bigl(K_n(\calh(G//K_i)) \to K_n(\calh(G//K_{i+1}))\bigr)
 \end{multline*}
 and $\cok\bigl(K_n(\calh(G//K_i)) \to K_n(\calh(G//K_{i+1}))\bigr)$ is isomorphic to a direct summand of
 $K_n(\calh(G//K_{i+1}))$.

 Now suppose additionally that $R$ is semisimple. Then $\calh(G//K_i)$ is semisimple and hence
 the abelian group $K_0(\calh(G//K_i))$ is finitely generated free 
 for $i \in \IN$ by Lemma~\ref{lem:regular_passes_to_finite_crossed_product_rings}~%
\ref{lem:regular_passes_to_finite_crossed_product_rings:semi-simple} and
 Lemma~\ref{lem:twisted_Hecke_algebra_over_K-as_crossed_product}, Hence
 the abelian group $K_0(\calh(G;R,\rho,\omega))$ is free and in particular torsionfree.
\end{remark}


\typeout{---- Section 9:  On the algebraic $K$-theory of the Hecke algebra of covirtually $\IZ$ totally disconnected groups-------}

\section{On the  algebraic $K$-theory of the Hecke algebra of a covirtually $\IZ$ totally disconnected group}%
\label{sec:On_the_algebraic_K-theory_of_the_Hecke_algebra_of_covirtually_Z_totally_disconnected_group}

Consider the setup of Section~\ref{subsec:covirtually_Z_groups}. In particular $Q$ is covirtually cyclic.
Denote by $\bfT_{\bfKinfty(\phi^{-1})}$ the mapping torus
of the map 
\[
  \bfKinfty(\phi^{-1}) \colon \bfKinfty(\calh(L;R,\rho|_L,\omega))
  \to \bfKinfty(\calh(L;R,\rho|_L,\omega))
\]
of non-connective $K$-theory spectra.

\begin{theorem}[Wang sequence]\label{the:Wang_sequence}
  Suppose that the unital ring $R$ is  regular
  and satisfies $\IQ \subseteq R$. Assume that $N$ is locally central.
Then:
\begin{enumerate}

\item\label{the:Wang_sequence:weak_homotopy_equivalence}

There is a weak homotopy equivalence of non-connective spectra
\[
  \bfa^{\infty}  \colon \bfT_{\bfKinfty(\phi^{-1})}
  \xrightarrow{\simeq}  \bfKinfty(\calh(G;R,\rho,\omega));
\]

\item\label{the:Wang_sequence:Wang_sequence}
We get a long exact sequence, infinite to the left
\begin{multline*}
  \cdots \xrightarrow{K_2(i)}  K_2(\calh(G;R,\rho,\omega))
  \xrightarrow{\partial_2}   K_1(\calh(L;R,\rho|_L,\omega)) 
\\
\xrightarrow{\id - K_1(\phi^{-1})}  K_1(\calh(L;R,\rho|_L, \omega)) 
\xrightarrow{K_1(i)}  K_1(\calh(G;R,\rho, \omega)) 
\\
\xrightarrow{\partial_1}  K_0(\calh(L;R,\rho|_L,\omega)) 
\xrightarrow{\id - K_0(\phi^{-1})}   K_0(\calh(L;R,\rho|_L,\omega)) 
\\
\xrightarrow{K_0(i)}  K_0(\calh(G;R,\rho, \omega)) \to 0;
\end{multline*}

\item\label{the:Wang_sequence:negative_K-groups}
We get for $n \le 1$
\[
K_n(\calh(G;R,\rho, \omega)) = 0.
\]

\end{enumerate}
\end{theorem}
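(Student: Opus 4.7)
The plan is to combine the reduction in Lemma~\ref{lem:K-theory_of_virt_Z_reduced_to_Laurent} with the Bass-Heller-Swan style splitting from Theorem~\ref{the:The_non_connective_K-theory_of_additive_categories}. Part~(i) follows once the regularity hypothesis is verified, and parts~(ii) and~(iii) are then formal consequences of the mapping torus cofiber sequence.

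For part~(i), Lemma~\ref{lem:K-theory_of_virt_Z_reduced_to_Laurent} provides a weak equivalence
\[
\bfKinfty\bigl(\Idem(\underline{\calh(L;R,\rho|_L,\omega)}_{\oplus})_{\Idem(\underline{\phi}_{\oplus})}[t,t^{-1}]\bigr)
\xrightarrow{\simeq}
\bfKinfty\bigl(\calh(G;R,\rho,\omega)\bigr),
\]
so it suffices to apply Theorem~\ref{the:The_non_connective_K-theory_of_additive_categories} with $\cala = \Idem(\underline{\calh(L;R,\rho|_L,\omega)}_{\oplus})$ and $\Phi = \Idem(\underline{\phi}_{\oplus})$. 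The hypothesis of that theorem is that $\cala$ is regular. To verify this, I will apply Theorem~\ref{the:regular_coherence_of_the_Hecke_algebra} to the td-group $L$ in place of $G$, with the same normal subgroup $N$, the restricted action $\rho|_L$ and the same normal character $\omega$: the required quotient $L/N = M$ is compact by assumption, and $N$ is locally central in $L$ since $C_L N = C_G N \cap L$ is open in $L$ because $C_G N$ is open in $G$. Taking $m = 0$ in Theorem~\ref{the:regular_coherence_of_the_Hecke_algebra} then yields regularity of $\cala$. Composing the two weak equivalences produces the desired map $\bfa^{\infty}$.

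For part~(ii), the mapping torus $\bfT_f$ of a self-map $f \colon \bfX \to \bfX$ of spectra fits by construction into a cofiber sequence
\[
\bfX \xrightarrow{\id - f} \bfX \to \bfT_f,
\]
yielding a long exact sequence of homotopy groups that is infinite to the left. Applying this with $\bfX = \bfKinfty(\calh(L;R,\rho|_L,\omega))$ and $f = \bfKinfty(\phi^{-1})$, then substituting the isomorphism $K_n(\bfT_{\bfKinfty(\phi^{-1})}) \cong K_n(\calh(G;R,\rho,\omega))$ coming from part~(i), produces exactly the Wang sequence as stated, with boundary maps $\partial_n$ coming from the cofiber sequence and with the map $K_n(i)$ identified with the composite of $\bfX \to \bfT_f$ with $\bfa^{\infty}$.

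For part~(iii), which I read as the assertion $K_n(\calh(G;R,\rho,\omega)) = 0$ for $n \le -1$ (the theorem's $n \le 1$ appears to be a typo for $n \le -1$, in line with the abstract and introduction), I invoke Lemma~\ref{lem:negative_K-groups_and_K_0_of_calh(Q)_for_compact_Q}: since $L/N = M$ is compact and $N$ is locally central in $L$, we have $K_n(\calh(L;R,\rho|_L,\omega)) = 0$ for all $n \le -1$. For any $n \le -1$ the relevant portion of the Wang sequence becomes
\[
K_n(\calh(L;R,\rho|_L,\omega)) \to K_n(\calh(G;R,\rho,\omega)) \to K_{n-1}(\calh(L;R,\rho|_L,\omega)),
\]
with both outer groups zero, forcing $K_n(\calh(G;R,\rho,\omega)) = 0$.

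The substantive part of the argument is the verification of regularity needed in part~(i); everything else is a formal consequence of the mapping torus cofiber sequence and the input lemma on the negative $K$-theory of $\calh(L;R,\rho|_L,\omega)$. That regularity input is not routine but is exactly what Theorem~\ref{the:regular_coherence_of_the_Hecke_algebra} was designed to provide, via the crossed product description of $\calh(G//K;R,\rho,\omega)$ in Lemma~\ref{lem:twisted_Hecke_algebra_over_K-as_crossed_product} and the decomposition of Lemma~\ref{lem:decomposing_calh(G;R,rho,omega)}.
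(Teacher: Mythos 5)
Your proposal is correct and takes essentially the same route as the paper: Lemma~\ref{lem:K-theory_of_virt_Z_reduced_to_Laurent} combined with Theorem~\ref{the:The_non_connective_K-theory_of_additive_categories}, regularity of $\Idem\bigl(\underline{\calh(L;R,\rho|_L,\omega)}_{\oplus}\bigr)$ supplied by Theorem~\ref{the:regular_coherence_of_the_Hecke_algebra} (your explicit check that it applies with $L$ in place of $G$, i.e.\ $M = L/N$ compact and $N$ locally central in $L$, is exactly what the paper leaves implicit), and the negative-$K$ vanishing from Lemma~\ref{lem:negative_K-groups_and_K_0_of_calh(Q)_for_compact_Q} feeding the Wang sequence. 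Note only that the termination $K_0(\calh(G;R,\rho,\omega)) \to 0$ in part~(ii) itself uses $K_{-1}(\calh(L;R,\rho|_L,\omega)) = 0$, which you do establish in part~(iii), and your reading of part~(iii) as $n \le -1$ agrees with the abstract and introduction.
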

\begin{proof}~\ref{the:Wang_sequence:weak_homotopy_equivalence} This follows from
  Lemma~\ref{lem:K-theory_of_virt_Z_reduced_to_Laurent} and
  Theorem~\ref{the:The_non_connective_K-theory_of_additive_categories} applied to the
  additive category
  $\cala = \Idem\bigl(\underline{\calh(L;R,\rho|_L,\omega)}_{\oplus}\bigr)$ after we have shown that
  the  additive category $\Idem\bigl(\underline{\calh(L;R,\rho|_L,\omega)}_{\oplus}\bigr)$
  is regular.  This has already been done in 
  Theorem~\ref{the:regular_coherence_of_the_Hecke_algebra}.
  \\[1mm]~\ref{the:Wang_sequence:Wang_sequence}
  and~\ref{the:Wang_sequence:negative_K-groups} These follow from the Wang sequence
  associated to the left hand side of the weak homotopy equivalence appearing in
  assertion~\ref{the:Wang_sequence:weak_homotopy_equivalence} and
  Lemma~\ref{lem:negative_K-groups_and_K_0_of_calh(Q)_for_compact_Q}~%
\ref{lem:negative_K-groups_and_K_0_of_calh(Q)_for_compact_Q:negative_K_groups}.
\end{proof}


\typeout{---- Section 10:  Some input  for  the Farrell-Jones Conjecture -------}

\section{Some input for the Farrell-Jones Conjecture}%
\label{sec:Some_input_for_the_Farrell-Jones_Conjecture}

In forthcoming papers we will need for the proof and the application of the $K$-theoretic
Farrell-Jones Conjecture for the Hecke algebra of a closed subgroup of a reductive $p$-adic
group, which is our ultimate goal,
Theorem~\ref{the:regular_coherence_of_the_Hecke_algebra} and the following
Theorem~\ref{the:Q'_to_Q-faithfully_flat}.

Consider the setup of
Subsection~\ref{subsec:normal_characters}.  For the
remainder of this subsection we will assume that the td-group $Q$ is compact and $N$ is
  locally central.  Let $\overline{i} \colon Q' \to Q$ be the inclusion of a compact open
  subgroup of $Q$. Put $G' = \pr^{-1}(Q')$. Let $i \colon G' \to G$ be the inclusion. The
  construction in Subsection~\ref{subsec:Functoriality_in_Q} yields a ring homomorphism
\[
  \calh(i) \colon \calh(G';R,\rho',\omega)\to \calh(G;R,\rho,\omega)
\]
where $\rho' = \rho \circ i$,  $\mu'$ is  obtained from $\mu$ by restriction with $i$, and we take $N' = N$
and $\omega' = \omega$.  The image
$\calh(i)(s)$ of an element $s \in \calh(G';R,\rho',\omega)$, which is given by an
appropriate function $s \colon G' \to R$, is specified by the function
$\calh(i)(s) \colon G \to R$ sending $g$ to $s(g)$,  if $g \in G'$, and to $0$, if
$g \notin G'$, see
Lemma~\ref{lem:properties_of_phi_ast(s')}~\ref{lem:properties_of_phi_ast(s'):injective_phi}.

\begin{theorem}\label{the:Q'_to_Q-faithfully_flat}
  Suppose that $Q$ is compact and $N$ is locally central.
  Then the functor  of unital additive categories
\[
\Idem\bigl(\underline{\calh(i)}_{\oplus}[\IZ^m]\bigr) 
\colon  
\Idem\bigl(\underline{\calh(G';R,\rho', \omega)}_{\oplus}[\IZ^m]\bigr) 
\to 
\Idem\bigl(\underline{\calh(G;R,\rho,\omega)}_{\oplus}[\IZ^m]\bigr)
\]
is faithfully flat.
\end{theorem}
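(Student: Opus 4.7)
The plan is to filter both Hecke algebras simultaneously by subrings of the form $\calh(\cdot//K_n;R,\rho,\omega)$ for a descending sequence of compact open normal subgroups $K_n$ of $G$ contained in $G'$, to reduce at each level to an inclusion $R\ast D'_n \hookrightarrow R\ast D_n$ of crossed products of finite groups which is manifestly faithfully flat, and then to pass to the colimit using Lemma~\ref{lem:criterion_for_flatness_for_Hecke_algebras}.

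First, using Lemma~\ref{lem:arranging_K_to_be_normal} together with the openness of the kernel of $\rho$ in $G$, of the kernel of $\omega$ in $N$, and of $G'$ in $G$, one constructs a descending sequence $K_0 \supseteq K_1 \supseteq K_2 \supseteq \cdots$ of compact open normal subgroups of $G$, each lying in $P$ and contained in $G'$, with $\bigcap_n K_n = \{e\}$. Each such $K_n$ is automatically compact open and normal in $G'$ and lies in the corresponding set $P$ for $G'$. Put $D_n := Q/\pr(K_n)$ and $D'_n := Q'/\pr(K_n)$; these are finite groups with $D'_n \subseteq D_n$ a subgroup of index $[Q:Q']$. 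For each $n$ choose a set-theoretic section $\sigma_n \colon D_n \to G$ of the projection with $\sigma_n(e) = e$ and $\sigma_n(D'_n) \subseteq G'$. Then Lemma~\ref{lem:twisted_Hecke_algebra_over_K-as_crossed_product} identifies $\calh(G//K_n;R,\rho,\omega)$ with the crossed product $R \ast D_n$ and $\calh(G'//K_n;R,\rho',\omega)$ with $R \ast D'_n$. These identifications are compatible: because $\sigma_n(D'_n) \subseteq G'$, both the cocycle $w$ and the twisting $c$ for $R \ast D_n$ restrict to those for $R \ast D'_n$, so the restriction of $\calh(i)$ to the $K_n$-level is the evident inclusion $R \ast D'_n \hookrightarrow R \ast D_n$.

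Choosing coset representatives of $D'_n \backslash D_n$ containing $e$, one sees from the crossed product relations that $R \ast D_n$ is free as a left $R \ast D'_n$-module of rank $[Q:Q']$, and that the inclusion $R \ast D'_n \hookrightarrow R \ast D_n$ splits as a map of left $R \ast D'_n$-modules. These properties persist after tensoring over $\IZ$ with $\IZ[\IZ^m]$. Via the equivalence~\eqref{underline(r)_oplus_to_MODcat(R)_fgp}, the induced functor
\[
\Idem\bigl(\underline{R \ast D'_n}_{\oplus}[\IZ^m]\bigr) \to \Idem\bigl(\underline{R \ast D_n}_{\oplus}[\IZ^m]\bigr)
\]
is equivalent to extension of scalars on finitely generated projective modules along the faithfully flat inclusion $R \ast D'_n[\IZ^m] \hookrightarrow R \ast D_n[\IZ^m]$. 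Since a faithfully flat ring extension preserves and reflects exactness of module sequences, the displayed functor is faithfully flat in the sense used here.

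Finally we pass to the colimit. By Lemma~\ref{lem:calh(H)_as_union} and~\eqref{colim_K_n(cala_i)_to_k_n(cala)}, any finite diagram in either $\Idem\bigl(\underline{\calh(G';R,\rho',\omega)}_{\oplus}[\IZ^m]\bigr)$ or $\Idem\bigl(\underline{\calh(G;R,\rho,\omega)}_{\oplus}[\IZ^m]\bigr)$ lifts to the corresponding $K_n$-level for all sufficiently large $n$. Applying Lemma~\ref{lem:criterion_for_flatness_for_Hecke_algebras} to $G$ and to $G'$ separately, each vertical inclusion between $K_n$-levels induces a faithfully flat functor on Idem; hence exactness of a finite diagram at the level of the full Hecke algebra is equivalent to exactness at any sufficiently large $K_n$-level. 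Combined with the finite-level faithful flatness established above, this yields the theorem. The main obstacle is precisely this final step: one must carefully interleave the horizontal faithful flatness at each $K_n$-level with the vertical faithful flatness between levels to deduce faithful flatness at the colimit. The remaining algebraic input---freeness of a finite-group crossed product over the crossed product by a subgroup---is routine.
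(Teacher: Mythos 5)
Your proposal is correct and takes essentially the same route as the paper's proof: reduce along the directed union of the subrings $\calh(\cdot//K;R,\rho,\omega)$ (via Lemma~\ref{lem:calh(H)_as_union} and Lemma~\ref{lem:criterion_for_flatness_for_Hecke_algebras}) to the finite level, identify the finite level with an inclusion of crossed products of finite groups (Lemma~\ref{lem:twisted_Hecke_algebra_over_K-as_crossed_product}), and verify faithful flatness there. The only cosmetic differences are that the paper uses the whole directed set of normal $K\in P$ contained in $G'$ rather than a nested sequence with trivial intersection (not needed), spells out in full the two-directional lifting argument showing that the functors from a fixed level $K$ into the colimit category are faithfully flat -- exactly the step you flag as the main obstacle, handled with the retraction from Lemma~\ref{lem:criterion_for_flatness_for_Hecke_algebras} -- and checks the finite level by restricting to $R[\IZ^m]$-modules instead of invoking, as you do, faithful flatness of the free ring extension $R\ast F'[\IZ^m]\subseteq R\ast F[\IZ^m]$.
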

\begin{proof}
  Let $\calk'$ be the directed set of normal compact open subgroups of $Q$ which satisfy
  $K \subseteq Q'$, and $K \in P$, where we put
  $K \le K' \Longleftrightarrow K' \subseteq K$.  Note that for any compact open subgroup
  $L$ of $Q$ there exists $K \in \calk'$ with $K \subseteq L$ by
  Lemma~\ref{lem:arranging_K_to_be_normal}.

In the sequel we abbreviate
\begin{eqnarray*}
  \calh(G)
  & := &
 \calh(G;R,\rho, \omega);
  \\
  \calh(G//K)
  & := &
 \calh(G//K;R,\rho, \omega),
\end{eqnarray*}
and analogously for $G'$. Next we want to show that the functor
\[
\Idem\bigl(\underline{j_K}_{\oplus}[\IZ^m]\bigr) 
\colon  
\Idem\bigl(\underline{\calh(G//K)}_{\oplus}[\IZ^m]\bigr) 
\to 
\Idem\bigl(\underline{\calh(G)}_{\oplus}[\IZ^m]\bigr)
\]
is faithfully flat for $K \in \calk'$, where $i_K \colon \calh(G//K) \to \calh(G)$ is the
inclusion.  Consider morphisms $f_0 \colon A_0 \to A_1$ and $f_1 \colon A_1 \to A_2$ in
$\Idem\bigl(\underline{\calh(G//K)}_{\oplus}[\IZ^m]\bigr)$ with $f_1 \circ f_0 = 0$.
Note that we can consider them also
as morphisms in $\Idem\bigl(\underline{\calh(G)}_{\oplus}[\IZ^m]\bigr)$. We have to show
that it is exact in $\Idem\bigl(\underline{\calh(G//K)}_{\oplus}[\IZ^m]\bigr)$, if and only
if it is exact in $\Idem\bigl(\underline{\calh(G)}_{\oplus}[\IZ^m]\bigr)$.

Suppose that $A_0 \xrightarrow{f_0} A_1 \xrightarrow{f_2} A_2$ is exact in
$\Idem\bigl(\underline{\calh(G//K)}_{\oplus}[\IZ^m]\bigr)$.  In order to show that it is
exact in $\Idem\bigl(\underline{\calh(G)}_{\oplus}[\IZ^m]\bigr)$, we have to find for any
object $A$ and any morphism $g \colon Q \to A_1$ in
$\Idem\bigl(\underline{\calh(G)}_{\oplus}[\IZ^m]\bigr)$ with $f_1 \circ g = 0$ a morphism
$\overline{g}\colon A \to A_0$ in $\Idem\bigl(\underline{\calh(G)}_{\oplus}[\IZ^m]\bigr)$
with $f_0 \circ \overline{g} = g$. We can choose an element $K' \in \calk'$ with $K \le K'$
such that $A$ and $g$ live already in
$\Idem\bigl(\underline{\calh(G//K')}_{\oplus}[\IZ^m]\bigr)$ by
Lemma~\ref{lem:calh(H)_as_union}. Since the inclusion
\[\Idem\bigl(\underline{\calh(G//K)}_{\oplus}[\IZ^m]\bigr) \to
\Idem\bigl(\underline{\calh(G//K')}_{\oplus}[\IZ^m]\bigr)
\]
is faithfully flat by Lemma~\ref{lem:criterion_for_flatness_for_Hecke_algebras}, we can
find $\overline{g} \colon A \to P_0$ with $f_0 \circ \overline{g} = g$ in
$\Idem\bigl(\underline{\calh(G//K')}_{\oplus}[\IZ^m]\bigr)$ and hence also in
$\Idem\bigl(\underline{\calh(G)}_{\oplus}[\IZ^m]\bigr)$.

Suppose that $A_0 \xrightarrow{f_0} A_1 \xrightarrow{f_2} A_2$ is exact in
$\Idem\bigl(\underline{\calh(G)}_{\oplus}[\IZ^m]\bigr)$.  In order to show that it is
exact in $\Idem\bigl(\underline{\calh(G//K)}_{\oplus}[\IZ^m]\bigr)$ we have to find for
any object $A$ and any morphism $g \colon Q \to A_1$ in
$\Idem\bigl(\underline{\calh(G//K)}_{\oplus}[\IZ^m]\bigr)$ with $f_1 \circ g = 0$ a
morphism $\overline{g}\colon A \to A_0$ in
$\Idem\bigl(\underline{\calh(G//K)}_{\oplus}[\IZ^m]\bigr)$ with
$f_0 \circ \overline{g} = g$.  At any rate we can find such $\overline{g} \colon A \to A_1$ in
$\Idem\bigl(\underline{\calh(G)}_{\oplus}[\IZ^m]\bigr)$. We conclude from
Lemma~\ref{lem:calh(H)_as_union} that there exists $K' \in \calk'$ with $K \le K'$ such that
$\overline{g} \colon A \to A_1$ lies already in $\Idem\bigl(\underline{\calh(G//K')}_{\oplus}[\IZ^m]\bigr)$.
Recall from Lemma~\ref{lem:criterion_for_flatness_for_Hecke_algebras}
that there is a retraction  of the inclusion
\[\Idem\bigl(\underline{\calh(G//K)}_{\oplus}[\IZ^m]\bigr) \to
\Idem\bigl(\underline{\calh(G//K')}_{\oplus}[\IZ^m]\bigr)
\]
If we apply it to $\overline{g}$, we get a morphism
$\overline{g'} \colon A \to A_1$ in $\Idem\bigl(\underline{\calh(G//K)}_{\oplus}[\IZ^m]\bigr)$
satisfying $f_1 \circ \overline{g}' = g$ in $\Idem\bigl(\underline{\calh(G//K)}_{\oplus}[\IZ^m]\bigr)$.
This finishes the proof that  functor $\Idem\bigl(\underline{j_K}_{\oplus}[\IZ^m]\bigr)$
is faithfully flat. Analogously one shows that the functor
$
\Idem\bigl(\underline{j'_K}_{\oplus}[\IZ^m]\bigr) 
\colon  
\Idem\bigl(\underline{\calh(G'//K)}_{\oplus}[\IZ^m]\bigr) 
\to 
\Idem\bigl(\underline{\calh(G')}_{\oplus}[\IZ^m]\bigr)
$
is faithfully flat for the inclusion $j_K' \colon \calh(G'//K) \to \calh(G')$.

We have  the following commutative diagram of functors of additive categories
\[
\xymatrix@!C=17em{\Idem\bigl(\underline{\calh(G')}_{\oplus}[\IZ^m]\bigr)
\ar[r]^-{\Idem(\underline{\calh(i)}_{\oplus}[\IZ^m])}
&
\Idem\bigl(\underline{\calh(G)}_{\oplus}[\IZ^m]\bigr) 
\\
\Idem\bigl(\underline{\calh(G'//K)}_{\oplus}[\IZ^m]\bigr)
\ar[r]_-{\Idem(\underline{\calh(i//K)}_{\oplus}[\IZ^m])}
\ar[u]^{\Idem(\underline{j'_K}_{\oplus}[\IZ^m])}
&
\Idem\bigl(\underline{\calh(G//K)}_{\oplus}[\IZ^m]\bigr)
\ar[u]_{\Idem(\underline{j'_K}_{\oplus}[\IZ^m])}
}
\]
whose two left vertical arrows are faithfully flat. We conclude from
Lemma~\ref{lem:calh(H)_as_union} that it suffices to show
that the lower vertical arrow in the diagram above is faithfully flat.

We have identified $\calh(G//K)$ and $\calh(G'//K)$ respectively
as a crossed product ring $R \ast F$ and $R\ast F'$ respectively  for the finite group $F = G/K$
and $F' = G'/K$ respectively in Lemma~\ref{lem:twisted_Hecke_algebra_over_K-as_crossed_product}.
Moreover the inclusion $\calh(G//K)[\IZ^m] \to \calh(G'//K)[\IZ^m]$ corresponds under these identifications
to the inclusions $R \ast F[\IZ^m] \to R\ast F'[\IZ^m]$ coming from the inclusion of finite groups $F' \to F$.
The lower horizontal arrow $\Idem(\underline{\calh(i//K)}_{\oplus}[\IZ^m])$
becomes under the equivalences of categories
of~\eqref{underline(r)_oplus_to_MODcat(R)_fgp} and~\eqref{Idem_and_Z_upper_d_and_rings}
the functor
\[
  F \colon \MODcat{R \ast F'[\IZ^m]}_{\fgp} \to \MODcat{R \ast F[\IZ^m]}_{\fgp},
  \quad P \mapsto R \ast F[\IZ^m] \otimes_{R \ast F'[\IZ^m]} P.
\]
There is a commutative diagram  
\[\xymatrix{\MODcat{R \ast F'[\IZ^m]}_{\fgp} \ar[r]^-{F} \ar[d]
  &
  \MODcat{R \ast F[\IZ^m]}_{\fgp} \ar[d]
  \\
  \MODcat{R[\IZ^m]}_{\fgp} \ar[r] 
  &
  \MODcat{R[\IZ^m]}_{\fgp}
  }
\]
whose vertical arrows are given by restriction from $R \ast F[\IZ^m]$ or $R\ast F'[\IZ^m]$  to $R[\IZ^m]$
and whose lower vertical arrow is given by $P \mapsto \bigoplus_{i = 1}^{[F : F']} P$.
Since the vertical arrows and the lower horizontal arrow are obviously faithfully flat,
the upper vertical arrow is faithfully flat. This finishes the proof of
Lemma~\ref{the:Q'_to_Q-faithfully_flat}.
\end{proof}


\typeout{---- Section 11:  Characteristic $p$ -------}

\section{Characteristic $p$}\label{sec:Characteristic_p}

We have assumed $\IQ \subseteq R$, or in other words that any natural number $n \ge 1$ is
invertible in $R$.  One may wonder what happens, if one drops this condition, for
instance, if  $R$ is a field of prime characteristic.  The following  condition appearing
in~\cite[page~9]{Blondel(2011)} suffices to make sense  of the Hecke algebra.

\begin{condition}\label{con:characteristcic_p} There exists a compact open subgroup
  $K$ in $Q$ such that the index $[K:K_0]$ of any open subgroup $K_0$ of $K$ is invertible in
  $R$. 
\end{condition}

Let $Q$ be a reductive $p$-adic group. Then Condition~\ref{con:characteristcic_p} is
satisfied, if $p$ is invertible in $R$, see~\cite[page~9]{Blondel(2011)}.

However, this does not mean that assertion of the Farell-Jones Conjecture 
or Theorem~\ref{the:Wang_sequence} remains true integrally.
Our arguments would go though  if for every  compact open subgroup $K$ in $Q$
the index $[K:K_0]$ of any open subgroup $K_0$ of $K$ is invertible in $R$
which is stronger than Condition~\ref{con:characteristcic_p}.

One may hope that under under Condition~\ref{con:characteristcic_p} the Farrell-Jones
Conjecture or Theorem~\ref{the:Wang_sequence} remain true rationally.  Let us confine
ourselves to the setup of Section~\ref{subsec:covirtually_Z_groups} and
Theorem~\ref{the:Wang_sequence}.  Then we get from~\cite[Theorem~0.1]{Lueck-Steimle(2016BHS)}
a weak homotopy equivalence, where we
abbreviate $\calh(G) :=\calh(G;R,\rho,\omega)$ and analogously for $L$
\begin{multline*}
  \bfT_{\bfKinfty(\Idem(\phi)^{-1})} \vee
  \bfNKinfty(\Idem(\underline{\calh(L)}_{\oplus})_{\Idem(\underline{\phi}_{\oplus})}[t])
  \vee
  \bfNKinfty(\Idem(\underline{\calh(L)}_{\oplus})_{\Idem(\underline{\phi}_{\oplus})}[t^{-1}])
  \\
  \xrightarrow{\simeq} \bfKinfty(\calh(G;R,\rho,\omega)).
\end{multline*}
So we need to show that the homotopy groups of the Nil-terms all vanish rationally.  If
$L$ is finite, this is known to be true, see~\cite[Theorem~0.3 and
Theorem~9.4]{Lueck-Steimle(2016splitasmb)}.  Under the strong condition that there is a
sequence $L \supseteq L_1 \supseteq L_2 \supseteq L_2 \cdots$ of in $L$ normal compact open
subgroups such that $\bigcap_{i \ge 0} L_i = \{1\}$ and $\phi(L_i) = L_i$ holds for
$i \ge 0$, this implies that the homotopy groups of the Nil-terms all vanish
rationally. Without this strong condition we do not have a proof.


\typeout{----------------------------- References ------------------------------}

\addcontentsline{toc<<}{section}{References} 

\begin{thebibliography}{10}

\bibitem{Bartels-Lueck(2009coeff)}
A.~Bartels and W.~L{\"u}ck.
\newblock On crossed product rings with twisted involutions, their module
  categories and {$L$}-theory.
\newblock In {\em Cohomology of groups and algebraic {$K$}-theory}, volume~12
  of {\em Adv. Lect. Math. (ALM)}, pages 1--54. Int. Press, Somerville, MA,
  2010.

\bibitem{Bartels-Lueck(2020additive)}
A.~Bartels and W.~L{\"u}ck.
\newblock Vanishing of {N}il-terms and negative {$K$}-theory for additive
  categories.
\newblock Preprint, arXiv:2002.03412 [math.KT], 2020.

\bibitem{Bartels-Lueck(2020foundations)}
A.~Bartels and W.~L{\"u}ck.
\newblock Foundations for the {$K$}-theoretic {F}arrell-{J}ones {C}onjecture
  for {H}ecke algebras of totally disconnected groups.
\newblock in preparation, 2022.

\bibitem{Bartels-Reich(2007coeff)}
A.~Bartels and H.~Reich.
\newblock Coefficients for the {F}arrell-{J}ones {C}onjecture.
\newblock {\em Adv. Math.}, 209(1):337--362, 2007.

\bibitem{Bernstein(1992)}
J.~Bernstein.
\newblock Draft of: Representations of $p$-adic groups.
\newblock
  http//www.math.tau.ac.il/~bernstei/Unpublished\_texts/Unpublished\_list.html,
  1992.

\bibitem{Blondel(2011)}
C.~Blondel.
\newblock Basic representation theory of reductive $p$-adic groups.
\newblock unpublished notes,
  https://webusers.imj-prg.fr/\~corinne.blondel/Blondel\_Beijin.pdf, 2011.

\bibitem{Dat(2000)}
J.-F. Dat.
\newblock On the {$K_0$} of a {$p$}-adic group.
\newblock {\em Invent. Math.}, 140(1):171--226, 2000.

\bibitem{Dat(2003)}
J.-F. Dat.
\newblock Quelques propri\'et\'es des idempotents centraux des groupes
  {$p$}-adiques.
\newblock {\em J. Reine Angew. Math.}, 554:69--103, 2003.

\bibitem{Farrell-Hsiang(1970)}
F.~T. Farrell and W.-C. Hsiang.
\newblock A formula for ${K}\sb{1}{R}\sb{\alpha }\,[{T}]$.
\newblock In {\em Applications of Categorical Algebra (Proc. Sympos. Pure
  Math., Vol. XVII, New York, 1968)}, pages 192--218. Amer. Math. Soc.,
  Providence, R.I., 1970.

\bibitem{Lueck-Steimle(2014delooping)}
W.~L{\"u}ck and W.~Steimle.
\newblock Non-connective {$K$}- and {N}il-spectra of additive categories.
\newblock In {\em An alpine expedition through algebraic topology}, volume 617
  of {\em Contemp. Math.}, pages 205--236. Amer. Math. Soc., Providence, RI,
  2014.

\bibitem{Lueck-Steimle(2016splitasmb)}
W.~L{\"u}ck and W.~Steimle.
\newblock Splitting the relative assembly map, {N}il-terms and involutions.
\newblock {\em Ann. K-Theory}, 1(4):339--377, 2016.

\bibitem{Lueck-Steimle(2016BHS)}
W.~L{\"u}ck and W.~Steimle.
\newblock A twisted {B}ass--{H}eller--{S}wan decomposition for the algebraic
  {$K$}-theory of additive categories.
\newblock {\em Forum Math.}, 28(1):129--174, 2016.

\bibitem{Pedersen-Weibel(1985)}
E.~K. Pedersen and C.~A. Weibel.
\newblock A non-connective delooping of algebraic ${K}$-theory.
\newblock In {\em Algebraic and Geometric Topology; proc. conf. Rutgers Uni.,
  New Brunswick 1983}, volume 1126 of {\em Lecture Notes in Mathematics}, pages
  166--181. Springer, 1985.

\bibitem{Pimsner-Voiculescu(1982)}
M.~Pimsner and D.~Voiculescu.
\newblock ${K}$-groups of reduced crossed products by free groups.
\newblock {\em J. Operator Theory}, 8(1):131--156, 1982.

\bibitem{Rosenberg(1994)}
J.~Rosenberg.
\newblock {\em Algebraic ${K}$-theory and its applications}.
\newblock Springer-Verlag, New York, 1994.

\bibitem{Schneider-Stuhler(1991)}
P.~Schneider and U.~Stuhler.
\newblock The cohomology of $p$-adic symmetric spaces.
\newblock {\em Invent. Math.}, 105(1):47--122, 1991.

\bibitem{Schneider-Stuhler(1993)}
P.~Schneider and U.~Stuhler.
\newblock Resolutions for smooth representations of the general linear group
  over a local field.
\newblock {\em J. Reine Angew. Math.}, 436:19--32, 1993.

\bibitem{Schneider-Stuhler(1997)}
P.~Schneider and U.~Stuhler.
\newblock Representation theory and sheaves on the {B}ruhat-{T}its building.
\newblock {\em Inst. Hautes \'Etudes Sci. Publ. Math.}, 85:97--191, 1997.

\bibitem{Vigneras(1990)}
M.-F. Vign{\'e}ras.
\newblock On formal dimensions for reductive $p$-adic groups.
\newblock In {\em Festschrift in honor of I. I. Piatetski-Shapiro on the
  occasion of his sixtieth birthday, Part I (Ramat Aviv, 1989)}, pages
  225--266. Weizmann, Jerusalem, 1990.

\end{thebibliography}


\end{document}